\mathchardef\mhyphen="2D
\providecommand*\showkeyslabelformat[1]
\theoremstyle{definition}
\newtheorem{theorem}{Theorem}
\newtheorem{definition}[theorem]{Definition}
\newtheorem{definitions}[theorem]{Definitions}
\newtheorem{lemma}[theorem]{Lemma}
\newtheorem{proposition}[theorem]{Proposition}
\newtheorem{examples}[theorem]{Examples}
\newtheorem{example}[theorem]{Example}
\newtheorem{remark}[theorem]{Remark}
\newtheorem{notation}[theorem]{Notation}
\newtheorem*{unpackdef}{Unpacked Definition}
\newtheorem*{compositionc}{The composition condition for $\omega$-categories}
\newtheorem*{coherencec}{The coherence condition for $\omega$-categories}
\newtheorem*{coherencen}{The coherence condition for $n$-categories}
\newtheorem*{coher+comp}{The $n$-cell composition coherence condition for $n$-categories}
\newtheorem{coherencethm}[theorem]{The Coherence Theorem}
\numberwithin{theorem}{section}
\def\bs{\ensuremath\boldsymbol}
\title{Presentations for globular operads}
\author{Rhiannon Griffiths}
\begin{document}

\begin{abstract} In this paper we develop the theory of presentations for globular operads and construct presentations for the globular operads corresponding to several key theories of $n$-category for $n \leqslant 4$.
\end{abstract}

\maketitle

\tableofcontents

\section{Introduction}

Operads are tools that have been used to describe a wide variety of algebraic structures. They first arose as a way to understand operations on $k$-fold loop spaces in homotopy theory \cite{PM}, but the idea has since been adapted and applied across many areas of mathematics; see, for example, \cite{VH}, \cite{MSS}, or \cite{BV}. Each type of operad describes a class of algebraic theories, and a specified operad of said type concisely encodes one such theory into a single object.

Globular operads are a kind of operad whose algebras share a strong formal similarity with higher categories. This approach to higher categories has been worked on extensively by Michael Batanin and by Tom Leinster, who defined fully weak $n$-categories as algebras for a specified globular operad \cite{MB}, \cite{TL}. Yet the current literature offers no way to find the globular operad corresponding to a given notion of higher category, nor a proof that such an operad must exist.

In this paper we define presentations for globular operads and demonstrate how these presentations provide a way to explicitly construct the globular operad corresponding to any algebraic notion of higher category. The underlying idea is the same as in presentations for simpler objects like groups or rings; we describe an algebraic structure, in this case a globular operad, by specifying a set of generators and a set of relations between them. In particular, we show how to construct a presentation for a globular operad in such a way that generators correspond to the kinds of composition operations and coherence cells present in the associated higher categories, and the relations correspond to the axioms.

Additionally, we show that a presentation can be built in such a way that the coherence theorem for the associated higher categories is satisfied automatically; see Section \ref{contractibility section} for a discussion of the coherence theorem. The highest dimension for which there exists a hands-on definition of a fully weak algebraic $n$-category together with a proven coherence theorem is $n=3$; these are the tricategories of Nick Gurski \cite{NG}. In the final section of this paper, we give a presentation for the globular operad for fully weak 4-categories satisfying the coherence theorem.

While this method of using operads to build concrete models for algebraic higher categories has many potential applications, this work is done with a specific application in mind. In a preprint of Michael Batanin \cite{batanin}, he conjectures that it should possible to take `slices' of globular operads. The $k^{th}$ slice was said to be the symmetric operad obtained by considering only the $k$-dimensional data. Thus, given a globular operad equivalent to some notion of higher category, the slices would isolate the algebraic structure of those higher categories in each dimension.

However, due to the gaps in knowledge surrounding globular operads at the time, it was not possible to formulate a definition of slices. As a first application of presentations, we will show in the follow up paper \cite{RG} that given a presentation $P$ for a globular operad $\bs{G}$, there exists a symmetric operad determined by the $k$-dimensional data of $P$; this symmetric operad is the $k^{th}$ slice of $\bs{G}$. Following this, we use slices to formally construct the string and surface diagrams arising from several key theories of higher category and show that, up to isomorphism, the slices do not depend on the choice of presentation.

Batanin also hypothesised that slices could tell us when one theory of higher category is equivalent to another. This is significant because fully weak higher categories are often the most useful for applications to areas such as algebraic topology and homotopy theory, but become too complex for practical use in dimensions greater than 2. A solution is to find a notion of semi-strict higher category that is just weak enough to be equivalent to the fully weak variety, while still being tractable enough to work with directly. In Section \ref{examples section} of this paper, we construct presentations for the globular operads for two different theories of semi-strict 4-category. The first of these are $4$-categories with weak units in low dimensions, and the second are $4$-categories with weak interchange laws. In \cite{RG} we show that both are equivalent to fully weak $4$-categories by studying the geometric properties and graphical calculi of the associated surface diagrams. It is likely that using the language of presentations, these results can be generalised to dimensions greater than 4.

\subsection{Organisation of this paper} In Section \ref{free higher cat monad section} we describe the free strict higher category monads used to define globular operads, which are covered in detail in Section \ref{globop section}. In Section \ref{algebras section} we define the category of algebras for a globular operad, and highlight the similarites between these algebras and higher categories. In Section \ref{contractibility section} we discuss contractiblity, and show that contractible globular operads are precisely those operads whose algebras satisfy the required properties of composition and coherence for higher categories. Presentations are defined in Section \ref{presenations section}, followed by an example of a presentation for the globular operad for strict 2-categories. In Section \ref{weak unbiased section} we construct presentations for the globular operads for weak unbiased higher categories, which are used in Section \ref{glob ops for higher cats section} to identify precisely when a globular operad is equivalent to some theory of higher category. The final section is devoted to constructing presentations for the globular operads for several key theories of $n$-category for $n \leqslant 4$; Section \ref{weaksection} covers fully weak 4-categories, and Section \ref{semi-strict section} covers $n$-categories with weak units in low dimensions and $n$-categories with weak interchange laws, respectively.

\subsection*{Acknowledgements} I would like to thank my advisor Nick Gurski for his exceptional guidance and support throughout the writing of this paper. I am also grateful to Michael Batanin and Richard Garner for conversations related to this project. Finally, the material in this paper also appears in my PhD thesis \cite{RG1}.

\section{The free strict higher category monads}\label{free higher cat monad section}

This paper concerns algebraic\footnote{By an algebraic notion of higher category we mean a notion of higher category for which composition and its associated coherence are given by specified operations satisfying equational axioms.} notions of higher category for which the underlying graph data is a globular set (in the case of $\omega$-categories), or an $n$-globular set (in the case of $n$-categories). There are other definitions of higher category for which the underlying data is given by a more complex structure, opetopic and simplicial definitions for example, but we will not study these here. We begin by defining strict higher categories in terms of monads on categories of globular sets.

\begin{definition} A \textit{globular set} $\bs{G}$ is a diagram

\vspace{1.5mm}

\begin{center}
\begin{tikzpicture}[node distance=2cm, auto]

\node (A) {$G_0$};
\node (B) [left of=A] {$G_1$};
\node (C) [left of=B] {$G_2$};
\node (D) [left of=C] {$...$};

\draw[transform canvas={yshift=0.5ex},->] (B) to node {$s$} (A);
\draw[transform canvas={yshift=-0.5ex},->] (B) to node [swap] {$t$} (A);

\draw[transform canvas={yshift=0.5ex},->] (C) to node {$s$} (B);
\draw[transform canvas={yshift=-0.5ex},->] (C) to node [swap] {$t$} (B);

\draw[transform canvas={yshift=0.5ex},->] (D) to node {$s$} (C);
\draw[transform canvas={yshift=-0.5ex},->] (D) to node [swap] {$t$} (C);

\end{tikzpicture}
\end{center}
in $\mathbf{Set}$ satisfying $ss=st$ and $ts=tt:G_{n+2} \longrightarrow G_n$ for all $n \in \mathbb{N}$.
\end{definition}

We refer to the elements of $G_n$ as the $n$-cells of $\bs{G}$, and to $s$ and $t$ as the source and target maps, respectively. An $n$-cell of a globular set may be represented diagrammatically using its source and target $k$-cells for all $k<n$. For example, a 2-cell $\chi$ with $s(\chi) = x$, $t(\chi) = x'$, $ss(\chi) = st(\chi) =X$ and $ts(\chi) = tt(\chi) = Y$ is represented

\begin{center}
\begin{tikzpicture}[node distance=2.5cm, auto]

\node (A) {$X$};
\node (B) [right of=A] {$Y$.};
\node (X) [node distance=1.25cm, right of=A] {$\Downarrow \chi$};

\draw[->, bend left=40] (A) to node {$x$} (B);
\draw[->, bend right=40] (A) to node [swap] {$x'$} (B);

\end{tikzpicture}
\end{center}

\begin{definition}
A \textit{morphism $f : \bs{G} \longrightarrow \bs{H}$ of globular sets} is a collection $\{f_n : G_n \longrightarrow H_n\}_{n \in \mathbb{N}}$ of functions preserving the sources and targets.
\end{definition}

\begin{remark}\label{GSet is presheaf} Note that the category $\mathbf{GSet}$ of globular sets is the presheaf category $[\mathbb{G}^{\textrm{op}}, \mathbf{Set}]$, where $\mathbb{G}$ is generated by the graph

\begin{center}
\begin{tikzpicture}[node distance=2cm, auto]

\node (A) {$...$};
\node (B) [left of=A] {$2$};
\node (C) [left of=B] {$1$};
\node (D) [left of=C] {$0$};

\draw[transform canvas={yshift=0.5ex},->] (B) to node {$s$} (A);
\draw[transform canvas={yshift=-0.5ex},->] (B) to node [swap] {$t$} (A);

\draw[transform canvas={yshift=0.5ex},->] (C) to node {$s$} (B);
\draw[transform canvas={yshift=-0.5ex},->] (C) to node [swap] {$t$} (B);

\draw[transform canvas={yshift=0.5ex},->] (D) to node {$s$} (C);
\draw[transform canvas={yshift=-0.5ex},->] (D) to node [swap] {$t$} (C);

\end{tikzpicture}
\end{center}
subject to the equations $ss=ts$ and $st=tt$.
\end{remark}

\begin{definition} The \textit{free strict $\omega$-category functor} $(-)^*:\mathbf{GSet} \longrightarrow \mathbf{Str} \ \omega \mhyphen \mathbf{Cat}$ is the left adjoint to the canonical forgetful functor $U{:} \ \mathbf{Str} \ \omega \mhyphen \mathbf{Cat} \longrightarrow \mathbf{GSet}$ from the category of strict $\omega$-categories; see \cite[Appendix F]{TL}.
\end{definition}

The free strict $\omega$-category $\bs{G}^*$ on a globular set $\bs{G}$ is the $\omega$-category whose $n$-cells are $n$-pasting diagrams in $\bs{G}$ and whose composition is concatenation along matching boundary cells. For instance, the 2-pasting diagrams

\begin{center}
\begin{tikzpicture}[node distance=2.5cm, auto]

\node (A) {$X$};
\node (B) [right of=A] {$Y$};
\node (C) [right of=B] {$Z$};
\node (D) [right of=C] {$U$};

\draw[->, bend left=80] (A) to node {$x$} (B);
\draw[->, bend left=25] (A) to node {} (B);
\draw[->, bend right=25] (A) to node [swap] {} (B);
\draw[->, bend right=80] (A) to node [swap] {$x'''$} (B);

\node (Y) [node distance=1.25cm, right of=A] {$\Downarrow  \chi' $};
\node (X) [node distance=0.63cm, above of=Y] {$\Downarrow  \chi $};
\node (Z) [node distance=0.63cm, below of=Y] {$\Downarrow  \chi'' $};

\draw[->] (B) to node {$y$} (C);

\draw[->, bend left=60] (C) to node {$z$} (D);
\draw[->] (C) to node {} (D);
\draw[->, bend right=60] (C) to node [swap] {$z''$} (D);

\node (W) [node distance=1.25cm, right of=C] {};
\node (U) [node distance=0.4cm, above of=W] {$\Downarrow  \zeta $};
\node (V) [node distance=0.4cm, below of=W] {$\Downarrow  \zeta' $};

\end{tikzpicture}

\begin{tikzpicture}[node distance=2.5cm, auto]

\node (A) {$X$};
\node (B) [right of=A] {$Y$};
\node (C) [right of=B] {$Z$};
\node (D) [right of=C] {$U$};

\draw[->] (A) to node {$x'''$} (B);

\draw[->, bend left=60] (B) to node {$y$} (C);
\draw[->] (B) to node {} (C);
\draw[->, bend right=60] (B) to node [swap] {$y''$} (C);

\node (W) [node distance=1.25cm, right of=B] {};
\node (X) [node distance=0.4cm, above of=W] {$\Downarrow  \psi$};
\node (Y) [node distance=0.4cm, below of=W] {$\Downarrow  \psi'$};

\draw[->, bend left=40] (C) to node {$z''$} (D);
\draw[->, bend right=40] (C) to node [swap] {$z'''$} (D);

\node (Z) [node distance=1.3cm, right of=C] {$\Downarrow  \zeta''$};

\end{tikzpicture}
\end{center}
in $\bs{G}$ are both 2-cells of $\bs{G}^*$ whose vertical composite is the 2-cell below.

\vspace{1mm}

\begin{center}
\begin{tikzpicture}[node distance=2.5cm, auto]

\node (A) {$X$};
\node (B) [right of=A] {$Y$};
\node (C) [right of=B] {$Z$};
\node (D) [right of=C] {$U$};

\draw[->, bend left=80] (A) to node {$x$} (B);
\draw[->, bend left=25] (A) to node {} (B);
\draw[->, bend right=25] (A) to node [swap] {} (B);
\draw[->, bend right=80] (A) to node [swap] {$x'''$} (B);

\node (Y) [node distance=1.25cm, right of=A] {$\Downarrow  \chi'$};
\node (X) [node distance=0.63cm, above of=Y] {$\Downarrow  \chi$};
\node (Z) [node distance=0.63cm, below of=Y] {$\Downarrow  \chi''$};

\draw[->, bend left=60] (B) to node {$y$} (C);
\draw[->] (B) to node {} (C);
\draw[->, bend right=60] (B) to node [swap] {$y''$} (C);

\node (W) [node distance=1.25cm, right of=B] {};
\node (U) [node distance=0.4cm, above of=W] {$\Downarrow  \psi$};
\node (V) [node distance=0.4cm, below of=W] {$\Downarrow  \psi'$};

\draw[->, bend left=80] (C) to node {$z$} (D);
\draw[->, bend left=25] (C) to node {} (D);
\draw[->, bend right=25] (C) to node [swap] {} (D);
\draw[->, bend right=80] (C) to node [swap] {$z'''$} (D);

\node (M) [node distance=1.25cm, right of=C] {$\Downarrow  \zeta'$};
\node (L) [node distance=0.63cm, above of=M] {$\Downarrow  \zeta$};
\node (N) [node distance=0.63cm, below of=M] {$\Downarrow  \zeta''$};

\end{tikzpicture}
\end{center}
Here it is understood that $\chi, \chi', \chi'', \psi, \psi', \zeta, \zeta'$ and $\zeta''$ are all 2-cells of $\bs{G}$.

\begin{definition} \label{degenerate pd}
A \textit{degenerate} $n$-pasting diagram in a globular set $\bs{G}$ is an $n$-pasting diagram which does not contain any $n$-cells of $\bs{G}$.
\end{definition}

\begin{examples}
The following 2-pasting diagrams are both degenerate.

\vspace{1mm}

\begin{center}
\begin{tikzpicture}[node distance=2cm, auto]

\node (A) {$X$};
\node (B) [right of=A] {$Y$};
\node (C) [right of=B] {$Z$};
\node (D) [right of=C] {$U$};
\node (X) [node distance=2cm, left of=A] {$X$};

\draw[->] (A) to node {$x$} (B);
\draw[->] (B) to node {$y$} (C);
\draw[->] (C) to node {$z$} (D);

\end{tikzpicture}
\end{center}
\end{examples}

\begin{remark}
Each $k$-pasting diagram in a globular set $\bs{G}$ may be viewed as a degenerate $n$-pasting diagram for any $n > k$. However, the sets $G^*_k$ and $G^*_n$ of $k$-cells of $\bs{G}^*$ and $n$-cells of $\bs{G}^*$, respectively, are disjoint. In other words, a degenerate $n$-pasting diagram is distinct from its corresponding ($n-1)$-pasting diagram. Note that the degenerate $n$-pasting diagrams in $\bs{G}$ are the identity $n$-cells of $\bs{G}^*$.
\end{remark}

\begin{definition} Let $n$ be natural number. The $n$-\textit{ball} $\bs{B_n}$ is the globular set

\vspace{1mm}

\begin{center}
\begin{tikzpicture}[node distance=2.3cm, auto]

\node (A) {$\{0,1\}$};
\node (C) [left of=A] {$. . .$};
\node (D) [left of=C] {$\{0,1\}$};
\node (E) [left of=D] {$ \{ \filledstar \}$};
\node (F) [node distance=2.1cm, left of=E] {$\emptyset$};
\node (G) [node distance=2cm, left of=F] {$\emptyset$};
\node (H) [node distance=2cm, left of=G] {$. . .$};

\draw[transform canvas={yshift=0.5ex},->] (C) to node {\small ${0}$} (A);
\draw[transform canvas={yshift=-0.5ex},->] (C) to node [swap] {\small ${1}$} (A);

\draw[transform canvas={yshift=0.5ex},->] (D) to node {\small ${0}$} (C);
\draw[transform canvas={yshift=-0.5ex},->] (D) to node [swap] {\small ${1}$} (C);

\draw[transform canvas={yshift=0.5ex},->] (E) to node {\small ${0}$} (D);
\draw[transform canvas={yshift=-0.5ex},->] (E) to node [swap] {\small ${1}$} (D);

\draw[transform canvas={yshift=0.5ex},->] (F) to node {} (E);
\draw[transform canvas={yshift=-0.5ex},->] (F) to node [swap] {} (E);

\draw[transform canvas={yshift=0.5ex},->] (G) to node {} (F);
\draw[transform canvas={yshift=-0.5ex},->] (G) to node [swap] {} (F);

\draw[transform canvas={yshift=0.5ex},->] (H) to node {} (G);
\draw[transform canvas={yshift=-0.5ex},->] (H) to node [swap] {} (G);

\end{tikzpicture}
\end{center}
consisting of a single $n$-cell. Here the arrows labelled $0$ and $1$ represent the constant functions.
\end{definition}

\begin{definition} \label{trivlal pd}
A \textit{simple} $n$-pasting diagram in a globular set $\bs{G}$ is the image of a morphism $\bs{B_n} \longrightarrow \bs{G}$ of globular sets. 
\end{definition}

\begin{examples}
Every 0-pasting diagram is simple, and the pasting diagrams below are simple 1-, 2- and 3-pasting diagrams, respectively.
\begin{center}
\begin{tikzpicture}[node distance=2.5cm, auto]

\node (A) {$X$};
\node (B) [node distance=2cm, right of=A] {$Y$};

\draw[->] (A) to node {$x$} (B);

\node (A') [node distance=2cm, right of=B] {$X$};
\node (B') [right of=A'] {$Y$};
\node (X') [node distance=1.25cm, right of=A'] {$\Downarrow \chi$};

\draw[->, bend left=40] (A') to node {$x$} (B');
\draw[->, bend right=40] (A') to node [swap] {$x'$} (B');

\node (C) [node distance=2cm, right of=B'] {$X$};
\node (D) [node distance=3cm, right of=C] {$Y$};

\draw[->, bend left=48] (C) to node {$x$} (D);
\draw[->, bend right=48] (C) to node [swap] {$x'$} (D);

\node (W) [node distance=1.2cm, right of=C] {};
\node (U) [node distance=0.7cm, above of=W] {};
\node (V) [node distance=0.7cm, below of=W] {};
\node (W') [node distance=1.8cm, right of=C] {};
\node (U') [node distance=0.7cm, above of=W'] {};
\node (V') [node distance=0.7cm, below of=W'] {};

\draw[->, bend right] (U) to node [swap] {$\chi$} (V);
\draw[->, bend left] (U') to node {$\tilde{\chi}$} (V');

\node(c) [node distance=0.3cm, right of=W] {};
\node() [node distance=0.15cm, above of=c] {$\Lambda$};
\node() [node distance=0.15cm, below of=c] {$\Rrightarrow$};

\end{tikzpicture}
\end{center}
\end{examples}

\begin{definition} The \textit{free strict $\omega$-category monad} $(-)^* = \big((-)^*,\eta, \mu \big)$ on $\mathbf{GSet}$ is the monad arising from the adjunction $(-)^* \dashv U \colon \ \mathbf{Str} \ \omega \mhyphen \mathbf{Cat} \longrightarrow \mathbf{GSet}$.
\end{definition}

\begin{notation}
We denote by $(-)^* \mhyphen \mathbf{Alg}$ the category of algebras for the monad $(-)^*$.
\end{notation}

It is shown in \cite[Appendix F]{TL} that the forgetful functor $U{:} \ \mathbf{Str} \ \omega \mhyphen \mathbf{Cat} \longrightarrow \mathbf{GSet}$ is monadic, so the free-forgetful adjunctions 

\begin{center}
\begin{tikzpicture}[node distance=3cm, auto]

\node (A) {$\mathbf{GSet}$};
\node (B) [ right of=A] {$\mathbf{Str} \ \omega \mhyphen \mathbf{Cat}$};
\node () [node distance=1.5cm, right of=A] {$\perp$};

\draw[->, bend left=40] (A) to node {} (B);
\draw[->, bend left=40] (B) to node {} (A);

\node (A') [node distance=3cm, right of=B] {$\mathbf{GSet}$};
\node (B') [right of=A'] {$(-)^* \mhyphen \mathbf{Alg}$};
\node () [node distance=1.5cm, right of=A'] {$\perp$};

\draw[->, bend left=40] (A') to node {} (B');
\draw[->, bend left=40] (B') to node {} (A');

\end{tikzpicture}
\end{center}
are equivalent. In fact, a $(-)^{\ast}$-algebra structure on a globular set $\bs{G}$ is precisely a strict $\omega$-category with underlying globular set $\bs{G}$, and a morphism of $(-)^{\ast}$-algebras is precisely a strict $\omega$-functor. This can be seen by unpacking the definitions:

\begin{unpackdef}
An algebra for $(-)^{\ast}$ is a morphism $\theta: \bs{G}^{\ast} \longrightarrow \bs{G}$ of globular sets satisfying unit and multiplication axioms. Each $\theta_n$ composes $n$-pasting diagrams in $\bs{G}$ into single $n$-cells of $\bs{G}$ in a way that is consistent with the sources and targets. 

\vspace{1mm}

\begin{center}
\begin{tikzpicture}[node distance=2.5cm, auto]

\node (A) {$X$};
\node (B) [right of=A] {$Y$};
\node (C) [right of=B] {$Z$};
\node (D) [right of=C] {$U$};

\draw[->, bend left=80] (A) to node {$x$} (B);
\draw[->, bend left=25] (A) to node {} (B);
\draw[->, bend right=25] (A) to node [swap] {} (B);
\draw[->, bend right=80] (A) to node [swap] {$x'''$} (B);

\node (Y) [node distance=1.25cm, right of=A] {$\Downarrow  \chi' $};
\node (X) [node distance=0.63cm, above of=Y] {$\Downarrow  \chi $};
\node (Z) [node distance=0.63cm, below of=Y] {$\Downarrow  \chi'' $};

\draw[->] (B) to node {$y$} (C);

\draw[->, bend left=60] (C) to node {$z$} (D);
\draw[->] (C) to node {} (D);
\draw[->, bend right=60] (C) to node [swap] {$z''$} (D);

\node (W) [node distance=1.25cm, right of=C] {};
\node (U) [node distance=0.4cm, above of=W] {$\Downarrow  \zeta $};
\node (V) [node distance=0.4cm, below of=W] {$\Downarrow  \zeta' $};

\node (A') [node distance=3.3cm, right of=D] {$X$};
\node (B') [node distance=4.4cm, right of=A'] {$U$};
\node (X') [node distance=2.2cm, right of=A'] {\small $\Downarrow \theta_2(\chi, \chi',\chi'', y, \zeta, \zeta')$};

\draw[->, bend left=40] (A') to node {$\theta_1(x,y,z)$} (B');
\draw[->, bend right=40] (A') to node [swap] {$\theta_1(x''',y, z'')$} (B');

\node (x) [node distance=0.5cm, right of=D] {};
\node (y) [node distance=0.5cm, left of=A'] {};
\draw[|->, dashed] (x) to node {$\theta_2$} (y);

\end{tikzpicture}
\end{center}
Note that $\theta_0$ is the identity function on 0-cells; this follows from the unit axiom, which says that $\theta_n$ sends simple $n$-pasting diagrams in $\bs{G}$ to the corresponding $n$-cell of $\bs{G}$. The binary composition operations on $n$-cells present in an $\omega$-category are the result of applying $\theta_n$ to $n$-pasting diagrams consisting of a single pair of $n$-cells sharing a $k$-cell boundary. This is demonstrated by the examples below.

\vspace{1mm}

\begin{center}
\begin{tikzpicture}[node distance=2.5cm, auto]

\node (A) {$X$};
\node (B) [node distance=2cm, right of=A] {$Y$};
\node (C) [node distance=2cm, right of=B] {$Z$};
\node (x) [node distance=0.5cm, right of=C] {};

\draw[->] (A) to node {$x$} (B);
\draw[->] (B) to node {$y$} (C);

\node (A') [node distance=3.3cm, right of=C] {$X$};
\node (B') [right of=A'] {$Z$};
\node (y) [node distance=0.5cm, left of=A'] {};

\draw[->] (A') to node {$xy$} (B');
\draw[|->, dashed] (x) to node {$\theta_1$} (y);

\node (F) [node distance=2cm, below of=C] {$Z$};
\node (E) [left of=F] {$Y$};
\node (D) [left of=E] {$X$};
\node () [node distance=1.25cm, right of=D] {$\Downarrow \chi$};
\node () [node distance=1.25cm, right of=E] {$\Downarrow \psi$};
\node (p) [node distance=0.5cm, right of=F] {};

\draw[->, bend left=40] (D) to node {$x$} (E);
\draw[->, bend right=40] (D) to node [swap] {$x'$} (E);
\draw[->, bend left=40] (E) to node {$y$} (F);
\draw[->, bend right=40] (E) to node [swap] {$y'$} (F);

\node (D') [node distance=3.3cm, right of=F] {$X$};
\node (E') [node distance=2.8cm, right of=D'] {$Z$};
\node () [node distance=1.4cm, right of=D'] {$\Downarrow \chi \ast \psi$};
\node (q) [node distance=0.5cm, left of=D'] {};

\draw[->, bend left=40] (D') to node {$xy$} (E');
\draw[->, bend right=40] (D') to node [swap] {$x'y'$} (E');
\draw[|->, dashed] (p) to node {$\theta_2$} (q);

\node (H') [node distance=2.7cm, below of=F] {$Y$};
\node (G') [left of=H'] {$X$};
\node (u) [node distance=1.25cm, right of=G'] {};
\node () [node distance=0.4cm, above of=u] {$\Downarrow \chi$};
\node () [node distance=0.4cm, below of=u] {$\Downarrow \chi'$};
\node (j) [node distance=0.5cm, right of=H'] {};

\draw[->, bend left=60] (G') to node {$x$} (H');
\draw[->] (G') to node {} (H');
\draw[->, bend right=60] (G') to node [swap] {$x''$} (H');

\node (G) [node distance=3.3cm, right of=H'] {$X$};
\node (H) [node distance=2.8cm, right of=G] {$Y$};
\node () [node distance=1.4cm, right of=G] {$\Downarrow \chi \cdot \chi'$};
\node (k) [node distance=0.5cm, left of=G] {};

\draw[->, bend left=40] (G) to node {$x$} (H);
\draw[->, bend right=40] (G) to node [swap] {$x''$} (H);
\draw[|->, dashed] (j) to node {$\theta_2$} (k);

\end{tikzpicture}
\end{center}

\noindent The images of degenerate $n$-pasting diagrams (Definition \ref{degenerate pd}) under $\theta_n$ are the identity $n$-cells.

\vspace{1mm}

\begin{center}
\begin{tikzpicture}[node distance=2cm, auto]

\node (A) {$X$};

\node (X) [node distance=3.3cm, right of=A] {$X$};
\node (Y) [node distance=2.25cm, right of=X] {$X$};

\draw[->] (X) to node {$1_X$} (Y);

\node (x) [node distance=0.5cm, right of=A] {};
\node (y) [node distance=0.5cm, left of=X] {};
\draw[|->, dashed] (x) to node {$\theta_1$} (y);

\node (A') [node distance=1.7cm, below of=A] {$X$};

\node (X') [node distance=3.3cm, right of=A'] {$X$};
\node (Y') [node distance=2.5cm, right of=X'] {$X$};
\node () [node distance=1.25cm, right of=X'] {$\Downarrow 1_{1_X}$};

\draw[->, bend left=40] (X') to node {$1_X$} (Y');
\draw[->, bend right=40] (X') to node [swap] {$1_X$} (Y');

\node (x') [node distance=0.5cm, right of=A'] {};
\node (y') [node distance=0.5cm, left of=X'] {};
\draw[|->, dashed] (x') to node {$\theta_2$} (y');

\node (C'') [node distance=2.4cm, below of=A'] {$Z$};
\node (B'') [left of=C''] {$Y$};
\node (A'') [left of=B''] {$X$};

\draw[->] (A'') to node {$x$} (B'');
\draw[->] (B'') to node {$y$} (C'');

\node (X'') [node distance=3.3cm, right of=C''] {$X$};
\node (Y'') [node distance=2.5cm, right of=X''] {$Z$};
\node () [node distance=1.25cm, right of=X''] {$\Downarrow 1_{xy}$};

\draw[->, bend left=40] (X'') to node {$xy$} (Y'');
\draw[->, bend right=40] (X'') to node [swap] {$xy$} (Y'');

\node (x'') [node distance=0.5cm, right of=C''] {};
\node (y'') [node distance=0.5cm, left of=X''] {};
\draw[|->, dashed] (x'') to node {$\theta_2$} (y'');

\end{tikzpicture}
\end{center}

\noindent The algebra structure map $\theta$ therefore equips $\bs{G}$ with the data of a strict $\omega$-category. The multiplication axiom is equivalent to the axioms for a strict $\omega$-category; it says that given any $n$-pasting diagram in $\bs{G}$ there is exactly one way to compose it into a single $n$-cell. For instance, given a 1-pasting diagram $(x,y,z)$ we have
\vspace{1mm}
$$ x(yz) = \theta_1\big( x, \theta_1(y,z) \big) = \theta_1\big( \theta_1(x), \theta_1(y,z) \big) = \theta_1(x,y,z) = \theta_1 \big( \theta_1(x,y), \theta_1(z) \big) = \theta_1 \big( \theta_1(x,y),z \big) = (xy)z$$
\vspace{-3mm}

\noindent so 1-cell composition satisfies the associativity axiom. The third and fourth equalities in the expression above are instances of the multiplication axiom, and the second and fifth equations are given by the unit axiom. As another example, given a simple 1-pasting diagram $(x)$ in $\bs{G}$ the multiplication axiom yields the following equalities,
$$1_Xx = \theta_1(1_X, x) = \theta_1\big(\theta_1(X), \theta_1(x)\big) = \theta_1(x) = x$$ 
$$x1_Y = \theta_1(x, 1_Y) = \theta_1\big(\theta_1(x), \theta_1(Y)\big) = \theta_1(x) = x$$
\vspace{-3mm}

\noindent so composition of 1-cells satisfies the identity axioms. 
\end{unpackdef}

\begin{unpackdef}
A morphism $f:(\bs{G}, \theta) \longrightarrow (\bs{H}, \sigma)$ of algebras for $(-)^*$ is a morphism $f:\bs{G} \longrightarrow \bs{H}$ of the underlying globular sets such that the following diagram commutes.

\vspace{1mm}

\begin{center}
\begin{tikzpicture}[node distance=2.4cm, auto]

\node (X){$\bs{G}^*$};
\node (Y) [right of=X] {$\bs{H}^*$};
\node (Z) [node distance=2cm, below of=X] {$\bs{G}$};
\node (W) [right of=Z] {$\bs{H}$};

\draw[->] (X) to node {$f^*$} (Y);
\draw[->] (X) to node [swap] {$\theta$} (Z);
\draw[->] (Z) to node [swap] {$f$} (W);
\draw[->] (Y) to node {$\sigma$} (W);

\end{tikzpicture}
\end{center}

\noindent This means that a morphism of $(-)^*$-algebras is a collection of functions $\{ f_n:G_n \longrightarrow H_n\}_{n \in \mathbb{N}}$ strictly preserving sources, targets, and the composition of pasting diagrams. Such a morphism is precisely a strict $\omega$-functor.
\end{unpackdef}

The free strict $n$-category monad, which by abuse of notation we also denote by $(-)^*$, is defined analogoulsy; we just replace globular sets with $n$-globular sets:

\begin{definition}\label{GSet_n}
The \textit{category $\mathbf{GSet}_{\bs{n}}$ of $n$-globular sets} is the presheaf category $[\mathbb{G}_n, \mathbf{Set}]$, where $\mathbb{G}_n$ is generated by the graph
\begin{center}
\begin{tikzpicture}[node distance=2cm, auto]

\node (A) {$n$};
\node (B) [left of=A] {$...$};
\node (C) [left of=B] {$1$};
\node (D) [left of=C] {$0$};

\draw[transform canvas={yshift=0.5ex},->] (B) to node {$s$} (A);
\draw[transform canvas={yshift=-0.5ex},->] (B) to node [swap] {$t$} (A);

\draw[transform canvas={yshift=0.5ex},->] (C) to node {$s$} (B);
\draw[transform canvas={yshift=-0.5ex},->] (C) to node [swap] {$t$} (B);

\draw[transform canvas={yshift=0.5ex},->] (D) to node {$s$} (C);
\draw[transform canvas={yshift=-0.5ex},->] (D) to node [swap] {$t$} (C);

\end{tikzpicture}
\end{center}
subject to the equations $ss=ts$ and $st=tt$.
\end{definition}

In keeping with the infinite dimensional case, a $(-)^*$-algebra on an $n$-globular set $\bs{G_n}$ is precisely a strict $n$-category with underlying $n$-globular set $\bs{G_n}$, and a morphism of $(-)^*$-algebras is precisely a strict $n$-functor. 

\section{Globular operads}\label{globop section}

Having seen how to define strict higher categories in terms of monads on $\mathbf{GSet}$ and $\mathbf{GSet}_{\bs{n}}$, we would like a similar way to define weaker varieties of higher category. This can be done using ($n$-)globular operads, which are defined using the free strict higher category monads.

\begin{notation} We denote by $\mathbf{1}$ the terminal globular set given by the following diagram in $\mathbf{Set}$.

\vspace{1.5mm}

\begin{center}
\begin{tikzpicture}[node distance=2cm, auto]

\node (A) {$\{ \filledstar \}$};
\node (B) [left of=A] {$\{ \filledstar \}$};
\node (C) [left of=B] {$\{ \filledstar \}$};
\node (D) [left of=C] {$...$};

\draw[transform canvas={yshift=0.5ex},->] (B) to node {} (A);
\draw[transform canvas={yshift=-0.5ex},->] (B) to node [swap] {} (A);

\draw[transform canvas={yshift=0.5ex},->] (C) to node {} (B);
\draw[transform canvas={yshift=-0.5ex},->] (C) to node [swap] {} (B);

\draw[transform canvas={yshift=0.5ex},->] (D) to node {} (C);
\draw[transform canvas={yshift=-0.5ex},->] (D) to node [swap] {} (C);

\end{tikzpicture}
\end{center}
\end{notation}

The $n$-cells of $\bs{1}^*$ are $n$-pasting diagrams in $\bs{1}$; observe that there is no need to label the individual cells within these pasting diagrams since each $k$-cell represents the single $k$-cell of $\bs{1}$. A typical 2-cell $\pi$ of $\bs{1}^*$ is illustrated below.
\begin{center}
\begin{tikzpicture}[node distance=1.5cm, auto]

\node (A) {$\cdot$};
\node (B) [node distance=2.5cm, right of=A] {$\cdot$};
\node (C) [node distance=2.5cm, right of=B] {$\cdot$};
\node (D) [node distance=2.5cm, right of=C] {$\cdot$};
\node (D') [node distance=0.7cm, left of=A] {$\pi \ =$};

\draw[->, bend left=85] (A) to node {} (B);
\draw[->, bend left=25] (A) to node {} (B);
\draw[->, bend right=25] (A) to node [swap] {} (B);
\draw[->, bend right=85] (A) to node [swap] {} (B);

\node (XB) [node distance=1.25cm, right of=A] {$\Downarrow $};
\node (XA) [node distance=0.6cm, above of=XB] {$\Downarrow $};
\node (XC) [node distance=0.6cm, below of=XB] {$\Downarrow $};

\draw[->] (B) to node {} (C);

\draw[->, bend left=60] (C) to node {} (D);
\draw[->] (C) to node {} (D);
\draw[->, bend right=60] (C) to node [swap] {} (D);

\node (W) [node distance=1.25cm, right of=C] {};
\node (YA) [node distance=0.38cm, above of=W] {$\Downarrow $};
\node (YC) [node distance=0.38cm, below of=W] {$\Downarrow $};

\end{tikzpicture}
\end{center}

\noindent We may think of the $n$-cells of $\bs{1}^*$ as the collection of possible shapes of $n$-pasting diagrams in a globular set. 

\begin{notation}\label{boundary notation}
Since the source and target $(n{-}1)$-cells of each $n$-cell of $\bs{1}^*$ are equal, we write ${\partial}$ rather than $s$ or $t$ when we want to refer to the source or target of a cell. For example, given the 2-cell $\pi$ of $\bs{1}^*$ depicted above, ${\partial} \pi$ denotes the following 1-cell of $\bs{1}^*$.

\vspace{1.5mm}

\begin{center}
\begin{tikzpicture}[node distance=1.5cm, auto]

\node (A) {$\cdot$};
\node (B) [node distance=2.2cm, right of=A] {$\cdot$};
\node (C) [node distance=2.2cm, right of=B] {$\cdot$};
\node (D) [node distance=2.2cm, right of=C] {$\cdot$};
\node (D') [node distance=0.7cm, left of=A] {${\partial} \pi \ =$};

\draw[->] (A) to node {} (B);
\draw[->] (B) to node {} (C);
\draw[->] (C) to node {} (D);

\end{tikzpicture}
\end{center}

\end{notation}

\begin{definition} 
The \textit{category $\mathbf{GColl}$ of globular collections} is the slice category $\mathbf{GSet} / \bs{1}^*$. 
\end{definition}

\begin{definition} The functor $\circ:\mathbf{GColl} \times \mathbf{GColl} \longrightarrow \mathbf{GColl}$ sends a pair $(g:\bs{G} \longrightarrow \bs{1}^*, \ h:\bs{H} \longrightarrow \bs{1}^*)$ of globular collections to the composite of the left hand diagonals in the diagram

\vspace{1.5mm}

\begin{center}
\begin{tikzpicture}[node distance=1.4cm, auto]

\node (A) {$\bs{G} \circ \bs{H}$};
\node (TA) [left of=A, below of=A] {$\bs{G}^*$};
\node (X) [left of=TA, below of=TA] {$\bs{1}^{**}$};
\node (Y) [left of=X, below of=X] {$\bs{1}^*$};
\node (B) [right of=A, below of=A] {$\bs{H}$};
\node (C) [node distance=2.8cm, below of=A] {$\bs{1}^*$};

\draw[->] (A) to node [swap] {} (TA);
\draw[->] (A) to node {} (B);
\draw[->] (TA) to node [swap] {$!^*$} (C);
\draw[->] (B) to node {$h$} (C);
\draw[->] (TA) to node [swap] {$g^*$} (X);
\draw[->] (X) to node [swap] {$\mu_{\bs{1}}$} (Y);

\end{tikzpicture}
\end{center}

\noindent where the upper square is a pullback square, and is defined on pairs of morphisms using the universal property of pullbacks.
\end{definition}

Recall that a natural transformation is \textit{cartesian} if all of its naturality squares are pullback squares.

\begin{definition}
A monad $T=(T,\eta, \mu)$ on a category $\mathcal{S}$ is \textit{cartesian} if
\begin{enumerate}[i)]
\item $\mathcal{S}$ has pullbacks,
\item $T$ preserves pullbacks, and
\item $\eta$ and $\mu$ are cartesian.
\end{enumerate}
\end{definition}

The free strict $\omega$-category monad $(-)^*$ on $\mathbf{GSet}$ is cartesian \cite[Appendix F]{TL}. Consequently, $(\bf{GColl}, \circ)$ is a monoidal category with unit $\eta_{\bs{1}}:\bs{1} \longrightarrow \bs{1}^*$. The coherence isomorphisms are defined using the universal property of pullbacks. The same property is then used to verify the axioms for a monoidal category. The construction of the left and right unit isomorphisms is straightforward, and uses the fact that $\eta$ is cartesian. The construction of the associativity isomorphisms is slighty more complex and uses that $(-)^*$ preserves pullbacks and $\mu$ is cartesian.

\begin{definition} A \textit{globular operad} is a monoid in the monoidal category $(\bf{GColl}, \circ)$.
\end{definition}

\begin{unpackdef}
A globular operad $\bs{G} = (\bs{G}, g, ids, comp)$ is a morphism $g:\bs{G} \longrightarrow \bs{1}^*$ of globular sets together with an identity map $ids:\bs{1} \longrightarrow \bs{G}$ and a composition map $comp:\bs{G} \circ \bs{G} \longrightarrow \bs{G}$ making the following diagrams commute

\begin{center}
\begin{tikzpicture}[node distance=2.8cm, auto]

\node (X) {$\bs{1}$};
\node (Y) [right of=X] {$\bs{G}$};
\node (A) [node distance=1.4cm, below of=X, right of=X] {$\bs{1}^*$};

\node (U) [node distance=2.5cm, right of=Y] {$\bs{G} \circ \bs{G}$};
\node (B) [node distance=1.4cm, below of=U, right of=U] {$\bs{G}^*$};
\node (C) [node distance=2cm, right of=B] {$\bs{1}^{**}$};
\node (D) [node distance=2cm, right of=C] {$\bs{1}^*$};
\node (V) [node distance=1.4cm, above of=D, right of=D] {$\bs{G}$};

\draw[->] (X) to node {$ids$} (Y);
\draw[->] (X) to node [swap] {$\eta_{\bs{1}}$} (A);
\draw[->] (Y) to node {$g$} (A);

\draw[->] (U) to node {$comp$} (V);
\draw[->] (U) to node [swap] {} (B);
\draw[->] (B) to node [swap] {$g^*$} (C);
\draw[->] (C) to node [swap] {$\mu_{\bs{1}}$} (D);
\draw[->] (V) to node {$g$} (D);

\end{tikzpicture}
\end{center}

\noindent and satisfying identity and associativity axioms. 
We think of each $n$-cell $\Lambda$ of $\bs{G}$ as an abstract operation composing $n$-pasting diagrams of shape $g(\Lambda)$ into single $n$-cells. For example, if $\chi$ is a 2-cell of $\bs{G}$ and $g(\chi) = \tau$,

\begin{center}\label{tau}
\begin{tikzpicture}[node distance=2.5cm, auto]

\node (B) {$\cdot$};
\node (C) [right of=B] {$\cdot$};
\node (D) [node distance=2.3cm, right of=C] {$\cdot$};
\node (D') [node distance=0.5cm, right of=D] {$= \tau$}; 

\draw[->] (C) to node {} (D);

\draw[->, bend left=60] (B) to node {} (C);
\draw[->] (B) to node {} (C);
\draw[->, bend right=60] (B) to node [swap] {} (C);

\node (W) [node distance=1.25cm, right of=B] {};
\node (YA) [node distance=0.38cm, above of=W] {$\Downarrow $};
\node (YC) [node distance=0.38cm, below of=W] {$\Downarrow $};

\node (X) [node distance=3.5cm, left of=B] {$Y$};
\node (Y) [left of=X] {$X$};
\draw[->, bend left=40] (Y) to node {$x$} (X);
\draw[->, bend right=40] (Y) to node [swap] {$x'$} (X);
\node (Z) [node distance=1.25cm, right of=Y] {$\Downarrow \chi$};

\node (P) [node distance=0.5cm, right of=X] {};
\node (Q) [node distance=0.5cm, left of=B] {};
\draw[|->, dashed] (P) to node {$g$} (Q);

\end{tikzpicture}
\end{center}

\noindent then we think of $\chi$ as an abstract operation composing 2-pasting diagrams of shape $\tau$ into single 2-cells (and of $x$ and $x'$ as operations composing 1-pasting diagrams of shape ${\partial}\tau$ into single 1-cells). 
The identity map $ids:\bs{1} \longrightarrow \bs{G}$ picks out an $n$-cell of $\bs{G}$ for each $n \in \mathbb{N}$. We denote this $n$-cell by $\text{id}_n$ and refer to it as the \textit{identity} $n$-cell. The image of $\text{id}_n$ under $g$ is the simple $n$-pasting diagram in $\bs{1}$; see Definition \ref{trivlal pd}.

\begin{center}
\begin{tikzpicture}[node distance=2.5cm, auto]

\node (C) {$\cdot$};
\node (D) [node distance=2.5cm, right of=C] {$\cdot$};

\draw[->, bend left=45] (C) to node {} (D);
\draw[->, bend right=45] (C) to node [swap] {} (D);

\node (W) [node distance=1.25cm, right of=C] {$\Downarrow$};

\node (X) [node distance=3.5cm, left of=C] {$\text{id}_0$};
\node (Y) [left of=X] {$\text{id}_0$};
\draw[->, bend left=40] (Y) to node {$\text{id}_1$} (X);
\draw[->, bend right=40] (Y) to node [swap] {$\text{id}_1$} (X);
\node (Z) [node distance=1.25cm, right of=Y] {$\Downarrow \text{id}_2$};

\node (P) [node distance=0.5cm, right of=X] {};
\node (Q) [node distance=0.5cm, left of=C] {};
\draw[|->, dashed] (P) to node {$g$} (Q);

\end{tikzpicture}
\end{center}

The $n$-cells of the globular set $\bs{G} \circ \bs{G}$ are pairs consisting of an $n$-cell $\Lambda$ of $\bs{G}$ together with an $n$-cell of $\bs{G}^*$ (or an $n$-pasting diagram in $\bs{G}$) of shape $g(\Lambda)$. A typical 2-cell $\big((\nu, \nu', v), \chi \big)$ of $\bs{G} \circ \bs{G}$ is

\vspace{-4mm}
\begin{center}
\[ \left(
\begin{tikzpicture}[node distance=2cm, auto, baseline=-1.5mm]

\node (A) {$U$};
\node (B) [node distance=2.5cm, right of=A] {$V$};
\node (C) [node distance=2.5cm, right of=B] {$W$};
\node (p) [node distance=0.44cm, right of=C] {};
\node () [node distance=0.1cm, below of=p] {$,$};
\node (C') [node distance=1cm, right of=C] {$X$};
\node (D) [node distance=2.5cm, right of=C'] {$Y$};

\draw[->] (B) to node {$v$} (C);

\draw[->, bend left=60] (A) to node {$u$} (B);
\draw[->] (A) to node {} (B);
\draw[->, bend right=60] (A) to node [swap] {$u''$} (B);

\node (W) [node distance=1.25cm, right of=A] {};
\node (YA) [node distance=0.4cm, above of=W] {$\Downarrow  \nu $};
\node (YC) [node distance=0.4cm, below of=W] {$\Downarrow  \nu' $};

\draw[->, bend left=40] (C') to node {$x$} (D);
\draw[->, bend right=40] (C') to node [swap] {$x'$} (D);

\node (XB) [node distance=1.3cm, right of=C'] {$\Downarrow  \chi $};

\end{tikzpicture}
\right) \]
\end{center}
where the left hand side is a 2-pasting diagram in $\bs{G}$ and $\chi$ is the 2-cell above  satisfying $g(\chi)=\tau$. We use this example to describe the composition map $comp: \bs{G} \circ \bs{G} \longrightarrow \bs{G}$. Let $g(\nu) = \tau_1$, $g(\nu') = \tau_2$, and $g(v) = \tau_3$,

\begin{center}
\resizebox{5.9in}{!}{
\begin{tikzpicture}[node distance=2.5cm, auto]

\node (A) {$U$};
\node (B) [node distance=2.5cm, right of=A] {$V$};
\node (C) [node distance=2.5cm, right of=B] {$W$};
\draw[->] (B) to node {$v$} (C);
\draw[->, bend left=60] (A) to node {$u$} (B);
\draw[->] (A) to node {} (B);
\draw[->, bend right=60] (A) to node [swap] {$u''$} (B);
\node (X) [node distance=1.25cm, right of=A] {};
\node (Y) [node distance=0.4cm, above of=X] {$\Downarrow \nu $};
\node (Z) [node distance=0.4cm, below of=X] {$\Downarrow \nu' $};

\node (D) [node distance=2cm, below of=B] {$\cdot$};
\node (E) [node distance=2.3cm, right of=D] {$\cdot$};
\node (F) [node distance=2.3cm, right of=E] {$\cdot$};
\node (G) [node distance=2.3cm, right of=F] {$\cdot$};
\node (G') [node distance=0.5cm, right of=G] {$= \tau_3$};
\draw[->] (F) to node {} (G);
\draw[->] (E) to node {} (F);
\draw[->] (D) to node {} (E);
\node (a) [node distance=1.25cm, right of=B] {};
\node (a') [node distance=0.2cm, below of=a] {};
\node (b) [node distance=1.2cm, left of=F] {};
\node (b') [node distance=0.15cm, above of=b] {};
\draw[|->, dashed] (a') to node {$g$} (b');

\node (H) [node distance=1.5cm, left of=A, above of=A] {$\cdot$};
\node (I) [node distance=2.4cm, left of=H] {$\cdot$};
\node (I') [node distance=0.5cm, left of=I] {$\tau_1 = $};
\draw[->, bend left=40] (I) to node {} (H);
\draw[->, bend right=40] (I) to node {} (H);
\node (X') [node distance=1.2cm, right of=I] {$\Downarrow$};;
\node (y) [node distance=0.5cm,above of=A] {};
\draw[|->, dashed] (y) to node [swap] {$g$} (H);

\node (K) [node distance=1.5cm, left of=A, below of=A] {$\cdot$};
\node (L) [node distance=2.4cm, left of=K] {$\cdot$};
\node (L') [node distance=0.5cm, left of=L] {$\tau_2 =$};
\draw[->, bend left=60] (L) to node {} (K);
\draw[->] (L) to node {} (K);
\draw[->, bend right=60] (L) to node {} (K);
\node (T) [node distance=1.2cm, right of=L] {};
\node (X'') [node distance=0.3cm, above of=T] {$\Downarrow$};
\node (Y'') [node distance=0.35cm, below of=T] {$\Downarrow$};
\node (z) [node distance=0.5cm, below of=A] {};
\draw[|->, dashed] (z) to node {$g$} (K);

\end{tikzpicture}\label{tau_i}
}
\end{center}
and denote by $ \tau \circ (\tau_1, \tau_2, \tau_3)$ the 2-cell of $\bs{1}^*$ obtained by replacing the individual cells in $\tau$ with $\tau_1, \tau_2$ and $\tau_3$, respectively.
\begin{center}
\resizebox{5.9in}{!}{
\begin{tikzpicture}[node distance=2.5cm, auto]

\node (A) {$\cdot$};
\node (B) [node distance=2.5cm, right of=A] {$\cdot$};
\node (C) [node distance=2.5cm, right of=B] {$\cdot$};
\node (C') [node distance=0.5cm, right of=C] {$= \tau$};
\draw[->] (B) to node {} (C);
\draw[->, bend left=60] (A) to node {} (B);
\draw[->] (A) to node {} (B);
\draw[->, bend right=60] (A) to node [swap] {} (B);
\node (X) [node distance=1.25cm, right of=A] {};
\node (Y) [node distance=0.4cm, above of=X] {$\Downarrow$};
\node (Z) [node distance=0.4cm, below of=X] {$\Downarrow$};

\node (D) [node distance=2cm, below of=B] {$\cdot$};
\node (E) [node distance=2.3cm, right of=D] {$\cdot$};
\node (F) [node distance=2.3cm, right of=E] {$\cdot$};
\node (G) [node distance=2.3cm, right of=F] {$\cdot$};
\node (G') [node distance=0.5cm, right of=G] {$= \tau_3$};
\draw[->] (F) to node {} (G);
\draw[->] (E) to node {} (F);
\draw[->] (D) to node {} (E);
\node (a) [node distance=1.25cm, right of=B] {};
\node (a') [node distance=0.2cm, below of=a] {};
\node (b) [node distance=1.2cm, left of=F] {};
\node (b') [node distance=0.15cm, above of=b] {};
\draw[->, dashed] (b') to node [swap] {} (a');

\node (H) [node distance=1.5cm, left of=A, above of=A] {$\cdot$};
\node (I) [node distance=2.4cm, left of=H] {$\cdot$};
\node (I') [node distance=0.5cm, left of=I] {$\tau_1 = $};
\draw[->, bend left=40] (I) to node {} (H);
\draw[->, bend right=40] (I) to node {} (H);
\node (X') [node distance=1.2cm, right of=I] {$\Downarrow$};
\node (y) [node distance=0.5cm, above of=A] {};
\draw[->, dashed] (H) to node [swap] {} (y);

\node (K) [node distance=1.5cm, left of=A, below of=A] {$\cdot$};
\node (L) [node distance=2.4cm, left of=K] {$\cdot$};
\node (L') [node distance=0.5cm, left of=L] {$\tau_2 =$};
\draw[->, bend left=60] (L) to node {} (K);
\draw[->] (L) to node {} (K);
\draw[->, bend right=60] (L) to node {} (K);
\node (T) [node distance=1.2cm, right of=L] {};
\node (X'') [node distance=0.3cm, above of=T] {$\Downarrow$};
\node (Y'') [node distance=0.35cm, below of=T] {$\Downarrow$};
\node (z) [node distance=0.5cm, below of=A] {};
\draw[->, dashed] (K) to node {} (z);

\end{tikzpicture}
}
\end{center}

\vspace{1mm}

\begin{center}
\begin{tikzpicture}[node distance=2.5cm, auto]

\node (A) {$\cdot$};
\node (B) [node distance=2.5cm, right of=A] {$\cdot$};
\node (C) [node distance=2.3cm, right of=B] {$\cdot$};
\node (D) [node distance=2.3cm, right of=C] {$\cdot$};
\node (E) [node distance=2.3cm, right of=D] {$\cdot$};

\draw[->, bend left=80] (A) to node {} (B);
\draw[->, bend left=25] (A) to node {} (B);
\draw[->, bend right=25] (A) to node [swap] {} (B);
\draw[->, bend right=80] (A) to node [swap] {} (B);
\draw[->] (B) to node {} (C);
\draw[->] (C) to node {} (D);
\draw[->] (D) to node {} (E);

\node (C') [node distance=1.4cm, left of=A] {$\tau  \circ (\tau_1, \tau_2, \tau_3) = $};

\node (X) [node distance=1.25cm, right of=A] {$\Downarrow$};
\node (Y) [node distance=0.6cm, above of=X] {$\Downarrow$};
\node (Z) [node distance=0.6cm, below of=X] {$\Downarrow$};

\end{tikzpicture}
\end{center}
Then $comp ((\nu, \nu', v), \chi) = \chi \circ (\nu, \nu', v)$
is a 2-cell of $\bs{G}$ satisfying $g(\chi \circ (\nu, \nu', v)) = \tau \circ (\tau_1, \tau_2, \tau_3)$. 
\vspace{-2mm}
\begin{center}
\resizebox{1\textwidth}{!}{
\begin{tikzpicture}[node distance=1.9cm, auto]

\node (B) {$\cdot$};
\node (C) [node distance=2.5cm, right of=B] {$\cdot$};
\node (D) [right of=C] {$\cdot$};
\node (E) [right of=D] {$\cdot$};
\node (F) [right of=E] {$\cdot$};

\draw[->, bend left=25] (B) to node {} (C);
\draw[->, bend right=25] (B) to node {} (C);
\draw[->, bend left=80] (B) to node {} (C);
\draw[->, bend right=80] (B) to node {} (C);
\draw[->] (C) to node {} (D);
\draw[->] (D) to node {} (E);
\draw[->] (E) to node {} (F);

\node (X) [node distance=1.25cm, right of=B] {$\Downarrow$};
\node (Y) [node distance=0.6cm, above of=X] {$\Downarrow$};
\node (Z) [node distance=0.6cm, below of=X] {$\Downarrow$};

\node (R) [node distance=3.5cm, left of=B] {$Y \circ (W)$};
\node (Q) [node distance=4cm, left of=R] {$X \circ (U)$};
\draw[->, bend left=40] (Q) to node {$x \circ (u,v)$} (R);
\draw[->, bend right=40] (Q) to node [swap] {$x' \circ (u'', v)$} (R);
\node [node distance=2cm, right of=Q] {$\Downarrow \chi \circ (\nu, \nu', v)$};

\node (r) [node distance=1cm, right of=R] {};
\node (a) [node distance=0.6cm, left of=B] {};
\draw[|->, dashed] (r) to node {$g$} (a);

\end{tikzpicture}
}
\end{center}
We think of $\chi \circ (\nu, \nu', v)$ as the abstract operation composing 2-pasting diagrams of shape $\tau \circ (\tau_1, \tau_2, \tau_3)$ given by first using $\nu, \nu'$ and $v$ to compose smaller components and then applying $\chi$ to the result. 

More generally, for any $n$-cell $((\Lambda_1,...,\Lambda_m), \Lambda )$ of $\bs{G} \circ \bs{G}$ there is an $n$-cell 
$$comp((\Lambda_1,...,\Lambda_m), \Lambda) = \Lambda \circ (\Lambda_1,...,\Lambda_m)$$ 
of $\bs{G}$ for which $g( \Lambda \circ (\Lambda_1,...,\Lambda_m))$ is the $n$-pasting diagram in $\bs{1}$ obtained by 
replacing the individual cells in $g(\Lambda)$ with the $g(\Lambda_i)$s.
We think of $\Lambda \circ (\Lambda_1,...,\Lambda_m)$ as the abstract operation composing $n$-pasting diagrams of shape $g( \Lambda \circ (\Lambda_1,...,\Lambda_m) )$ given by first using the $\Lambda_i$s to compose smaller components and then applying $\Lambda$ to the result. 
The identity and associativity axioms for globular operads are expressed by the following equalities.
$$\text{id}_n \circ (\Lambda) = \Lambda = \Lambda \circ (\text{id}_{n_1},...,\text{id}_{n_m})$$
\vspace{-4mm}
$$\Lambda \circ \big( \Lambda_1 \circ (\Lambda_{11},...,\Lambda_{1k_1}),...,\Lambda_m \circ (\Lambda_{m1},...,\Lambda_{mk_m}) \big) = \big( \Lambda \circ (\Lambda_1,...,\Lambda_m) \big) \circ (\Lambda_{11},...,\Lambda_{1k_1},...,\Lambda_{m1},...,\Lambda_{mk_m})$$
\end{unpackdef}

\begin{definition}
A \textit{morphism of globular operads} is a morphism of monoids in $(\bf{GColl}, \circ)$, i.e., a morphism of the underlying globular collections preserving composition and identities.
\end{definition}

\begin{notation} We denote by $\mathbf{GOp}$ the category of globular operads and their morphisms. 
\end{notation}

Truncating everything in this section to $n$-dimensions by replacing the free strict $\omega$-category monad on $\mathbf{GSet}$ with the free strict $n$-category monad on $\mathbf{GSet}_{\bs{n}}$ will give an account of $n$-globular operads.

\begin{notation} We denote by $\mathbf{GColl}_{\bs{n}}$ and $\mathbf{GOp_{\bs{n}}}$ the categories of $n$-globular collections and $n$-globular operads, respectively.
\end{notation}

\section{Algebras for globular operads}\label{algebras section}

In this section we define the category of algebras associated to an ($n$-)globular operad. In section \ref{presenations section} we show that for any algebraic notion of higher category, we can construct an ($n$-)globular operad whose algebras are precisely those higher categories. 

\begin{definition} Each globular operad $\bs{G} = (\bs{G}, g, ids, comp)$ induces a monad $(-)_{\bs{G}}$ on $\bf{GSet}$. The underlying endofunctor sends a globular set $\bs{A}$ to the pullback object

\begin{center}
\begin{tikzpicture}[node distance=1.4cm, auto]

\node (X) {$\bs{A_G}$};
\node (TX) [left of=X, below of=X] {$\bs{A}^*$};
\node (C) [right of=X, below of=X] {$\bs{G}$};

\draw[->] (X) to node [swap] {} (TX);
\draw[->] (X) to node {} (C);

\node (TC') [node distance=2.8cm, below of=X] {$\bs{1}^*$};

\draw[->] (TX) to node [swap] {$!^*$} (TC');
\draw[->] (C) to node {$g$} (TC');

\end{tikzpicture}
\end{center}
and is defined on morphisms using the universal property of pullbacks. The same property yields a canonical morphism $j:(\bs{A_G})_{\bs{G}} \longrightarrow \bs{G} \circ\bs{G}$. The unit and multiplication of the monad at $\bs{A}$ are then the unique morphisms satisfying the commutativity of the following diagrams.

\vspace{1mm}

\begin{center}
\begin{tikzpicture}[node distance=1.4cm, auto]

\node (B'A') {$\bs{A_G}$};
\node (TA') [left of=B'A', below of=B'A'] {$\bs{A}^*$};
\node (B') [right of=B'A', below of=B'A'] {$\bs{G}$};
\node (TY) [right of=TA', below of=TA'] {$\bs{1}^*$};

\node (K) [node distance=0.85cm, above of=B'A'] {};
\node (K') [node distance=0.65cm, right of=K] {$mult_{\bs{A}}$};

\draw[->] (B'A') to node {} (TA');
\draw[->] (B'A') to node [swap] {} (B');
\draw[->] (TA') to node [swap] {$!^*$} (TY);
\draw[->] (B') to node {$g$} (TY);

\node (BA) [node distance=2.8cm, above of=B'A'] {$(\bs{A_G})_{\bs{G}}$};
\node (B) [right of=BA, below of=BA] {$\bs{G} \circ \bs{G}$};
\node (X) [left of=BA, below of=BA] {$\bs{A_G}^*$};
\node (TB) [node distance=1.6cm, above of=TA'] {$\bs{A}^{**}$};

\draw[->] (TB) to node [swap] {$\mu_{\bs{A}}$} (TA');
\draw[->] (X) to node {} (TB);
\draw[->] (BA) to node {} (X);
\draw[->] (BA) to node {$j$} (B);
\draw[->] (B) to node {$comp$} (B');
\draw[->, dashed] (BA) to node [swap] {} (B'A');

\node (BAq) [node distance=6cm, left of=B'A'] {$\bs{A_G}$};
\node (TAq) [left of=BAq, below of=BAq] {$\bs{A}^*$};
\node (Bq) [right of=BAq, below of=BAq] {$\bs{G}$};
\node (TYq) [node distance=2.8cm, below of=BAq] {$\bs{1}^*$};

\draw[->] (BAq) to node [swap] {} (TAq);
\draw[->] (BAq) to node {} (Bq);
\draw[->] (TAq) to node [swap] {$!^*$} (TYq);
\draw[->] (Bq) to node {$g$} (TYq);

\node (Xq) [node distance=2.5cm, above of=BAq] {$\bs{A}$};
\node (xq) [node distance=0.5cm, below of=Xq] {};
\node (Cq) [right of=Xq, below of=Xq] {$\bs{1}$}; 

\draw[->, bend right=35] (Xq) to node [swap] {$\eta_{\bs{A}}$} (TAq);
\draw[->] (Xq) to node {$!$} (Cq);
\draw[->] (Cq) to node {$ids$} (Bq);
\draw[->, dashed] (Xq) to node [swap] {} (BAq);
\draw[->, dashed] (xq) to node [swap] {$unit_{\bs{A}}$} (BAq);

\end{tikzpicture}
\end{center}

\end{definition}

\begin{remark}
The monad axioms for $(-)_{\bs{G}}$ are satisfied by the identity and associativity axioms for $\bs{G}$. 
\end{remark}

The $n$-cells of $\bs{A_G}$ are pairs consisting of an $n$-cell $\Lambda$ of $\bs{G}$ together with an $n$-pasting diagram in $\bs{A}$ of shape $g(\Lambda)$. A typical 2-cell $( (\alpha, \alpha', b), \chi)$ of $\bs{A_G}$ is

\vspace{-6mm}

\begin{center}
\[ \left( 
\begin{tikzpicture}[node distance=2cm, auto, baseline=-1.5mm]

\node (A) {$A$};
\node (B) [node distance=2.5cm, right of=A] {$B$};
\node (C) [node distance=2.4cm, right of=B] {$C$};
\node (P) [node distance=0.45cm, right of=C] {};
\node () [node distance=0.1cm, below of=P] {\large ,};
\node (C') [node distance=1cm, right of=C] {$X$};
\node (D) [node distance=2.5cm, right of=C'] {$Y$};

\draw[->, bend left=60] (A) to node {$a$} (B);
\draw[->] (A) to node {} (B);
\draw[->, bend right=60] (A) to node [swap] {$a''$} (B);

\draw[->] (B) to node {$b$} (C);

\node (W) [node distance=1.25cm, right of=A] {};
\node (YA) [node distance=0.4cm, above of=W] {$\Downarrow  \alpha$};
\node (YC) [node distance=0.4cm, below of=W] {$\Downarrow  \alpha'$};

\draw[->, bend left=40] (C') to node {$x$} (D);
\draw[->, bend right=40] (C') to node [swap] {$x'$} (D);

\node (XB) [node distance=1.3cm, right of=C'] {$\Downarrow  \chi$};

\end{tikzpicture}
\right) \]
\end{center}

\vspace{1mm}

\noindent where the left hand side is a 2-pasting diagram in $\bs{A}$ and $\chi$ is a 2-cell of $\bs{G}$ satisfying $g(\chi) = \tau$; see page \pageref{tau}. We think of each $n$-cell of $\bs{A_G}$ as an $n$-pasting diagram in $\bs{A}$ together with an operation composing it into a single $n$-cell. For each $n$-cell $\alpha$ of $\bs{G}$ we have 
$$unit_{\bs{A}}(\alpha) = ( (\alpha), \, \text{id}_n),$$
and the image of an $n$-cell $\big( ( (\alpha_{11},...,\alpha_{1k_1}), \hspace{0.5mm} \Lambda_1 ) \hspace{0.3mm} ,..., \hspace{0.3mm} ( (\alpha_{m1},...,\alpha_{mk_m}), \hspace{0.5mm} \Lambda_m ), \hspace{0.5mm} \Lambda \big)$ of  $(\bs{A_G})_{\bs{G}}$ under $mult_{\bs{A}}$ is the $n$-cell $$\big( (\alpha_{11},...,\alpha_{1k_1},...,\alpha_{m1},...,\alpha_{mk_m}), \hspace{0.5mm} \Lambda \circ (\Lambda_1,...,\Lambda_m)\big)$$

\vspace{1mm}

\noindent of $\bs{A}$. For instance, $mult_{\bs{A}}$ sends a 2-cell 

\vspace{-4mm}
\begin{center}
\[ \left(
\resizebox{0.95\textwidth}{!}{
\begin{tikzpicture}[node distance=1.8cm, auto, baseline=-12mm]
\node (B) {$A$};
\node () [node distance=0.3cm, left of=B] {\Large $\Bigg($};
\node (C) [right of=B] {$B$};
\node (t) [node distance=0.4cm, right of=C] {};
\node () [node distance=0.1cm, below of=t] {$,$};
\draw[->, bend left=50] (B) to node {$a$} (C);
\draw[->, bend right=50] (B) to node [swap] {} (C);
\node (y) [node distance=0.9cm, right of=B] {$\Downarrow \alpha$};
\node (Z) [node distance=0.8cm, right of=C] {$U$};
\node (U) [right of=Z] {$V$};
\draw[->, bend left=50] (Z) to node {$u$} (U);
\draw[->, bend right=50] (Z) to node [swap] {} (U);
\node (X') [node distance=0.9cm, right of=Z] {$\Downarrow \nu$};
\node () [node distance=0.3cm, right of=U] {\Large $\Bigg)$};
\node (Q) [node distance=0.7cm, below of=A] {};

\node (B') [node distance=2cm, below of=B] {\small $A$};
\node () [node distance=0.3cm, left of=B'] {\Large $\Bigg($};
\node (C') [right of=B'] {$B$};
\node (r) [node distance=0.4cm, right of=C'] {};
\node () [node distance=0.1cm, below of=r] {$,$};
\draw[->, bend left=70] (B') to node {} (C');
\draw[->] (B') to node {} (C');
\draw[->, bend right=70] (B') to node [swap] {$a'''$} (C');
\node (y') [node distance=0.9cm, right of=B'] {};
\node (y'') [node distance=0.3cm, above of=y'] {$\Downarrow \alpha'$};
\node (y''') [node distance=0.3cm, below of=y'] {$\Downarrow \alpha'$};
\node (Z') [node distance=0.8cm, right of=C'] {$U$};
\node (U') [right of=Z'] {$V$};
\draw[->, bend left=50] (Z') to node {} (U');
\draw[->, bend right=50] (Z') to node [swap] {$u''$} (U');
\node (X') [node distance=0.9cm, right of=Z'] {$\Downarrow \nu'$};
\node () [node distance=0.3cm, right of=U'] {\Large $\Bigg)$};

\node (R) [node distance=1cm, below of=U] {};
\node (C'') [node distance=1cm, right of=R] {$B$};
\node (D) [node distance=1.5cm, right of=C''] {$C$};
\node (E) [node distance=1.5cm, right of=D] {$D$};
\node (F) [node distance=1.5cm, right of=E] {$E$};
\node (q) [node distance=0.4cm, right of=F] {};
\node () [node distance=0.1cm, below of=q] {$,$};
\node (U'') [node distance=0.8cm, right of=F] {$V$};
\node (V) [node distance=1.5cm, right of=U''] {$W$};
\draw[->] (C'') to node {$b$} (D);
\draw[->] (D) to node {$c$} (E);
\draw[->] (E) to node {$d$} (F);
\draw[->] (U'') to node {$v$} (V);
\node () [node distance=0.3cm, left of=C''] {$\Big($};
\node () [node distance=0.3cm, right of=V] {$\Big)$};

\node (X) [node distance=1.1cm,right of=V] {$X$};
\node (p) [node distance=0.65cm,right of=V] {};
\node () [node distance=0.1cm, below of=p] {,};
\node (Y) [right of=X] {$Y$};
\draw[->, bend left=50] (X) to node {$x$} (Y);
\draw[->, bend right=50] (X) to node [swap] {$x'$} (Y);
\node (z) [node distance=0.9cm, right of=X] {$\Downarrow \chi$};
\end{tikzpicture} 
}
\right) \]
\end{center}

\vspace{2mm}

\noindent of $(\bs{A_G})_{\bs{G}}$ to the 2-cell of $\bs{A_G}$ below.

\vspace{-3mm}
\begin{center}
\[ \left(
\begin{tikzpicture}[node distance=2cm, auto, baseline=-1.5mm]
\node (B) {$A$};
\node (C) [node distance=2.5cm, right of=B] {$B$};
\node (D) [right of=C] {$C$};
\node (E) [right of=D] {$D$};
\node (F) [right of=E] {$E$};
\node (P) [node distance=0.45cm, right of=F] {};
\node () [node distance=0.1cm, below of=P] {$,$};

\draw[->, bend left=80] (B) to node {$a$} (C);
\draw[->, bend right=80] (B) to node [swap] {$a'''$} (C);
\draw[->, bend left=25] (B) to node {} (C);
\draw[->, bend right=25] (B) to node {} (C);
\draw[->] (C) to node {$b$} (D);
\draw[->] (D) to node {$c$} (E);
\draw[->] (E) to node {$d$} (F);

\node (Y) [node distance=1.25cm, right of=B] {$\Downarrow  \alpha'$};
\node (Y') [node distance=0.6cm, above of=Y] {$\Downarrow  \alpha$};
\node (Y'') [node distance=0.6cm, below of=Y] {$\Downarrow  \alpha''$};

\node (A') [node distance=1.4cm, right of=F] {$X \circ (U)$};
\node (B') [node distance=4cm, right of=A'] {$Y \circ (W)$};

\draw[->, bend left=40] (A') to node {$x \circ (u,v)$} (B');
\draw[->, bend right=40] (A') to node [swap] {$x' \circ (u'', v)$} (B');

\node (R) [node distance=2cm, right of=A'] {$\Downarrow \chi \circ (\nu, \nu', v)$};

\end{tikzpicture}
\right) \]
\end{center}

\vspace{2mm}

\noindent Here the images of $\nu$, $\nu'$ and $v$ under the underlying collection map $g$ are as on page \pageref{tau_i}. 

\begin{definition}
Let $\bs{G}$ be a globular operad. The \textit{category $\bs{G} \mhyphen \bf{Alg}$ of algebras for $\bs{G}$} (or $\bs{G}$-\textit{algebras}) is the category of algebras for the monad $(-)_{\bs{G}}$.
\end{definition}

\begin{unpackdef}
An algebra for $\bs{G}$ on a globular set $\bs{A}$ is a morphism $\theta:\bs{A_G} \longrightarrow \bs{A}$ of globular sets satisfying unit and multiplication axioms. Given an $n$-cell $( (\alpha_1,...,\alpha_m),  \, \Lambda )$ of $\bs{A_G}$ we write
$$\theta_n ( (\alpha_1,...,\alpha_m), \, \Lambda ) = \Lambda(\alpha_1,...,\alpha_m)$$
and think of $\Lambda(\alpha_1,...,\alpha_m)$ as a composition of the $n$-pasting diagram $(\alpha_1,...,\alpha_m)$ in $\bs{A}$ into single $n$-cell of $\bs{A}$. For example, $\theta$ sends the 2-cell $((\alpha, \alpha', b), \chi)$ of $\bs{A_G}$ above to a 2-cell

\begin{center}
\begin{tikzpicture}[node distance=2cm, auto]
\node (R) {$X(A)$};
\node (Q) [node distance=3.7cm, right of=R] {$Y(C)$};
\draw[->, bend left=37] (R) to node {$x(a,b)$} (Q);
\draw[->, bend right=37] (R) to node [swap] {$x'(a'',b)$} (Q);
\node (U) [node distance=1.85cm, right of=R] {$\Downarrow \chi(\alpha, \alpha',b)$};
\end{tikzpicture}
\end{center}

\noindent of $\bs{A}$, thought of as a composite of $(\alpha, \alpha', b)$. 
The unit and multiplication axioms are expressed by the following equalities.
$$\text{id}_n(\alpha) = \alpha$$
\vspace{-1mm}
$$\Lambda \hspace{0.5mm} \big( \Lambda_1 (\alpha_{11},...,\alpha_{1k_1}),..., \hspace{0.5mm} \Lambda_m(\alpha_{m1},...,\alpha_{mk_m}) \big) = \big( \Lambda \circ (\Lambda_1,...,\Lambda_m) \big) (\alpha_{11},...,\alpha_{1k_1},...,\alpha_{m1},...,\alpha_{mk_m})$$
\end{unpackdef}

\vspace{0.5mm}

\begin{unpackdef}
A morphism $F:(\bs{A},\theta) \longrightarrow (\bs{B},\sigma)$ of algebras for $\bs{G}$ is a morphism $F:\bs{A} \longrightarrow \bs{B}$ of the underlying globular sets \textit{strictly} preserving the composition of pasting diagrams defined by the algebra structures.
\end{unpackdef}

Observe the similarities between the free strict higher category monad  $(-)^*$ on $\bf{GSet}$ and the monad $(-)_{\bs{G}}$ induced by a globular operad $\bs{G}$. Given a globular set $\bs{A}$, both $\bs{A}^*$ and $\bs{A}_{\bs{G}}$ are globular sets whose $n$-cells are $n$-pasting diagrams in $\bs{A}$, together with some extra data in the case of $\bs{A}_{\bs{G}}$, and an algebra for either monad on $\bs{A}$ is a way of composing these pasting diagrams in a coherent way. 
However, not every globular operad gives rise to something that we could call a theory of $\omega$-category. Take for example the initial globular operad given by equipping the globular collection $\eta_{\bs{1}}:\bs{1} \longrightarrow \bs{1}^*$ with its unique operad structure. The monad induced on $\mathbf{GSet}$ by this operad is isomorphic to the identity monad, so the category of algebras is isomorphic to $\mathbf{GSet}$ rather than to some category of $\omega$-categories and the strict functors between them. In Section \ref{glob ops for higher cats section}, we determine precisely when a globular operad \textit{does} give rise to some sensible theory of $\omega$-category. 

\begin{example}[Strict $\omega$-categories]\label{GlobOp T for strict w cats}
The terminal globular operad $\bs{T}$ is given by equipping the collection $1:\bs{1}^* \longrightarrow \bs{1}^*$ with its unique operad structure. The monad $(-)_{\bs{T}}$ induced on $\mathbf{GSet}$ is isomorphic to the free strict $\omega$-category monad $(-)^*$, so the category $\bs{T} \mhyphen \bf{Alg}$ of algebras for $\bs{T}$ is equivalent to the category $\bf{Str} \ \omega \mhyphen \bf{Cat}$ of strict $\omega$-categories. 
We refer to $\bs{T}$ as \textit{the globular operad for strict $\omega$-categories}.
\end{example}

An algebra for an $n$-globular operad $\bs{G_n}$ is defined analogousy to an algebra for a globular operad $\bs{G}$.

\begin{example}[Strict $n$-categories]\label{GlobOp T for strict n cats}
The monad induced on $\mathbf{GSet}_{\bs{n}}$ by the terminal $n$-globular operad $\bs{T_n}$ is isomorphic to the free strict $n$-category monad $(-)^*$,
so we refer to $\bs{T_n}$ as \textit{the globular operad for strict $n$-categories}.
\end{example}

\section{Contractibility}\label{contractibility section}

In this section, we demonstrate how a globular operad being contractible is precisely the property needed to guarantee that its algebras satisfy the required conditions on composition and coherence for $\omega$-categories. We then adjust the coherence condition to suit $n$-categories, and show that a similar statement is true for $n$-globular operads. In Section \ref{glob ops for higher cats section}, we use the results obtained here to define subcategories of $\mathbf{GOp}$ and $\mathbf{GOp}_{\bs{n}}$, respectively, consisting of just those ($n$-)globular operads whose algebras are some variety of higher category. We begin with a brief explanation of the composition and coherence conditions.

\begin{compositionc} 
For any $n$-pasting diagram in an $\omega$-category together with a composition of the $(n-1)$-dimensional boundary, there should be an operation composing it into a single $n$-cell that is consistent with this composition of the boundary.
\end{compositionc}

\begin{example} Let 

\begin{center}
\begin{tikzpicture}[node distance=2.5cm, auto]

\node (A) {$A$};
\node (B) [right of=A] {$B$};
\node (C) [right of=B] {$C$};
\node (D) [right of=C] {$D$};

\draw[->, bend left=40] (A) to node {$a$} (B);
\draw[->, bend left=40] (B) to node {$b$} (C);
\draw[->, bend left=40] (C) to node {$c$} (D);
\draw[->, bend right=40] (A) to node [swap] {$a'$} (B);
\draw[->, bend right=40] (B) to node [swap] {$b'$} (C);
\draw[->, bend right=40] (C) to node [swap] {$c'$} (D);

\node (W) [node distance=1.25cm, right of=A] {$\Downarrow  \alpha$};
\node (U) [node distance=1.25cm, right of=B] {$\Downarrow  \beta$};
\node (V) [node distance=1.25cm, right of=C] {$\Downarrow  \gamma$};

\end{tikzpicture}
\end{center}

\noindent be a 2-pasting diagram in some $\omega$-category. Given any way of composing the 1-cell boundary, say $a(bc)$ and $(a'b')c'$, there should be a way to compose the pasting diagram into a single 2-cell of the form

\vspace{0.5mm}

\begin{center}
\begin{tikzpicture}[node distance=2.5cm, auto]

\node (A) {$A$};
\node (B) [right of=A] {$D$.};

\draw[->, bend left=40] (A) to node {$a(bc)$} (B);
\draw[->, bend right=40] (A) to node [swap] {$(a'b')c'$} (B);

\node () [node distance=1.25cm, right of=A] {$\Downarrow$};

\end{tikzpicture}
\end{center}

\end{example}

\begin{coherencec}
For any two ways of composing the same $n$-pasting diagram in an $\omega$-category that agree on the composition of its $(n-1)$-dimensional boundary, there should be a coherence $(n+1)$-cell between the two composites. 
\end{coherencec}

\begin{example} The compositions $a(bc)$ and $(ab)c$ of a 1-pasting diagram

\vspace{0.5mm}

\begin{center}
\begin{tikzpicture}[node distance=2cm, auto]

\node (A) {$A$};
\node (B) [right of=A] {$B$};
\node (C) [right of=B] {$C$};
\node (D) [right of=C] {$D$};

\draw[->] (A) to node {$a$} (B);
\draw[->] (B) to node {$b$} (C);
\draw[->] (C) to node {$c$} (D);

\end{tikzpicture}
\end{center}

\vspace{0.5mm}

\noindent in an $\omega$-category both use the same way of composing the boundary 0-cells $A$ and $D$ - the `do nothing' composition, so there should be a coherence 2-cell between them,

\vspace{0.5mm}

\begin{center}
\begin{tikzpicture}[node distance=2.5cm, auto]

\node (A) {$A$};
\node (B) [right of=A] {$D$.};

\draw[->, bend left=40] (A) to node {$a(bc)$} (B);
\draw[->, bend right=40] (A) to node [swap] {$(ab)c$} (B);

\node (W) [node distance=1.25cm, right of=A] {$\Downarrow$};

\end{tikzpicture}
\end{center}

\end{example}

We now give Leinster's definition of contractibility for globular operads \cite[Chapter 9.1]{TL}, and show how contractibility combines composition and coherence into a single property. It should be noted that this definition differs from Batanin's original definition, which can be found in \cite[Section 8]{MB}.

\begin{definition}
Let $\bs{G}$ be a globular set. For $n \geqslant 1$ we say that a pair $(x,x')$ of $n$-cells of $\bs{G}$ are \textit{parallel} if they share the same source and target, so $s(x)=s(x')$ and $t(x)=t(x')$. All pairs of 0-cells are considered parallel.
\end{definition}

\begin{definition}  A \textit{contraction function} on a globular collection $g:\bs{G} \longrightarrow \bs{1}^*$ is a function assigning to each triple $(x, x', \tau)$, where $(x,x')$ is a parallel pair of $n$-cells of $\bs{G}$ satisfying $g(x) = g(x') = {\partial}\tau$ (see Notation \ref{boundary notation}), an $(n{+}1)$-cell $\chi:x \longrightarrow x'$ of $\bs{G}$ satisfying $g(\chi)=\tau$.
\end{definition}

\begin{center}
\begin{tikzpicture}[node distance=2.5cm, auto]

\node (B) {$\cdot$};
\node (C) [right of=B] {$\cdot$};
\node (D) [node distance=2.3cm, right of=C] {$\cdot$};
\node (D') [node distance=0.5cm, right of=D] {$= \tau$}; 

\draw[->] (C) to node {} (D);

\draw[->, bend left=60] (B) to node {} (C);
\draw[->] (B) to node {} (C);
\draw[->, bend right=60] (B) to node [swap] {} (C);

\node (W) [node distance=1.25cm, right of=B] {};
\node (YA) [node distance=0.38cm, above of=W] {$\Downarrow $};
\node (YC) [node distance=0.38cm, below of=W] {$\Downarrow $};

\node (X) [node distance=3.5cm, left of=B] {$Y$};
\node (Y) [left of=X] {$X$};
\draw[->, bend left=40] (Y) to node {$x$} (X);
\draw[->, bend right=40] (Y) to node [swap] {$x'$} (X);
\node (Z) [node distance=1.25cm, right of=Y] {$\Downarrow \chi$};

\node (P) [node distance=0.5cm, right of=X] {};
\node (Q) [node distance=0.5cm, left of=B] {};
\draw[|->, dashed] (P) to node {$g$} (Q);

\end{tikzpicture}
\end{center}

Given a contraction function on a globular collection $g:\bs{G} \longrightarrow \bs{1}^*$, we refer to the cells of $\bs{G}$ in the image of the contraction function as \textit{contraction cells}.

\begin{definition}
A globular operad $\bs{G}$ is \textit{contractible} if there exists a contraction function on its underlying globular collection.
\end{definition}

\begin{example} There is a unique contraction on the globular operad $\bs{T}$ for strict $\omega$-categories; see Example \ref{GlobOp T for strict w cats}. When equipped with this contraction, every $n$-cell of $\bs{T}$ for $n>0$ is a contraction cell.
\end{example}

\begin{notation}\label{the category CG-Op} We denote by $\mathbf{C \mhyphen GOp}$ the category whose objects are globular operads carrying a specified contraction and whose arrows are contraction preserving morphisms of globular operads.
\end{notation}

We demonstrate how the contractibilty of a globular operad ensures that its algebras satisfy the required composition and coherence conditions for $\omega$-categories with some low dimensional examples. In these examples we assume that $\bs{G}$ is a contractible globular operad with underlying globular collection $g:\bs{G} \longrightarrow \bs{1}^*$, and that $\theta:\bs{A}_{\bs{G}} \longrightarrow \bs{A}$
is an algebra for $\bs{G}$ on a globular set $\bs{A}$. Note that we only assume that $\bs{G}$ is contractible, we do not choose a \textit{specific} contraction function.

\begin{example}[\textit{Composition}] Let 

\vspace{-1mm}

\begin{center}
\begin{tikzpicture}[node distance=2cm, auto]

\node (A) {$A$};
\node (B) [right of =A] {$B$};
\node (C) [right of =B] {$C$};

\draw[->] (A) to node {$a$} (B);
\draw[->] (B) to node {$b$} (C);

\end{tikzpicture}
\end{center}

\noindent be a 1-pasting diagram in $\bs{A}$, and denote by $\sigma$ the following 1-pasting diagram in $\bs{1}$.

\vspace{1mm}

\begin{center}
\begin{tikzpicture}[node distance=2cm, auto]

\node (A) {$\cdot$};
\node (B) [right of =A] {$\cdot$};
\node (C) [right of =B] {$\cdot$};
\node () [node distance=0.5cm, left of =A] {$\sigma = $};

\draw[->] (A) to node {} (B);
\draw[->] (B) to node {} (C);

\end{tikzpicture}
\end{center}
Since $\bs{G}$ is contractible and $(\text{id}_0,\text{id}_0,\sigma)$ is a triple satisfying $g(\text{id}_0)=g(\text{id}_0)=\partial \sigma$, there must exist a 1-cell $x:\text{id}_0 \longrightarrow \text{id}_0$ of $\bs{G}$ satisfying $g(x) = \sigma$. This means that

\vspace{-1em}
\begin{center}
\[ \left(
\begin{tikzpicture}[node distance=2cm, auto, baseline=-1.2mm]

\node (A) {$A$};
\node (B) [right of =A] {$B$};
\node (C) [right of =B] {$C$};
\node (X) [node distance=0.8cm, right of=C] {$\text{id}_0$};
\node (Y) [right of=X] {$\text{id}_0$};
\node (X') [node distance=0.35cm, right of=C] {};
\node () [node distance=0.1cm, below of=X'] {$,$};

\draw[->] (A) to node {$a$} (B);
\draw[->] (B) to node {$b$} (C);
\draw[->] (X) to node {$x$} (Y);

\end{tikzpicture}
\right) \]
\end{center}
is a 1-cell of $\bs{A}_{\bs{G}}$, denoted $( (a,b), x )$. We think of the image $\theta_1((a,b), x) = x(a,b)$ of this 1-cell under the algebra structure map $\theta:\bs{A}_{\bs{G}} \longrightarrow \bs{A}$ as a composite of $a$ and $b$ in $\bs{A}$. 

\begin{center}
\begin{tikzpicture}[node distance=2cm, auto]

\node (P) {$A$};
\node (Q) [node distance=2.5cm, right of =P] {$C$};

\draw[->] (P) to node {$x(a,b)$} (Q);

\end{tikzpicture}
\end{center}

\end{example}

\begin{example}[\textit{Coherence}]

Let $x$ be the 1-cell of $\bs{G}$ described in the previous example and consider a 1-pasting diagram 

\vspace{-1mm}

\begin{center}
\begin{tikzpicture}[node distance=2cm, auto]

\node (A) {$A$};
\node (B) [right of =A] {$B$};
\node (C) [right of =B] {$C$};
\node (D) [right of =C] {$D$};

\draw[->] (A) to node {$a$} (B);
\draw[->] (B) to node {$b$} (C);
\draw[->] (C) to node {$c$} (D);

\end{tikzpicture}
\end{center}
in $\bs{A}$. The operadic composites $x \circ (\text{id}_1,x)$ and $x \circ (x, \text{id}_1)$ are parallel 1-cells of $\bs{G}$ whose images under $g$ are  shown below.

\begin{center}
\begin{tikzpicture}[node distance=2cm, auto]

\node (A) {$\cdot$};
\node (A') [node distance=2.6cm, left of =A] {$g(x \circ (\text{id}_1,x)) = g(x \circ (x, \text{id}_1)) =$};
\node (B) [right of =A] {$\cdot$};
\node (C) [right of =B] {$\cdot$};
\node (D) [right of =C] {$\cdot$};

\draw[->] (A) to node [swap] {} (B);
\draw[->] (B) to node [swap] {} (C);
\draw[->] (C) to node [swap] {} (D);

\end{tikzpicture}
\end{center}

\noindent By the contractibility of $\bs{G}$ there must exists a 2-cell $\varsigma:x \circ (\text{id}_1,x) \longrightarrow x \circ (x, \text{id}_1)$ of $\bs{G}$ satisfying

\vspace{0.5mm}

\begin{center}
\begin{tikzpicture}[node distance=2cm, auto]

\node (A) {$\cdot$};
\node (A') [node distance=0.7cm, left of =A] {$g(\varsigma)=$};
\node (B) [right of =A] {$\cdot$};
\node (C) [right of =B] {$\cdot$};
\node (D) [right of =C] {$\cdot$};

\draw[->] (A) to node [swap] {} (B);
\draw[->] (B) to node [swap] {} (C);
\draw[->] (C) to node [swap] {} (D);

\end{tikzpicture}
\end{center}
where $g(\varsigma)$ is a degenerate 2-pasting diagram in $\bs{1}$; see Definition \ref{degenerate pd}. It follows that 

\vspace{-0.8em}
\begin{center}
\[ \left(
\begin{tikzpicture}[node distance=2cm, auto, baseline=-1.4mm]

\node (A) {$A$};
\node (B) [right of =A] {$B$};
\node (C) [right of =B] {$C$};
\node (D) [right of =C] {$D$};
\node (X) [node distance=0.9cm, right of=D] {$\text{id}_0$};
\node (P'') [node distance=1.5cm, right of =X] {$\Downarrow \varsigma$};
\node (Y) [node distance=3cm, right of=X] {$\text{id}_0$};
\node (X') [node distance=0.4cm, right of=D] {};
\node () [node distance=0.1cm, below of=X'] {$,$};

\draw[->] (A) to node {$a$} (B);
\draw[->] (B) to node {$b$} (C);
\draw[->] (C) to node {$c$} (D);
\draw[->, bend left=35] (X) to node {$x \circ (\text{id}_1,x)$} (Y);
\draw[->, bend right=35] (X) to node [swap] {$x \circ (x, \text{id}_1)$} (Y);

\end{tikzpicture}
\right) \]
\end{center}
is a 2-cell of $\bs{A}_{\bs{G}}$, denoted $( (a,b,c), \varsigma)$. Note that we are now viewing $(a,b,c)$ as a degenerate 2-pasting diagram in $\bs{A}$ rather than a 1-pasting diagram. Using the notation $x(a,b) = ab$, the unit and multiplication axioms for $\bs{G}$-algebras yield the following equalities.

\vspace{-1mm}

$$\big( x \circ (\text{id}_1,x)\big) \ (a,b,c) = x\big(\text{id}_1(a), \ x(b,c) \big) = a(bc)$$

\vspace{-1mm}

$$\big( x \circ (x, \text{id}_1)\big) \ (a,b,c) = x\big(x(a,b), \ \text{id}_1(c) \big) = (ab)c$$

\vspace{1.5mm}

\noindent The image $\theta_2((a,b,c), \varsigma) = \varsigma(a,b,c)$ of the 2-cell of $\bs{A}_{\bs{G}}$ above under the algebra structure map $\theta$ can therefore be thought of as a coherence 2-cell $a(bc) \longrightarrow (ab)c$ in $\bs{A}$.

\begin{center}
\begin{tikzpicture}[node distance=2cm, auto]

\node (P) {$A$};
\node (P') [node distance=1.5cm, right of =P] {$\Downarrow \varsigma (a,b,c)$};
\node (Q) [node distance=3cm, right of =P] {$D$};

\draw[->, bend left=40] (P) to node {$a(bc)$} (Q);
\draw[->, bend right=40] (P) to node [swap] {$(ab)c$} (Q);

\end{tikzpicture}
\end{center}

\end{example}

\begin{example}[\textit{Composition}]

Continuing from the previous example, let

\begin{center}
\begin{tikzpicture}[node distance=2.5cm, auto]

\node (A) {$A$};
\node (B) [right of=A] {$B$};
\node (C) [right of=B] {$C$};
\node (D) [right of=C] {$D$};

\draw[->, bend left=40] (A) to node {$a$} (B);
\draw[->, bend left=40] (B) to node {$b$} (C);
\draw[->, bend left=40] (C) to node {$c$} (D);
\draw[->, bend right=40] (A) to node [swap] {$a'$} (B);
\draw[->, bend right=40] (B) to node [swap] {$b'$} (C);
\draw[->, bend right=40] (C) to node [swap] {$c'$} (D);

\node (W) [node distance=1.25cm, right of=A] {$\Downarrow  \alpha$};
\node (U) [node distance=1.25cm, right of=B] {$\Downarrow  \beta$};
\node (V) [node distance=1.25cm, right of=C] {$\Downarrow  \gamma$};

\end{tikzpicture}
\end{center}

\noindent be a 2-pasting diagram in $\bs{A}$. Since $\bs{G}$ is contractible there must exist a 2-cell $\psi: x \circ (\text{id}_1,x) \longrightarrow x \circ (x, \text{id}_1)$ of $\bs{G}$ such that 

\vspace{-3mm}
\begin{center}
\[ \left(
\begin{tikzpicture}[node distance=2.5cm, auto, baseline=-1.5mm]

\node (A) {$A$};
\node (B) [right of =A] {$B$};
\node (C) [right of =B] {$C$};
\node (D) [right of =C] {$D$};
\node (X) [node distance=0.9cm, right of=D] {$\text{id}_0$};
\node (P'') [node distance=1.5cm, right of =X] {$\Downarrow \psi$};
\node (Y) [node distance=3cm, right of=X] {$\text{id}_0$};
\node (X') [node distance=0.4cm, right of=D] {};
\node () [node distance=0.1cm, below of=X'] {$,$};

\draw[->, bend left=40] (A) to node {$a$} (B);
\draw[->, bend left=40] (B) to node {$b$} (C);
\draw[->, bend left=40] (C) to node {$c$} (D);
\draw[->, bend right=40] (A) to node [swap] {$a'$} (B);
\draw[->, bend right=40] (B) to node [swap] {$b'$} (C);
\draw[->, bend right=40] (C) to node [swap] {$c'$} (D);

\node (W) [node distance=1.25cm, right of=A] {$\Downarrow  \alpha$};
\node (U) [node distance=1.25cm, right of=B] {$\Downarrow  \beta$};
\node (V) [node distance=1.25cm, right of=C] {$\Downarrow  \gamma$};
\draw[->, bend left=35] (X) to node {$x \circ (\text{id}_1,x)$} (Y);
\draw[->, bend right=35] (X) to node [swap] {$x \circ (x, \text{id}_1)$} (Y);

\end{tikzpicture}
\right) \]
\end{center}

\vspace{2mm}

\noindent is a 2-cell of $\bs{A}_{\bs{G}}$, denoted $( (\alpha, \beta, \gamma), \psi)$. The image of this 2-cell under the algebra structure map $\theta$ is a 2-cell $\psi(\alpha, \beta, \gamma):a(bc) \longrightarrow (a'b')c'$ of $\bs{A}$, thought of as a composite of $\alpha$, $\beta$ and $\gamma$.

\end{example}

The coherence condition for $n$-categories is stronger than the coherence condition for $\omega$-categories; in $n$-categories the $n$-cells require special attention.
This is clear when we consider that algebraic definitions of higher category can be broken down into \textit{data} and \textit{axioms}. 
For $n$-categories, the data consists of $k$-cells ($0 \leqslant k \leqslant n$), including specified coherence cells, and composition operations, while the axioms impose various constraints on composition. 
It is thought that for any $n$-pasting diagram in an $n$-category together with a way to compose its $(n-1)$-dimensional boundary, the axioms should ensure that there is a \textit{unique} choice of composite $n$-cell that is consistent with the composition of the boundary.
Moreover, for fully weak $n$-categories the axioms should actually be equivalent to this condition on $n$-cell composition; this is known as the \textit{coherence theorem}.

\begin{coherencethm}\label{the coherence theorem}
The axioms for a fully weak algebraic $n$-category are equivalent to the condition that for any $n$-pasting diagram together with a composition of the $(n-1)$-dimensional boundary, there is a \textit{unique} choice of composite $n$-cell with respect to this composition of the boundary.
\end{coherencethm}

\begin{examples}
The associativity and identity axioms for an ordinary (1-)category are equivalent to the condition that for any sequence $A \longrightarrow \hdots \longrightarrow A'$ of composable 1-cells there is exactly one way to compose it into a single 1-cell $A \longrightarrow A'$. Similarly, the axioms for a bicategory are equivalent to stating that given any 2-pasting diagram together with a way of composing its 1-cell boundary, there is a unique way to compose the pasting diagram into a single 2-cell that is consistent with this composition of the boundary \cite[Section 2.4]{TL4}. Take for example a 2-pasting diagram

\begin{center}
\begin{tikzpicture}[node distance=2.5cm, auto]

\node (A) {$A$};
\node (B) [right of=A] {$B$};
\node (C) [right of=B] {$C$};

\draw[->, bend left=60] (A) to node {$a$} (B);
\draw[->, bend left=60] (B) to node {$b$} (C);

\draw[->] (A) to node {} (B);
\draw[->] (B) to node {} (C);

\draw[->, bend right=60] (A) to node [swap] {$a''$} (B);
\draw[->, bend right=60] (B) to node [swap] {$b''$} (C);

\node (W) [node distance=1.25cm, right of=A] {};
\node (U) [node distance=1.25cm, right of=B] {};

\node (W') [node distance=0.35cm, above of=W] {$\Downarrow  \alpha$};
\node (U') [node distance=0.35cm, above of=U] {$\Downarrow  \beta$};

\node (W'') [node distance=0.35cm, below of=W] {$\Downarrow  \alpha'$};
\node (U'') [node distance=0.35cm, below of=U] {$\Downarrow  \beta'$};

\end{tikzpicture}
\end{center} 

\noindent in a bicategory. The composites $(\alpha \cdot \alpha) \ast (\beta \cdot \beta')$ and $(\alpha' \ast \beta) \cdot (\alpha' \ast \beta')$ both use the same composition of its boundary, and by the interchange law these composites are equal.
\end{examples}

The highest dimension for which there exists a hands-on definition of fully weak algebraic $n$-category is $n=4$; these are tetracategories of Alexander Hoffnung \cite{AH}. The highest dimension for which there exists a hands-on definition of fully weak algebraic $n$-category \textit{together} with a proven coherence theorem is $n=3$; these are the tricategories of Nick Gurski \cite{NG}.

The composition condition for $n$-categories is the same as for $\omega$-categories, we just truncate to $n$-dimensions. The coherence condition, however, is adjusted to include an extra requirement, reflecting the desired properties of $n$-cell composition and its associated coherence discussed above. 

\begin{coherencen}
\leavevmode
\begin{enumerate}[i)]
\item For any two ways of composing the same $k$-pasting diagram ($k < n$) in an $n$-category that agree on the composition of its $(k-1)$-dimensional boundary, there should be a coherence $(k+1)$-cell between the two composites; and
\item any two ways of composing the same $n$-pasting diagram that agree on the composition of its $(n-1)$-dimensional boundary should be equal.
\end{enumerate}
\end{coherencen}

A contraction on an $n$-globular collection is defined analogously to a contraction on a globular collection. However, in keeping with the new coherence condition, there is an extra requirement that an $n$-globular operad must meet in order to be contractible. 

\begin{definition}\label{contractible n-glob ops}
An $n$-globular operad $\bs{G_n}$ with underlying collection $g:\bs{G_n} \longrightarrow \bs{1}^*$ is \textit{contractible} if
\begin{enumerate}[i)]
\item there exists a contraction function on its underlying $n$-globular collection, and
\item any parallel pair $(\chi,\chi')$ of $n$-cells of $\bs{G_n}$ satisfying $g(\chi) = g(\chi')$ are equal.
\end{enumerate}
\end{definition} 

\begin{example} There is a unique contraction on the $n$-globular operad $\bs{T_n}$ for strict $n$-categories; see Example \ref{GlobOp T for strict n cats}. When equipped with this contraction every $k$-cell of $\bs{T_n}$ for $0 < k \leqslant n$ is a contraction cell.
\end{example}

\begin{notation}\label{the category CG-Opn} We denote by $\mathbf{C \mhyphen GOp}_{\bs{n}}$ the category of $n$-globular operads carrying a specified contraction and contraction preserving morphisms.
\end{notation}

\begin{lemma}\label{contractible n-cells} A contractible $n$-globular operad $\bs{G_n}$ is completely determined by its $k$-cells for all $k<n$.
For each triple $(x, x', \tau)$ where $(x,x')$ is a parallel pair of $(n{-}1)$-cells of $\bs{G_n}$ satisfying $g(x) = g(x') = {\partial}\tau$, there exists a \textit{unique} $n$-cell $\chi:x \longrightarrow x'$ of $\bs{G_n}$ satisfying $g(\chi) = \tau$. Similarly, given any $n$-cell $\big((\Lambda_1,...,\Lambda_m), \Lambda\big)$ of $\bs{G_n} \circ \bs{G_n}$ there is a unique choice of composite $n$-cell $\Lambda \circ (\Lambda_1,...,\Lambda_m)$.
\end{lemma}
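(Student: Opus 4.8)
The plan is to prove the three assertions of the lemma in sequence, deriving each from the definition of contractibility for $n$-globular operads (Definition \ref{contractible n-glob ops}) and the operad axioms already unpacked in Section \ref{globop section}.

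\textbf{Step 1: Uniqueness and existence of the top-dimensional contraction cells.} Fix a triple $(x,x',\tau)$ with $(x,x')$ a parallel pair of $(n{-}1)$-cells of $\bs{G_n}$ and $g(x)=g(x')={\partial}\tau$. Existence of an $n$-cell $\chi:x\longrightarrow x'$ with $g(\chi)=\tau$ is immediate from clause (i) of Definition \ref{contractible n-glob ops}: apply the contraction function to $(x,x',\tau)$. For uniqueness, suppose $\chi,\chi':x\longrightarrow x'$ both satisfy $g(\chi)=g(\chi')=\tau$. Since they share source $x$ and target $x'$ they are parallel, and since $g(\chi)=g(\chi')$, clause (ii) of Definition \ref{contractible n-glob ops} forces $\chi=\chi'$. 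This gives the ``unique $n$-cell'' statement.

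\textbf{Step 2: Uniqueness of composites of $n$-cells.} Given an $n$-cell $\big((\Lambda_1,\dots,\Lambda_m),\Lambda\big)$ of $\bs{G_n}\circ\bs{G_n}$, the operad structure map $comp$ produces $\Lambda\circ(\Lambda_1,\dots,\Lambda_m)$, so a composite exists. For uniqueness one argues that \emph{any} $n$-cell of $\bs{G_n}$ realising the required source, target and underlying pasting diagram must coincide with it. The source and target of $\Lambda\circ(\Lambda_1,\dots,\Lambda_m)$ are determined in dimension $n{-}1$ by the data: they are $\partial\Lambda\circ(\text{relevant }\partial\Lambda_i\text{'s})$, which are $(n{-}1)$-cells and hence, by Step 1 applied inductively in lower dimensions together with the first sentence of the lemma (that $\bs{G_n}$ is determined by its cells below dimension $n$), are already pinned down. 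The underlying collection value $g\big(\Lambda\circ(\Lambda_1,\dots,\Lambda_m)\big)$ is the prescribed pasting diagram $\tau\circ(\tau_1,\dots)$. So any two candidate composites are parallel $n$-cells with the same image under $g$, and clause (ii) of Definition \ref{contractible n-glob ops} gives equality.

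\textbf{Step 3: $\bs{G_n}$ is determined by its $k$-cells for $k<n$.} This is the packaging statement, and it follows by combining Steps 1 and 2: the set $(G_n)$ of $n$-cells is in bijection with the set of triples $(x,x',\tau)$ with $(x,x')$ parallel $(n{-}1)$-cells and $g(x)=g(x')=\partial\tau$ — the map sending $\chi$ to $(s\chi,t\chi,g\chi)$ is injective by clause (ii) and surjective by clause (i). The source, target and $g$-maps out of $(G_n)$ are read off from the triple, and the operadic composition and identity involving $n$-cells are then forced by Step 2. Hence all the $n$-dimensional structure is recovered from the data in dimensions $<n$. I would phrase this carefully as: two contractible $n$-globular operads that agree on their underlying $(n{-}1)$-truncated collections, with compatible contractions below dimension $n$, are isomorphic; or more simply, the $n$-cells and all operations on them are uniquely determined.

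\textbf{Main obstacle.} The genuinely delicate point is Step 2 — verifying that the source and target $(n{-}1)$-cells of a purported composite are themselves uniquely determined, so that clause (ii) can be invoked. This requires knowing that operadic composition in dimension $n{-}1$ is already rigid, which is where one must be careful about whether the inductive hypothesis ``determined by lower cells'' is being used legitimately or circularly. The clean way around this is to observe that clause (ii) only ever compares $n$-cells with \emph{equal} images under $g$ and \emph{equal} source and target, so as long as a composite \emph{exists} (which $comp$ guarantees) any other realisation is equal to it; no induction on dimension is actually needed for uniqueness, only for the structural ``determined by'' phrasing, and there the induction is on $n$ itself and is harmless.
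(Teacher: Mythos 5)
Your proof is correct and supplies exactly the verification the paper leaves implicit: Lemma \ref{contractible n-cells} is stated without proof as an immediate consequence of Definition \ref{contractible n-glob ops}, and your three steps (existence from the contraction function, uniqueness from clause (ii), and the bijection $\chi \mapsto (s\chi, t\chi, g\chi)$ packaging the $n$-dimensional structure) are the intended argument. Your resolution of the apparent circularity in Step 2 is also the right one — the source, target and $g$-image of a composite are forced because $comp$ and $g$ are morphisms of ($n$-)globular sets, so clause (ii) applies directly without any induction.
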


\begin{example}
The identity $n$-cell $\text{id}_n$ of a contractible $n$-globular operad is the $n$-cell corresponding to the triple $(\text{id}_{n-1},\text{id}_{n-1},\iota)$, where $\iota$ is the simple $n$-pasting diagram in $\mathbf{1}$; see Definition \ref{trivlal pd}.
\end{example}

\begin{lemma}\label{contractible n-morphisms} Let $\bs{H_n}$ be an $n$-globular operad satisfying the second condition in Definition \ref{contractible n-glob ops}. A morphism $f:\bs{G_n} \longrightarrow \bs{H_n}$ of $n$-globular operads is completely determined by the value of $f$ on the $k$-cells of $\bs{G_n}$ for all $k<n$.
\end{lemma}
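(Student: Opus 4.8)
The plan is to argue by induction on dimension that the values of $f$ on $k$-cells for $k < n$ force the values on $n$-cells, using contractibility of $\bs{G_n}$ together with the second condition of Definition \ref{contractible n-glob ops} applied to the codomain $\bs{H_n}$. The key point is that by Lemma \ref{contractible n-cells}, every $n$-cell $\chi$ of $\bs{G_n}$ arises in a canonical way from lower-dimensional data: either $\chi$ is the unique contraction $n$-cell $\chi : x \longrightarrow x'$ attached to a triple $(x, x', \tau)$ with $x, x'$ parallel $(n{-}1)$-cells of $\bs{G_n}$ and $g(x) = g(x') = {\partial}\tau$, or $\chi$ is an operadic composite $\Lambda \circ (\Lambda_1, \ldots, \Lambda_m)$, and in fact every $n$-cell is the contraction cell of such a triple. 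So it suffices to pin down $f(\chi)$ when $\chi$ is a contraction $n$-cell on $(x, x', \tau)$.

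First I would observe that $f$, being a morphism of $n$-globular operads, commutes with source and target and with the collection maps; hence $f(\chi) : f(x) \longrightarrow f(x')$ is an $n$-cell of $\bs{H_n}$, the pair $(f(x), f(x'))$ is parallel (source and target are preserved), and $h\big(f(\chi)\big) = g(\chi) = \tau$, where $h$ is the underlying collection map of $\bs{H_n}$. Now $f(x)$ and $f(x')$ are already determined, since $x, x'$ are $(n{-}1)$-cells. Next I would invoke the hypothesis that $\bs{H_n}$ satisfies condition (ii) of Definition \ref{contractible n-glob ops}: any parallel pair of $n$-cells of $\bs{H_n}$ with equal image under $h$ are equal. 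Therefore there is \emph{at most one} $n$-cell of $\bs{H_n}$ that is parallel to the pair $\big(f(x), f(x')\big)$ and maps to $\tau$ under $h$; and $f(\chi)$ is such a cell. Consequently $f(\chi)$ is uniquely determined by $f(x)$, $f(x')$, and $\tau$ — i.e., by the restriction of $f$ to cells of dimension $< n$.

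To finish, I would assemble these observations: two morphisms $f, f' : \bs{G_n} \longrightarrow \bs{H_n}$ of $n$-globular operads agreeing on all $k$-cells for $k < n$ must agree on every $n$-cell, because each $n$-cell $\chi$ is a contraction cell on some triple $(x, x', \tau)$ with $x, x'$ of dimension $n-1$, and the argument above shows $f(\chi)$ and $f'(\chi)$ are both the unique $n$-cell of $\bs{H_n}$ parallel to $\big(f(x), f'(x)\big) = \big(f(x), f(x')\big)$ with image $\tau$ under $h$; hence $f(\chi) = f'(\chi)$. (Preservation of identities and of operadic composites is then automatic and need not be checked separately, since those $n$-cells are already covered as contraction cells; but if one prefers, one notes $f(\text{id}_n)$ is forced by $f(\text{id}_{n-1})$ via the triple $(\text{id}_{n-1}, \text{id}_{n-1}, \iota)$, and composites are forced by Lemma \ref{contractible n-cells}.)

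\textbf{Main obstacle.} The only delicate point is making sure that \emph{every} $n$-cell of $\bs{G_n}$ genuinely is a contraction cell on a triple of $(n{-}1)$-cells — that there are no "stray" $n$-cells not of this form — so that restricting to dimension $< n$ really does capture everything. This is exactly the content of Lemma \ref{contractible n-cells} ("completely determined by its $k$-cells for all $k<n$"), which I would cite rather than reprove; given that, the rest is a short diagram-chase using preservation of $s$, $t$, and $g$, plus the uniqueness clause (ii) for $\bs{H_n}$.
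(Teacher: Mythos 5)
The paper states this lemma without proof, but the argument it clearly has in mind is the one contained in your second paragraph, and that part of your proposal is correct: for an $n$-cell $\chi$ of $\bs{G_n}$ with source $x$ and target $x'$, any morphism $f$ of $n$-globular operads preserves sources, targets, and the underlying collection maps, so $f(\chi)$ is an $n$-cell $f(x) \longrightarrow f(x')$ of $\bs{H_n}$ with $h\big(f(\chi)\big) = g(\chi)$; since $f(x)$ and $f(x')$ are fixed by the restriction of $f$ to dimension $n-1$, condition (ii) of Definition \ref{contractible n-glob ops} for $\bs{H_n}$ pins down $f(\chi)$ uniquely. (There is a small typo in your last paragraph: $\big(f(x), f'(x)\big)$ should read $\big(f'(x), f'(x')\big) = \big(f(x), f(x')\big)$.)

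The problem is the scaffolding you build around this. You open by reducing to the case where $\chi$ is a contraction cell on a triple $(x,x',\tau)$, citing Lemma \ref{contractible n-cells}, and you name as the ``main obstacle'' the need to verify that every $n$-cell of $\bs{G_n}$ is such a contraction cell. But the lemma places \emph{no} hypothesis on $\bs{G_n}$ — only $\bs{H_n}$ is assumed to satisfy condition (ii) — so Lemma \ref{contractible n-cells} is not available for $\bs{G_n}$ in general, and as written your proof only covers contractible domains. This reduction is also unnecessary: your paragraph-two argument never actually uses that $\chi$ is a contraction cell, only that it has a source, a target, and an image under $g$, which every $n$-cell has. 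Dropping the appeal to Lemma \ref{contractible n-cells} and applying that argument to an arbitrary $n$-cell gives the lemma in the stated generality; the ``delicate point'' you flag is a non-issue.
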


It is clear that any ($n$-)globular operad defining some sensible theory of higher category should be contractible, as contractibility is precisely the property needed to guarantee the required conditions on composition and coherence in its algebras. There are, however, contractible ($n$-)globular operads whose algebras are equipped more data than higher categories have. Take for example a contractible globular operad $\bs{G}$ with more than one 0-cell. The identity 0-cell $\text{id}_0$ provides the `do nothing' way of composing 0-cells in the algebras of $\bs{G}$. The existence of another 0-cell in $\bs{G}$ would mean that there exists some other non-trivial operation on the 0-cells of its algebras, which is not the case for higher categories.

\section{Presentations for globular operads}\label{presenations section}

Presentations for algebraic structures can be defined in terms of free-forgetful adjunctions and coequalisers. For example, a group presentation is a set $J$ whose elements we call \textit{generators}, a set $R$ whose elements we call \textit{relations} and a pair of functions 

\vspace{1mm}

\begin{center}
\begin{tikzpicture}[node distance=2.5cm, auto]

\node (A) {$UF(J)$};
\node (B) [left of=A] {$R$};

\draw[transform canvas={yshift=0.5ex},->] (B) to node {$e$} (A);
\draw[transform canvas={yshift=-0.5ex},->] (B) to node [swap] {$q$} (A);

\end{tikzpicture}
\end{center}
\noindent where $F$ and $U$ are the adjoint free-forgetful functors $F \dashv U:\mathbf{Grp} \longrightarrow \mathbf{Set}$.
We say that $(J,R,e,q)$ is a presentation for a group $G$ if the coequaliser of the diagram

\vspace{1mm}

\begin{center}
\begin{tikzpicture}[node distance=2.5cm, auto]

\node (A) {$F(J)$};
\node (B) [left of=A] {$F(R)$};

\draw[transform canvas={yshift=0.5ex},->] (B) to node {$e$} (A);
\draw[transform canvas={yshift=-0.5ex},->] (B) to node [swap] {$q$} (A);

\end{tikzpicture}
\end{center}

\noindent is isomorphic to $G$. For instance, $P = \big(\{x \}, \{r \}, e, q\big)$ where $e(r) = \text{id}$ and $q(r) = x^n$ is a presentation for the cyclic group $C_n$ of order $n$.
Presentations for other kinds of algebraic structures, such as abelian groups, monoids and rings are defined similarly by replacing the free group adjunction with the appropriate analogous adjunction. In this section we define presentations for ($n$-)globular operads. These presentations are more complex than those for structures whose underlying data is just a single set. To construct presentations for ($n$-)globular operads, we need to specify generators and relations in each dimension, beginning in dimension 0 and building the presentation inductively.  
Before we begin, we provide a preliminary motivating example for the 1-globular operad for ordinary categories. 

\begin{example}\label{pexample-presentations} The 1-globular operad $\bs{T_1}$ for ordinary (1-)categories (Example \ref{GlobOp T for strict n cats}) is the 1-globular operad with

\begin{list}{$\bullet$}{}

\item a single 0-cell, the identity $\text{id}_0$; and

\item 1-cells consisting of the operadic composites of 1-cells $i_1$ and $h_1$ whose images under the underlying collection map are as follows,

\begin{center}
\begin{tikzpicture}[node distance=2cm, auto]

\node (A) {$\text{id}_0$};
\node (B) [right of =A] {$\text{id}_0$};

\draw[->] (A) to node {$i_1$} (B);

\node (P) [node distance=3cm, right of =B] {$\cdot$};

\node (b) [node distance=0.5cm, right of=B] {};
\node (p) [node distance=0.5cm, left of=P] {};
\draw[|->, dashed] (b) to node {} (p);

\node (A') [node distance=1cm, below of=A] {$\text{id}_0$};
\node (B') [right of =A'] {$\text{id}_0$};

\draw[->] (A') to node {$h_1$} (B');

\node (P') [node distance=3cm, right of =B'] {$\cdot$};
\node (Q') [right of =P'] {$\cdot$};
\node (R') [right of =Q'] {$\cdot$};

\draw[->] (P') to node {} (Q');
\draw[->] (Q') to node {} (R');

\node (b') [node distance=0.5cm, right of=B'] {};
\node (p') [node distance=0.5cm, left of=P'] {};
\draw[|->, dashed] (b') to node {} (p');

\end{tikzpicture}
\end{center}

\noindent subject to the equalities below.
\begin{enumerate}[i)]
\item $h_1 \circ (i_1, \text{id}_1) = \text{id}_1$
\item $h_1 \circ (\text{id}_1, i_1) = \text{id}_1$ 
\item $h_1 \circ (\text{id}_1, h_1) = h_1 \circ (h_1, \text{id}_1)$ 
\end{enumerate}
\end{list}

\end{example}

The example above describes a presentation for $\bs{T_1}$ wherein the 1-cells $i_1$ and $h_1$ are the \textit{1-cell generators}, and the equations they satisfy correspond to the \textit{1-cell relations}. The only 0-cell of $\bs{T_1}$ is the identity 0-cell $\text{id}_0$, so there is no need for any 0-cell generators or relations in the presentation.

An algebra $\theta:\bs{A}_{\bs{T_1}} \longrightarrow \bs{A}$ for $\bs{T_1}$ is precisely an ordinary category with underlying 1-globular set $\bs{A}$.
The generators $i_1$ and $h_1$ provide 1-cell identities and binary composition of 1-cells in $\bs{A}$, respectively: given 1-pasting diagrams
\vspace{-1mm}
\begin{center}
\begin{tikzpicture}[node distance=2cm, auto]

\node (A) {$A$};
\node (B) [right of=A] {$A$};
\node (C) [right of=B] {$B$};
\node (D) [right of=C] {$C$};

\draw[->] (B) to node {$a$} (C);
\draw[->] (C) to node {$b$} (D);

\end{tikzpicture}
\end{center}

\noindent in $\bs{A}$ we write $\theta_1((A), i_1) = i_1(A) = 1_A$ and $\theta_1((a,b), h_1) = h_1(a,b) = ab$. 

\begin{center}
\begin{tikzpicture}[node distance=2.2cm, auto]

\node (A) {$A$};
\node (B) [right of=A] {$A$};
\node (C) [right of=B] {$A$};
\node (D) [right of=C] {$C$};

\draw[->] (A) to node {$1_A$} (B);

\draw[->] (C) to node {$ab$} (D);

\end{tikzpicture}
\end{center}

\noindent The relations ensure that the two identity axioms and the associativity axiom, respectively, are satisfied: given 1-pasting diagrams

\begin{center}
\begin{tikzpicture}[node distance=2cm, auto]

\node (A) {$A$};
\node (B) [right of=A] {$B$};
\node (C) [right of=B] {$A$};
\node (D) [right of=C] {$B$};
\node (E) [right of=D] {$C$};
\node (F) [right of=E] {$D$};

\draw[->] (A) to node {$a$} (B);

\draw[->] (C) to node {$a$} (D);
\draw[->] (D) to node {$b$} (E);
\draw[->] (E) to node {$c$} (F);

\end{tikzpicture}
\end{center}

\noindent in $\bs{A}$, equations i), ii) and iii) yield the following equalities.
$$1_Aa = h_1 \big(i_1(A), \hspace{0.5mm} \text{id}_1(a)\big) = \big(h_1 \circ (i_1,  \text{id}_1)\big) (a) = \text{id}_1(a) = a$$
$$a1_B = h_1 \big(\text{id}_1(a), \hspace{0.5mm} i_1(B)\big) = \big(h_1 \circ (\text{id}_1, i_1 )\big)(a) = \text{id}_1(a) = a$$
$$a(bc) = h_1 \big(\text{id}_1(a), \ h_1(b,c)\big) = \big( h_1 \circ (\text{id}_1, h_1) \big) (a,b,c) =  \big( h_1 \circ (h_1, \text{id}_1) \big) (a,b,c) =  h_1 \big(h_1(a,b), \ \text{id}_1(c)\big) = (ab)c$$

Observe that the generators in the presentation for $\bs{T_1}$ above correspond to the basic composition operations in an ordinary (1-)category, and the relations correspond to the axioms. Once we have formally defined presentations for ($n$-)globular operads we will be able to construct the globular operad for any theory of higher category in an analogous way. 

Before we proceed, it will be helpful to understand the process of freely adjoining cells. Recall that the category $\mathbf{GColl}_{\bs{n}} \cong \mathbf{GSet}_{\bs{n}} / \mathbf{1}^*$ of $n$-globular collections is a slice of a presheaf category (see Definition \ref{GSet_n}), so $\mathbf{GColl}_{\bs{n}}$  is itself a presheaf category \cite[Lemma 1.4.12]{MKPS}, and is therefore complete and cocomplete. In particular, $\mathbf{GColl}_{\bs{n}}$ has pushouts, allowing for the the following construction.

\begin{definition} Let $n$ be a natural number. 
\begin{enumerate}
\item For $k \leqslant n$ the $k$-\textit{ball} $\bs{B_k}$ is the $n$-globular set
\begin{center}
\begin{tikzpicture}[node distance=2.3cm, auto]

\node (A) {$\{0,1\}$};
\node (C) [left of=A] {$. . .$};
\node (D) [left of=C] {$\{0,1\}$};
\node (E) [left of=D] {$ \{ \filledstar \}$};
\node (F) [node distance=2.1cm, left of=E] {$\emptyset$};
\node (G) [node distance=2cm, left of=F] {$. . .$};
\node (H) [node distance=2cm, left of=G] {$\emptyset$};

\draw[transform canvas={yshift=0.5ex},->] (C) to node {\small ${0}$} (A);
\draw[transform canvas={yshift=-0.5ex},->] (C) to node [swap] {\small ${1}$} (A);

\draw[transform canvas={yshift=0.5ex},->] (D) to node {\small ${0}$} (C);
\draw[transform canvas={yshift=-0.5ex},->] (D) to node [swap] {\small ${1}$} (C);

\draw[transform canvas={yshift=0.5ex},->] (E) to node {\small ${0}$} (D);
\draw[transform canvas={yshift=-0.5ex},->] (E) to node [swap] {\small ${1}$} (D);

\draw[transform canvas={yshift=0.5ex},->] (F) to node {} (E);
\draw[transform canvas={yshift=-0.5ex},->] (F) to node [swap] {} (E);

\draw[transform canvas={yshift=0.5ex},->] (G) to node {} (F);
\draw[transform canvas={yshift=-0.5ex},->] (G) to node [swap] {} (F);

\draw[transform canvas={yshift=0.5ex},->] (H) to node {} (G);
\draw[transform canvas={yshift=-0.5ex},->] (H) to node [swap] {} (G);

\end{tikzpicture}
\end{center}
\noindent consisting of a single $k$-cell. Here the arrows labelled $0$ and $1$ represent the constant functions. 

\item For $-1 \leqslant k \leqslant n$ the $k$-\textit{sphere} $\bs{S_{k}}$ is the $n$-globular set
\begin{center}
\begin{tikzpicture}[node distance=2.3cm, auto]

\node (A) {$\{0,1\}$};
\node (C) [left of=A] {$. . .$};
\node (D) [left of=C] {$\{0,1\}$};
\node (E) [left of=D] {$\{0,1\}$};
\node (F) [node distance=2.1cm, left of=E] {$\emptyset$};
\node (G) [node distance=2cm, left of=F] {$. . .$};
\node (H) [node distance=2cm, left of=G] {$\emptyset$};

\draw[transform canvas={yshift=0.5ex},->] (C) to node {\small ${0}$} (A);
\draw[transform canvas={yshift=-0.5ex},->] (C) to node [swap] {\small ${1}$} (A);

\draw[transform canvas={yshift=0.5ex},->] (D) to node {\small ${0}$} (C);
\draw[transform canvas={yshift=-0.5ex},->] (D) to node [swap] {\small ${1}$} (C);

\draw[transform canvas={yshift=0.5ex},->] (E) to node {\small ${0}$} (D);
\draw[transform canvas={yshift=-0.5ex},->] (E) to node [swap] {\small ${1}$} (D);

\draw[transform canvas={yshift=0.5ex},->] (F) to node {} (E);
\draw[transform canvas={yshift=-0.5ex},->] (F) to node [swap] {} (E);

\draw[transform canvas={yshift=0.5ex},->] (G) to node {} (F);
\draw[transform canvas={yshift=-0.5ex},->] (G) to node [swap] {} (F);

\draw[transform canvas={yshift=0.5ex},->] (H) to node {} (G);
\draw[transform canvas={yshift=-0.5ex},->] (H) to node [swap] {} (G);

\end{tikzpicture}
\end{center}
\noindent consisting of a parallel pair of $k$-cells. Here it understood that the (-1)-sphere is the empty $n$-globular set.
\end{enumerate}
\end{definition}

Let $J_k$ be a set equipped with a function $f:J_k \longrightarrow \mathbf{GSet}_{\bs{n}} (\bs{B_k}, \bs{1}^*)$. Such a function corresponds to a map $\bar{f}:J_k \cdot \bs{B_k} \longrightarrow \bs{1}^*$ of $n$-globular sets, where $J_k \cdot \bs{B_k}$ denotes the coproduct of $|J_k|$ copies of  $\bs{B_k}$, under the adjunction
\vspace{-1mm}
\begin{center}
\begin{tikzpicture}[node distance=3cm, auto]

\node (A) {$\mathbf{Set}$};
\node (B) [right of=A] {$\mathbf{GSet}_{\bs{n}}$.};

\draw[->, bend left=38] (A) to node {$- \cdot \bs{B_k}$} (B);
\draw[->, bend left=38] (B) to node {$\mathbf{GSet}_{\bs{n}}(\bs{B_k}, -)$} (A);

\node (W) [node distance=1.5cm, right of=A] {$\perp$};

\end{tikzpicture}
\end{center}
Given an $n$-globular collection $g:\bs{G_n} \longrightarrow \bs{1}^*$ together with a function $\partial : J_k \longrightarrow \mathbf{GSet}_{\bs{n}}( \bs{S_{k-1}}, \bs{G_n})$ satisfying the commuativity of
\vspace{-1mm}
\begin{center}
\begin{tikzpicture}[node distance=5cm, auto]

\node (A) {$J_k$};
\node (B) [right of=A] {$\mathbf{GSet}_{\bs{n}} (\bs{B_k}, \bs{1}^*)$};
\node (X) [node distance=1.9cm, below of=A] {$\mathbf{GSet}_{\bs{n}}( \bs{S_{k-1}}, \bs{G_n})$};
\node (Y) [right of=X] {$\mathbf{GSet}_{\bs{n}}( \bs{S_{k-1}}, \bs{1}^*)$};

\draw[->] (A) to node {} (B);
\draw[->] (A) to node [swap] {$\partial$} (X);
\draw[->] (X) to node [swap] {$g \cdot -$} (Y);
\draw[->] (B) to node {$- \cdot i_k$} (Y);

\end{tikzpicture}
\end{center} 
where $i_k : \bs{S_{k-1}} \longrightarrow \bs{B_k}$ denotes the inclusion map, we define $\bs{J_k}$ via the pushout
\vspace{1mm}
\begin{center}
\begin{tikzpicture}[node distance=3.5cm, auto]

\node (A) {$J_k \cdot \bs{S_{k-1}}$};
\node (B) [right of=A] {$J_k \cdot \bs{B_k}$};
\node (X) [node distance=1.9cm, below of=A] {$\bs{G_n}$};
\node (Y) [right of=X] {$\bs{J_k}$};

\draw[->] (A) to node {$J_k \cdot i_k$} (B);
\draw[->] (A) to node [swap] {$\bar{\partial}$} (X);
\draw[->] (X) to node [swap] {} (Y);
\draw[->] (B) to node {} (Y);

\end{tikzpicture}
\end{center} 

\noindent in $\mathbf{GColl}_{\bs{n}}$. The commuativity of the first square above says that $\partial : J_k \cdot \bs{S_{k-1}} \longrightarrow \bs{G_n}$ is a map of $n$-globular collections, rather than just a map of $n$-globular sets. We think of $\bs{J_k}$ as the $n$-globular collection obtained by adjoining a set $J_k$ of $k$-cells to $\bs{G_n}$ whose sources and targets are specified by $\partial$. 
This method of freely adjoining cells can be extended to $n$-globular operads using the following proposition.

\begin{proposition}\cite[Chapter 9.3]{BaWe}
If $T=(T, \mu, \eta)$ is a finitary monad on a complete and cocomplete category then the category $T \mhyphen \mathbf{Alg}$ of algebras for $T$ is also complete and cocomplete.
\end{proposition}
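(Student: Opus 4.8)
Write $\mathcal{C}$ for the ambient complete and cocomplete category on which $T$ acts. The plan is to treat completeness and cocompleteness separately, since they require quite different arguments and only the latter uses the finitariness hypothesis. \emph{Completeness} is easy: for any monad $T$ on any category, the forgetful functor $U\colon T\mhyphen\mathbf{Alg}\longrightarrow\mathcal{C}$ creates limits. Given a small diagram in $T\mhyphen\mathbf{Alg}$, form the limit $L$ of the underlying diagram in $\mathcal{C}$; applying $T$ to this limit cone and composing with the algebra structure maps of the diagram yields, by the universal property of $L$, a unique map $TL\to L$, which one checks is a $T$-algebra structure making $L$ the limit in $T\mhyphen\mathbf{Alg}$. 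Since $\mathcal{C}$ is complete, so is $T\mhyphen\mathbf{Alg}$; nothing is required of $T$ here.

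\emph{Reduction of cocompleteness to coequalizers.} A category with all small coproducts and all coequalizers is cocomplete (build an arbitrary colimit as a coequalizer of maps between coproducts), so it suffices to produce these two classes of colimits in $T\mhyphen\mathbf{Alg}$. I would obtain coequalizers first --- the substantive step, below --- and then bootstrap coproducts: the free functor $F$, being a left adjoint, preserves coproducts, so coproducts of free algebras exist and are again free; every algebra $(A,a)$ is the reflexive coequalizer of its canonical bar presentation $FTA\rightrightarrows FA$; and a short hom-set computation then shows that, once reflexive coequalizers are available, $\coprod_i(A_i,a_i)$ can be formed as the coequalizer of the reflexive pair $F\!\big(\coprod_i TA_i\big)\rightrightarrows F\!\big(\coprod_i A_i\big)$. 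This reduces the proposition to the existence of coequalizers in $T\mhyphen\mathbf{Alg}$.

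\emph{Coequalizers --- the main obstacle.} Let $f,g\colon(A,a)\rightrightarrows(B,b)$ be algebra maps. Their coequalizer $q\colon B\to Q$ formed in $\mathcal{C}$ need not carry a $T$-algebra structure compatible with $b$, since $b\colon TB\to B$ need not factor through $Tq$; worse, the coequalizer in $T\mhyphen\mathbf{Alg}$ is in general a further quotient, forced by the interaction of the new identifications with the algebra structure. The remedy, carried out in \cite[Chapter~9.3]{BaWe}, is a transfinite construction: one builds an $\omega$-indexed chain $B=B_0\to B_1\to B_2\to\cdots$ in $\mathcal{C}$ in which each $B_{n+1}$ is a coequalizer in $\mathcal{C}$ repairing, one level at a time, the failure of the previous stage to be the coequalizer in $T\mhyphen\mathbf{Alg}$, and then passes to $B_\omega=\operatorname{colim}_n B_n$. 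Finitariness of $T$ is precisely what makes this work: $T$ preserves the sequential colimit, so $TB_\omega\cong\operatorname{colim}_n TB_n$ and the maps $TB_n\to B_{n+1}\to B_\omega$ assemble into a structure map $TB_\omega\to B_\omega$, and the chain ``stabilises'' after $\omega$ steps. One then verifies that $B_\omega$ with this structure, together with the evident map from $(B,b)$, has the universal property of the coequalizer in $T\mhyphen\mathbf{Alg}$. The transfinite bookkeeping --- in particular the proof that $\omega$ iterations suffice for a finitary monad (for a monad accessible of rank $\kappa$ one iterates to $\kappa$ instead) --- is the hard part, and I would follow \cite[Chapter~9.3]{BaWe} rather than reproduce it.

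\emph{Remark.} Under stronger hypotheses on $\mathcal{C}$, for instance local presentability, one could instead invoke that a finitary (indeed accessible) monad on a locally presentable category has locally presentable --- hence complete and cocomplete --- category of algebras; but the transfinite argument above is the appropriate tool under the bare hypotheses stated.
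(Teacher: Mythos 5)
Your proposal is correct and is essentially the same approach as the paper's, which offers no proof of its own and simply cites \cite[Chapter 9.3]{BaWe} for this fact. Your outline --- limits created by the forgetful functor, cocompleteness reduced to coequalizers and coproducts with coproducts recovered from reflexive coequalizers of free algebras, and the $\omega$-chain construction of coequalizers using that a finitary $T$ preserves the sequential colimit --- is precisely the standard argument in that reference, so deferring the transfinite bookkeeping there is appropriate.
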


It shown in \cite[Chapter 6.5]{TL} that there is a free functor $F:\mathbf{GColl}_{\bs{n}} \longrightarrow \mathbf{GOp}_{\bs{n}}$ left adjoint to canonical forgetful functor, the adjunction  $F \dashv U:\mathbf{GOp}_{\bs{n}} \longrightarrow \mathbf{GColl}_{\bs{n}}$ is monadic, and that the monad induced on $\mathbf{GColl}_{\bs{n}}$ is finitary. It follows that $\mathbf{GOp}_{\bs{n}}$ is complete and cocomplete, in particular, it has pushouts. We can now define, for any $n$-globular operad $\bs{G_n}$ together with a morphism $\bar{\partial} : J_k \cdot \bs{S_{k-1}} \longrightarrow U(\bs{G_n})$ of $n$-globular collections, an $n$-globular operad $\bs{J_k}$ via the pushout in $\mathbf{GOp}_{\bs{n}}$ below.

\begin{center}
\begin{tikzpicture}[node distance=2cm, auto]

\node (A) {$J_k \cdot F(\bs{S_{k-1}})$};
\node (B) [node distance=4.5cm, right of=A] {$J_k \cdot F(\bs{B_k})$};
\node (X) [below of=A] {$\bs{G_n}$};
\node (Y) [node distance=4.5cm, right of=X] {$\bs{J_k}$};

\draw[->] (A) to node {$J_k \cdot F(i_k)$} (B);
\draw[->] (A) to node [swap] {$\hat{\partial}$} (X);
\draw[->] (X) to node [swap] {} (Y);
\draw[->] (B) to node {} (Y);

\end{tikzpicture}
\end{center}

We will also make use of the following truncation functors.

\begin{definition}\label{truncationfunctor} Let $n$ be a natural number and let $k$ be an integer with $-1 \leqslant k \leqslant n$. The $k^{th}$ \textit{truncation functor} $Tr_k:\mathbf{GOp}_{\bs{n}} \longrightarrow \mathbf{GOp}_{\bs{n}}$
is the functor sending an $n$-globular operad $\bs{G_n}$ to the $n$-globular operad whose $j$-cells are those of $\bs{G_n}$ for all $j \leqslant k$ and whose only $m$-cell is the identity $m$-cell $\text{id}_m$ for all $m > k$.
\end{definition}

\begin{notation}\label{counit} Let $\epsilon_k:Tr_k \Rightarrow 1$ denote the natural transformation whose components are the inclusions.
\end{notation}

We are now ready to define presentations for $n$-globular operads. 
For each integer $k$, $-1 \leqslant k \leqslant n$, we define a category $\bs{k} \mhyphen \bf{Pres}$ of $k$-presentations for $n$-globular operads together with an adjunction
\begin{center}
\begin{tikzpicture}[node distance=3cm, auto]

\node (A) {$\bs{k} \mhyphen \mathbf{Pres}$};
\node (B) [right of=A] {$\mathbf{GOp}_{\bs{n}}$};

\draw[->, bend left=40] (A) to node {$F_k$} (B);
\draw[->, bend left=40] (B) to node {$V_k$} (A);

\node (W) [node distance=1.5cm, right of=A] {$\perp$};

\end{tikzpicture}
\end{center}
satisfying $F_kV_k=Tr_k$ and with counit $\epsilon_k$. These definitions are recursive; for $k=-1$, we define $\bs{k} \mhyphen \mathbf{Pres}$ to be the terminal category and $F_{-1}$ to be the functor picking out the initial $n$-globular operad $\bs{1}$ consisting only of the identity cells. For $k \geqslant 0$, we construct $\bs{k} \mhyphen \bf{Pres}$ and the accompanying adjunction below.

\begin{definition}
A \textit{$k$-presentation for an $n$-globular operad} is a tuple $P_k = (P_{k-1}, J_k, \partial_k, R_k, r_k)$ where 

\begin{list}{$\bullet$}{}

\item $P_{k-1}$ is a $(k{-}1)$-presentation;

\item $J_k$ is a set of \textit{$k$-cell generators}, equipped with a function $J_k \longrightarrow \mathbf{GSet}_{\bs{n}}(\bs{B_k}, \bs{1}^*)$;

\item $\partial_k : J_k \longrightarrow \mathbf{GSet}_{\bs{n}}(\bs{S_{k-1}}, UF_{k-1}(P_{k-1}))$ is a function for which the square 
\vspace{1mm}
\begin{center}
\begin{tikzpicture}[node distance=2cm, auto]

\node (A) {$J_k$};
\node (B) [node distance=5cm,right of=A] {$\mathbf{GSet}_{\bs{n}}(\bs{B_k}, \bs{1}^*)$};
\node (X) [below of=A] {$\mathbf{GSet}_{\bs{n}}(\bs{S_{k-1}}, UF_{k-1}(P_{k-1}))$};
\node (Y) [node distance=5cm, right of=X] {$\mathbf{GSet}_{\bs{n}}(\bs{S_{k-1}}, \bs{1}^*)$};

\draw[->] (A) to node {} (B);
\draw[->] (A) to node [swap] {$\partial_k$} (X);
\draw[->] (X) to node [swap] {} (Y);
\draw[->] (B) to node {$- \cdot i_k$} (Y);

\end{tikzpicture}
\end{center} 
commutes, where the bottom map is induced by the underlying collection map $UF_{k-1}(P_{k-1}) \longrightarrow \bs{1}^*$ of the $n$-globular operad $F_{k-1}(P_{k-1})$;

\item $R_k$ is a set of \textit{$k$-cell relations}; and 

\item $r_k : R_k \longrightarrow \mathbf{GSet}_{\bs{n}}(\bs{S_k}, U(\bs{J_k}))$ is a function, where $\bs{J_k}$ is defined via the pushout
\begin{center}
\begin{tikzpicture}[node distance=2cm, auto]

\node (A) {$J_k \cdot F(\bs{S_{k-1}})$};
\node (B) [node distance=4.5cm,right of=A] {$J_k \cdot F(\bs{B_k})$};
\node (X) [below of=A] {$F_{k-1}(P_{k{-}1})$};
\node (Y) [node distance=4.5cm, right of=X] {$ \ \bs{J_k} $,};

\draw[->] (A) to node {$J_k \cdot F(i_k)$} (B);
\draw[->] (A) to node [swap] {$\hat{\partial}_k$} (X);
\draw[->] (X) to node [swap] {$w_k$} (Y);
\draw[->] (B) to node {$w'_k$} (Y);

\end{tikzpicture}
\end{center} 
in $\mathbf{GOp}_{\bs{n}}$, for which there exists a (necessarily unique) function $R_k \longrightarrow \mathbf{GSet}_{\bs{n}}(\bs{B_k}, \bs{1}^*)$ such the square below commutes. 
\begin{center}
\begin{tikzpicture}[node distance=2cm, auto]

\node (A) {$R_k$};
\node (B) [node distance=4.5cm,right of=A] {$\mathbf{GSet}_{\bs{n}}(\bs{B_k}, \bs{1}^*)$};
\node (X) [below of=A] {$\mathbf{GSet}_{\bs{n}}(\bs{S_k}, U(\bs{J_k}))$};
\node (Y) [node distance=4.5cm, right of=X] {$\mathbf{GSet}_{\bs{n}}(\bs{S_k}, \bs{1}^*)$};

\draw[->] (A) to node {} (B);
\draw[->] (A) to node [swap] {$r_k$} (X);
\draw[->] (X) to node [swap] {} (Y);
\draw[->] (B) to node {$- \cdot \Delta_k$} (Y);

\end{tikzpicture}
\end{center} 
Here $\Delta_k : \bs{S_k} \longrightarrow \bs{B_k}$ denotes the map sending both $k$-cells of $\bs{S_k}$ to the unique $k$-cell of $\bs{B_k}$. 
\end{list}

\end{definition}

\begin{definition}\label{Morphisms of $k$-presentations.}  
A \textit{morphism $P_k \longrightarrow P'_k$ of $k$-presentations} is a triple $(\pi,\varrho,\rho)$ where 
$\pi:P_{k-1} \longrightarrow P'_{k-1}$ is a morphism of $(k{-}1)$-presentations and
$\varrho:J_k \longrightarrow J'_k$ and $\rho:R_k \longrightarrow R'_k $ are functions, satisfying the commutativity of the following diagrams.
\vspace{-3mm}
\begin{center}
\begin{tikzpicture}[node distance=4cm, auto]

\node (A) {$J_k$};
\node (B) [node distance=3cm, right of=A] {$J'_k $};
\node (P) [node distance=1.6cm, below of=A] {};
\node (Q) [node distance=1.5cm, right of=P] {$\mathbf{GSet}_{\bs{n}}(\bs{B_k}, \bs{1}^*)$};

\draw[->] (A) to node {$\varrho$} (B);
\draw[->] (A) to node [swap] {} (Q);
\draw[->] (B) to node {} (Q);

\node (X') [node distance=4.5cm, right of=Q] {};

\node (X'') [node distance=2mm, below of=X'] {$\mathbf{GSet}_{\bs{n}} (\bs{S_{k-1}}, UF_{k-1}(P_{k-1}))$};
\node (A') [node distance=2cm, above of=X''] {$J_k$};
\node (B') [node distance=7.5cm, right of=A'] {$J'_k$};
\node (Y') [node distance=7.5cm, right of=X''] {$\mathbf{GSet}_{\bs{n}} (\bs{S_{k-1}}, UF_{k-1}(P'_{k-1}))$};

\draw[->] (A') to node {$\varrho$} (B');
\draw[->] (A') to node [swap] {$\partial_k$} (X'');
\draw[->] (X'') to node [swap] {$UF_{k-1}(\pi) \cdot -$} (Y');
\draw[->] (B') to node {$\partial'_k$} (Y');

\end{tikzpicture}
\end{center}

\begin{center}
\begin{tikzpicture}[node distance=4cm, auto]

\node (A) {$R_k$};
\node (B) [node distance=3cm, right of=A] {$R'_k $};
\node (P) [node distance=1.6cm, below of=A] {};
\node (Q) [node distance=1.5cm, right of=P] {$\mathbf{GSet}_{\bs{n}}(\bs{B_k}, \bs{1}^*)$};

\draw[->] (A) to node {$\rho$} (B);
\draw[->] (A) to node [swap] {} (Q);
\draw[->] (B) to node {} (Q);

\node (A') [node distance=2.5cm, right of=B] {};

\node (A'') [node distance=2mm, above of=A'] {$R_k$};
\node (B') [node distance=5.5cm, right of=A''] {$R'_k$};
\node (X') [node distance=2cm, below of=A''] {$\mathbf{GSet}_{\bs{n}} (\bs{S_k}, U(\bs{J_k}) )$};
\node (Y') [node distance=5.5cm, right of=X'] {$\mathbf{GSet}_{\bs{n}} (\bs{S_k}, U(\bs{J}'_{\bs{k}}))$};

\draw[->] (A'') to node {$\rho$} (B');
\draw[->] (A'') to node [swap] {$r_k$} (X');
\draw[->] (X') to node [swap] {$U(\bs{\varrho}) \cdot -$} (Y');
\draw[->] (B') to node {$r'_k$} (Y');

\end{tikzpicture}
\end{center}
The map $\bs{\varrho}:\bs{J_k} \longrightarrow \bs{J}'_{\bs{k}}$ above is defined using the universal property of pushouts.

\end{definition}

\begin{definition}\label{The free pres functor.} 
The \textit{free functor} $F_k: \bs{k} \mhyphen \mathbf{Pres} \longrightarrow \mathbf{GOp}_{\bs{n}}$ sends a $k$-presentation $P_k = (P_{k-1}, J_k, \partial_k, R_k, r_k)$ to the coequaliser
\begin{center}
\begin{tikzpicture}[node distance=2.5cm, auto]

\node (A) {$F_{k}(P_k)$};
\node (B) [left of=A] {$\bs{J_k}$};
\node (C) [node distance=3cm, left of=B] {$R_k \cdot F(\bs{B_k})$};

\draw[transform canvas={yshift=0.5ex},->] (C) to node {$\hat{e}_k$} (B);
\draw[transform canvas={yshift=-0.5ex},->] (C) to node [swap] {$\hat{q}_k$} (B);

\draw[->] (B) to node {} (A);

\end{tikzpicture}
\end{center}
where $e_k, q_k : R_k \longrightarrow \mathbf{GSet}_{\bs{n}}(\bs{B_k}, U(\bs{J_k}))$ are the pair of maps induced by the function $r_k : R_k \longrightarrow \mathbf{GSet}_{\bs{n}}(\bs{S_k}, U(\bs{J_k}))$.
The value of $F_k$ on morphisms is defined using the universal property of coequalisers.
\end{definition}

\begin{definition} The \textit{forgetful functor} $V_k: \mathbf{GOp}_{\bs{n}} \longrightarrow \bs{k} \mhyphen \mathbf{Pres}$ sends an $n$-globular operad $\bs{G_n}$ to the $k$-presentation $V_k(\bs{G_n})=\left(V_{k-1}(\bs{G_n}), \, J_{(\bs{G_n},k)}, \, \partial_{(\bs{G_n},k)}, \, R_{(\bs{G_n},k)}, \,  r_{(\bs{G_n},k)} \right)$
defined as follows.

\begin{list}{$\bullet$}{}

\item $J_{(\bs{G_n},k)} = \mathbf{GSet}_{\bs{n}}(\bs{B_k}, U(\bs{G_n}))$ is the set of $k$-cells of $\bs{G_n}$.

\item $\partial_{(\bs{G_n},k)}$ is the function $- \cdot i_k : \mathbf{GSet}_{\bs{n}}(\bs{B_k}, U(\bs{G_n})) \longrightarrow \mathbf{GSet}_{\bs{n}}(\bs{S_{k-1}}, UTr_{k-1}(\bs{G_n}))$; here we are using the fact that $F_{k-1}V_{k-1} = Tr_{k-1}$ is the $k^{th}$ truncation functor (see Definition \ref{truncationfunctor}).

\item $R_{(\bs{G_n},k)}$ is the pullback object
\begin{center}
\begin{tikzpicture}[node distance=2.4cm, auto]

\node (P) {$R_{(\bs{G_n},k)}$};
\node (Q) [node distance=7cm,right of=P] {$\mathbf{GSet}_{\bs{n}}\big(\bs{B_k}, U\big(\bs{J}_{(\bs{G_n, k})}\big)\big)$};
\node (U) [below of=P] {$\mathbf{GSet}_{\bs{n}}\big(\bs{B_k}, U\big(\bs{J}_{(\bs{G_n, k})}\big)\big)$};
\node (V) [node distance=7cm, right of=U] {$\mathbf{GSet}_{\bs{n}}(\bs{B_k}, U(\bs{G_n}))$};

\draw[->] (P) to node {$e_{(\bs{G_n},k)}$} (Q);
\draw[->] (P) to node [swap] {$q_{(\bs{G_n},k)}$} (U);
\draw[->] (U) to node [swap] {$U(\bs{\epsilon_k}) \cdot -$} (V);
\draw[->] (Q) to node {$U(\bs{\epsilon_k}) \cdot -$} (V);

\end{tikzpicture}
\end{center}
where $\bs{\epsilon_k}$ is the unique morphism satisfying commutativity of the diagram below.
\vspace{1mm}
\begin{center}
\begin{tikzpicture}[node distance=2.2cm, auto]

\node (P) {$J_{(\bs{G_n},k)} \cdot F(\bs{S_{k-1}})$};
\node (Q) [node distance=6cm,right of=P] {$J_{(\bs{G_n},k)} \cdot F(\bs{B_k})$};
\node (U) [below of=P] {$F_{k-1}V_{k-1}(\bs{G_n})$};
\node () [node distance=2.2cm, left of=U] {$Tr_{k-1}(\bs{G_n}) = $};
\node (V) [node distance=6cm, right of=U] {$\bs{J}_{(\bs{G_n, k})}$};
\node (v) [node distance=2.2cm, right of=V] {};
\node (D) [node distance=1.5cm, below of=v] {$\bs{G_n}$};

\draw[->] (P) to node {$J_{(\bs{G_n},k)} \cdot F(i_k)$} (Q);
\draw[->] (P) to node [swap] {$\hat{\partial}_{(\bs{G_n},k)}$} (U);
\draw[->] (U) to node [swap] {$w_k$} (V);
\draw[->] (Q) to node {$w'_k$} (V);
\draw[->, bend right=16] (U) to node [swap] {$\epsilon_{k-1}$} (D);
\draw[->, bend left] (Q) to node {} (D);
\draw[->, dashed] (V) to node [swap] {$\bs{\epsilon_k}$} (D);

\end{tikzpicture}
\end{center}
\vspace{1mm}
The outer arrows of this diagram commute since the components of the natural transformation $\epsilon_{k-1}$ are the inclusion morphisms; see Notation \ref{counit}. 

\item $r_{(\bs{G_n},k)}$ is the function $R_{(\bs{G_n},k)} \longrightarrow \mathbf{GSet}_{\bs{n}}\big(\bs{S_k}, U\big(\bs{J}_{(\bs{G_n, k})}\big)\big)$ induced by the pair of functions $e_{(\bs{G_n},k)}$ and $q_{(\bs{G_n},k)}$ above.

\end{list}

\noindent The value of $V_k$ on morphisms of $n$-globular operads is defined similarly.

\end{definition} 

\begin{proposition} The composite $F_kV_k$ is isomorphic to the $k^{th}$ truncation functor $Tr_k$. 
\end{proposition}

\begin{proof}
For any $n$-globular operad $\bs{G_n}$ the diagram
\begin{center}
\begin{tikzpicture}[node distance=4cm, auto]

\node (B) {$\bs{J}_{(\bs{G_n,k})}$};
\node (C) [left of=B] {$R_{(\bs{G_n},k)} \cdot F(\bs{B_k})$};
\node (A) [node distance=3.2cm, right of=B] {$Tr_k(\bs{G_n})$};

\draw[transform canvas={yshift=0.75ex},->] (C) to node {$\hat{e}_{(\bs{G_n},k)}$} (B);
\draw[transform canvas={yshift=-0.25ex},->] (C) to node [swap] {$\hat{q}_{(\bs{G_n},k)}$} (B);
\draw[->] (B) to node {$\bs{\epsilon_k}$} (A);

\end{tikzpicture}
\end{center}
is a coequaliser in $\mathbf{GOp}_{\bs{n}}$ by construction, so it follows from Definition \ref{The free pres functor.} that $Tr_k(\bs{G_n})$ is isomorphic to $F_kV_k(\bs{G_n})$.
\end{proof}

\begin{proposition} The free functor $F_k$ is left adjoint to the forgetful functor $V_k$, and the counit of the adjunction is $\epsilon_k:Tr_k  \Rightarrow 1$.
\end{proposition}

\begin{proof} We prove the proposition by constructing a unit $\eta_k:1 \longrightarrow V_kF_k$ such that $\eta_k$ and $\epsilon_k$ satisfy the triangle identities. Let $P_k = (P_{k-1}, J_k, \partial_k, R_k, r_k)$ be a $k$-presentation. By Definition \ref{Morphisms of $k$-presentations.}, a morphism $\eta_k:P_k \longrightarrow V_kF_k(P_k)$ of $k$-presentations consists of three compatible maps
\vspace{2mm}
\begin{enumerate}
\item $\eta^1_k:P_{k-1} \longrightarrow V_{k-1}F_k(P_k)$; 
\item $\eta^2_k:J_k \longrightarrow J_{(F_k(P_k),k)}$; and 
\item $\eta^3_k:R_k \longrightarrow R_{(F_k(P_k),k)} $.
\end{enumerate}
\vspace{2mm}
Define $\eta_k^1$ to be the morphism corresponding under adjunction to the composite

\begin{center}
\begin{tikzpicture}[node distance=2.3cm, auto]

\node (A) {$F_{k-1}(P_{k-1})$};
\node (B) [node distance=2.7cm, right of=A] {$ \bs{J_k}$};
\node (C) [right of=B] {$F_k(P_k)$.};

\draw[->] (A) to node {$w_k$} (B);
\draw[->] (B) to node {} (C);

\end{tikzpicture}
\end{center}

\noindent Next, observe that since $J_{(F_k(P_k),k)}=\mathbf{GSet}_{\bs{n}}(\bs{B_k}, UF_k(P_k))$ the composite 

\begin{center}
\begin{tikzpicture}[node distance=2.6cm, auto]

\node (A) {$J_k \cdot \bs{B_k}$};
\node (B) [right of=A] {$U(\bs{J_k})$};
\node (C) [right of=B] {$UF_k(P_k)$};

\draw[->] (A) to node {$w'_k$} (B);
\draw[->] (B) to node {} (C);

\end{tikzpicture}
\end{center}

\noindent is equivalently a function $\eta_k^2:J_k \longrightarrow J_{(F_k(P_k),k)}$ of sets. Finally, let $\bs{\eta_k}$ be the unique morphism satisfying the commutativity of the diagram

\begin{center}
\resizebox{4.5in}{!}{
\begin{tikzpicture}[node distance=2.5cm, auto]

\node (P) {$J_k \cdot F(\bs{S_{k-1}})$};
\node (Q) [node distance=4.5cm,right of=P] {$J_k \cdot F(\bs{B_k})$};
\node (L) [node distance=1.9cm, below of=Q, right of=Q] {$J_{(F_k(P_k),k)} \cdot F(\bs{B_k})$};
\node (U) [below of=P] {$F_{k-1}(P_{k{-}1})$};
\node () [node distance=2.4cm, left of=U] {$Tr_{k-1}F_{k-1}(P_{k{-}1}) =$};
\node (A) [node distance=1.9cm, below of=U, right of=U] {$Tr_{k-1}F_k(P_k)$};
%\node () [node distance=2.5cm, left of=A] {$Tr_{k-1}F_k(P_k) =$};
\node (V) [node distance=4.5cm, right of=U] {$\bs{J_k}$};
\node (D) [node distance=1.9cm, below of=V, right of=V] {$\bs{J}_{(F_k(P_k),\bs{k})}$};

\node () [node distance=1cm, below of=U] {$Tr_{k-1}(\eta_k^1) \ $};

\draw[->] (P) to node {$J_k \cdot F(i_k)$} (Q);
\draw[->] (P) to node [swap] {$\hat{\partial}_k$} (U);
\draw[->] (U) to node [swap] {$w_k$} (V);
\draw[->] (Q) to node [swap] {$w'_k$} (V);
\draw[->] (Q) to node {$\eta_k^2 \cdot F(1)$} (L);
\draw[->] (U) to node [swap] {} (A);
\draw[->, dashed] (V) to node [swap] {$\bs{\eta_k}$} (D);
\draw[->] (L) to node {} (D);
\draw[->] (A) to node [swap] {} (D);

\end{tikzpicture}
}
\end{center}

\noindent and define $\eta^3_k$ to be the unique morphism making the diagram below commute.

\begin{center}
\resizebox{5.6in}{!}{
\begin{tikzpicture}[node distance=2.5cm, auto]

\node (A) {$R_{(F_k(P_k),k)}$};
\node (B) [node distance=7cm, right of=A] {$\mathbf{GSet}_{\bs{n}} \big( \bs{B_k}, U\big(\bs{J}_{(F_k(P_k),\bs{k})}\big) \big)$};
\node (X) [below of=A] {$\mathbf{GSet}_{\bs{n}} \big( \bs{B_k}, U\big(\bs{J}_{(F_k(P_k),\bs{k})}\big) \big)$};
\node (Y) [below of=B] {$\mathbf{GSet}_{\bs{n}} ( \bs{B_k}, UF_k(P_k) )$};

\node (P) [node distance=1.8cm, left of=A, above of=A] {$R_k$};
\node (C) [node distance=1.8cm, left of=B, above of=B] {$\mathbf{GSet}_{\bs{n}} ( \bs{B_k}, U(\bs{J_k}) )$};
\node (Z) [node distance=1.9cm, left of=X, above of=X] {};
\node (Z') [node distance=0.3cm, above of=Z] {};
\node (Z'') [node distance=0.2cm, below of=Z, right of=Z] {};
\node () [node distance=0.9cm, left of=Z] {$\mathbf{GSet}_{\bs{n}} ( \bs{B_k}, U(\bs{J_k}) )$};

\draw[->] (A) to node {$q_{(F_k(P_k),k)}$} (B);
\draw[->] (A) to node {$e_{(F_k(P_k),k)}$} (X);
\draw[->] (X) to node [swap] {$U(\bs{\epsilon_k}) \cdot -$} (Y);
\draw[->] (B) to node {$U(\bs{\epsilon_k}) \cdot -$} (Y);

\draw[->] (Z) to node [swap] {$U(\bs{\eta_k}) \cdot -$} (X);
\draw[->] (C) to node {} (B);
\draw[->, dashed] (P) to node {$\eta^3_k$} (A);
\draw[->] (P) to node [swap] {$e_k$} (Z');
\draw[->] (P) to node {$q_k$} (C);

\node () [node distance=1cm, above of=B]  {$U(\bs{\eta_k}) \cdot -$};

\end{tikzpicture}
}
\end{center}

\noindent It is now a straighforward diagram chase to check that these morphisms are the components of a natural transformation $\eta_k:1 \longrightarrow V_kF_k$. The triangle identities can be verified by observing that $\epsilon_kF_k$, $V_k\epsilon_k$, $F_k\eta_k$ and $\eta_kV_k$ are all identity natural transformations.
\end{proof}

\begin{definition} 
A \textit{presentation} for an $n$-globular operad $\bs{G_n}$ is an $n$-presentation $P_n$ together with an isomorphism $F_n(P_n) \longrightarrow \bs{G_n}$.
\end{definition}

\begin{lemma}\label{map is determined at k-cell generators} Let $P_n$ be a presentation for an $n$-globular operad  $\bs{G_n}$. By the universal properties of coequalisers and pushouts, a morphism $\bs{G_n} \longrightarrow \bs{H_n}$ of $n$-globular operads is completely determined by its value on the $k$-cell generators for all $0 \leqslant k \leqslant n$.% i.e., by the induced morphisms $J_k \cdot \bs{B_k} \longrightarrow U(\bs{H_n})$ of $n$-globular collections. 
\end{lemma}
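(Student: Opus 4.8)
The plan is to prove, by induction on $k$, the slightly stronger statement that for every integer $k$ with $-1 \leqslant k \leqslant n$, any morphism $F_k(P_k) \longrightarrow \bs{H_n}$ of $n$-globular operads is completely determined by its values on the $j$-cell generators for all $0 \leqslant j \leqslant k$. The lemma is then the case $k = n$, transported along the isomorphism $F_n(P_n) \cong \bs{G_n}$. For the base case $k = -1$, we have $F_{-1}(P_{-1}) = \bs{1}$, the initial $n$-globular operad, so there is a unique morphism $\bs{1} \longrightarrow \bs{H_n}$ and the statement holds vacuously (there are no $j$-cell generators with $0 \leqslant j \leqslant -1$).

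For the inductive step, suppose the claim holds at level $k-1$. Recall from Definition \ref{The free pres functor.} that $F_k(P_k)$ is the coequaliser of $\bs{e_k}, \bs{q_k} \colon \bs{R_k} \rightrightarrows \bs{J_k}$; write $c_k \colon \bs{J_k} \longrightarrow F_k(P_k)$ for the canonical arrow. By the universal property of the coequaliser, a morphism $\phi \colon F_k(P_k) \longrightarrow \bs{H_n}$ is uniquely determined by the composite $\phi c_k \colon \bs{J_k} \longrightarrow \bs{H_n}$. Now $\bs{J_k}$ is by definition the pushout in $\mathbf{GOp}_{\bs n}$ of $F_{k-1}(P_{k-1}) \xleftarrow{\,j_k\,} J_k \cdot F(\bs{S_{k-1}}) \xrightarrow{\,J_k \cdot F(i_k)\,} J_k \cdot F(\bs{B_k})$, with coprojections $w_k$ and $w_k'$, so by the universal property of the pushout $\phi c_k$ is uniquely determined by the pair of composites $\phi c_k w_k \colon F_{k-1}(P_{k-1}) \longrightarrow \bs{H_n}$ and $\phi c_k w_k' \colon J_k \cdot F(\bs{B_k}) \longrightarrow \bs{H_n}$. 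The first is determined, by the inductive hypothesis, by the values of $\phi$ on the $j$-cell generators for $0 \leqslant j \leqslant k-1$. For the second, the monadic adjunction $F \dashv U \colon \mathbf{GOp}_{\bs n} \longrightarrow \mathbf{GColl}_{\bs n}$ together with the fact that $F$ preserves coproducts, so that $J_k \cdot F(\bs{B_k}) \cong F\big(J_k \cdot \bs{B_k}\big)$, identifies such a morphism with a morphism $J_k \cdot \bs{B_k} \longrightarrow U(\bs{H_n})$ of $n$-globular collections; since $J_k \cdot \bs{B_k}$ is a coproduct of one copy of $\bs{B_k}$ per element of $J_k$, this is precisely a choice, for each $k$-cell generator, of a $k$-cell of $\bs{H_n}$ over the prescribed shape --- that is, exactly the value of $\phi$ on that generator. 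Combining the two pieces shows $\phi$ is completely determined by its values on the $j$-cell generators for $0 \leqslant j \leqslant k$, completing the induction.

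The argument is essentially a bookkeeping exercise chaining the universal properties of the coequaliser in Definition \ref{The free pres functor.}, the defining pushout of $\bs{J_k}$, and the free-forgetful adjunction, so I do not anticipate a serious obstacle. The one point that requires care is that the lemma asserts only uniqueness (``completely determined''), not existence: one must resist also trying to show that an arbitrary assignment of values to the generators extends to a morphism, which would additionally require the assignment to be compatible with the sphere-boundary data $j_k$ and to respect the relations encoded by $e_k$ and $q_k$, and is not claimed here. A secondary point to keep straight is that the generators are collections \emph{over} $\bs{1}^*$, so a ``value on a $k$-cell generator'' means a $k$-cell of $\bs{H_n}$ whose image under the structure map $h \colon \bs{H_n} \longrightarrow \bs{1}^*$ equals the prescribed pasting-diagram shape; this compatibility is automatic from working inside the slice category $\mathbf{GColl}_{\bs n}$ throughout.
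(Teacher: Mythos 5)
Your proof is correct and is exactly the argument the paper intends: the lemma is stated in the text with only the phrase ``by the universal properties of coequalisers and pushouts,'' and your induction on $k$ --- chaining the coequaliser of Definition \ref{The free pres functor.}, the defining pushout of $\bs{J_k}$, and the adjunction $F \dashv U$ with preservation of coproducts --- fills in precisely those details. Your closing remarks about uniqueness versus existence and about values living over $\bs{1}^*$ are also the right points of care.
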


\begin{definitions}
We can also define a category $\mathbf{Pres}$ of presentations for globular operads together with a free-forgetful adjunction

\begin{center}
\begin{tikzpicture}[node distance=3cm, auto]

\node (A) {$\mathbf{Pres}$};
\node (B) [right of=A] {$\mathbf{GOp}$};

\draw[->, bend left=40] (A) to node {$F$} (B);
\draw[->, bend left=40] (B) to node {$V$} (A);

\node (W) [node distance=1.5cm, right of=A] {$\perp$};

\end{tikzpicture}
\end{center}

\noindent satisfying $FV=1$ and whose counit is the identity natural transformation.
The \textit{category $\mathbf{Pres}$ of presentations for globular operads} is the limit of the diagram 

\begin{center}
\begin{tikzpicture}[node distance=3cm, auto]

\node (A) {$...$};
\node (B) [node distance=2.5cm, right of=A] {$\bs{1} \mhyphen \mathbf{Pres}$};
\node (C) [right of=B] {$\bs{0} \mhyphen \mathbf{Pres}$};
\node (D) [node distance=3.2cm, right of=C] {$(-\bs{1})\mhyphen \mathbf{Pres}$};

\draw[->] (A) to node {} (B);
\draw[->] (B) to node {$U_1$} (C);
\draw[->] (C) to node {$U_0$} (D);

\end{tikzpicture}
\end{center}

\noindent where the $U_k$s are the canonical forgetful functors.
For each $k \in \mathbb{N}$ we have $U_{k-1}V_k = V_{k-1}:\mathbf{GOp} \longrightarrow \bs{k} \mhyphen \mathbf{Pres}$, so the $V_k$s form a cone over the diagram, defining a \textit{forgetful functor} $V:\mathbf{GOp} \longrightarrow \mathbf{Pres}$. 
The value of the \textit{free functor} $F:\mathbf{Pres} \longrightarrow \mathbf{GOp}$ at a presentation $P$ is the colimit of the diagram 

\vspace{1mm}

\begin{center}
\begin{tikzpicture}[node distance=2.5cm, auto]

\node (A) {$F_{-1}(P_{-1})$};
\node (B) [node distance=2.7cm, right of=A] {$F_0(P_0)$};
\node (C) [right of=B] {$F_1(P_1)$};
\node (D) [node distance=2cm, right of=C] {$...$};

\draw[->] (A) to node {} (B);
\draw[->] (B) to node {} (C);
\draw[->] (C) to node {} (D);

\end{tikzpicture}
\end{center}

\noindent where each $F_{k-1}(P_{k-1}) \longrightarrow F_k(P_k)$ is given by the composite below. 

\begin{center}
\begin{tikzpicture}[node distance=2.5cm, auto]

\node (A) {$F_{k-1}(P_{k-1})$};
\node (B) [node distance=2.7cm, right of=A] {$\bs{J_k}$};
\node (C) [right of=B] {$F_k(P_k)$};

\draw[->] (A) to node {$w_k$} (B);
\draw[->] (B) to node {} (C);

\end{tikzpicture}
\end{center}
It is now a straightforward exercise using the universal properties of sequential limits and colimits to show that $FV = 1$ and that $F$ is left adjoint to $V$.
\end{definitions}

\begin{definition}
A \textit{presentation} for a globular operad $\bs{G}$ is a presentation $P$ together with an isomorphism $F(P) \longrightarrow \bs{G}$.
\end{definition}

Every $n$-globular operad $\bs{G_n}$ has at least one presentation, namely $V_n(\bs{G_n})$. However, it is often possible to find a simpler presentation. Below is a detailed example of a presentation for the $2$-globular operad for strict $2$-categories; see Example \ref{GlobOp T for strict n cats}. 

\begin{example}\label{biased presentation for T2} To construct a presentation for the $2$-globular operad $\bs{T_2}$ for strict 2-categories, we first define $P_0 \coloneq (\star, \emptyset, !, \emptyset, !)$, where $\star$ is the unique object of the category $(-1) \mhyphen \mathbf{Pres}$ of (-1)-presentations. Then $F_0(P_0) \cong \bs{1}$ is the initial $2$-globular operad consisting only of identity cells.

Next, we construct a 1-presentation $P_1 = (P_0, J_1, \partial_1, R_1, r_1)$. Define $J_1 \coloneq \{ i_1, h_1 \}$ and let the function $J_1 \longrightarrow \mathbf{GSet}_{\bs{2}}(\bs{B_1}, \bs{1}^*)$ be the one illustrated below.
\vspace{1mm}
\begin{center}
\begin{tikzpicture}[node distance=2cm, auto]

\node (B) {$i_1$};
\node (P) [node distance=3cm, right of =B] {$\cdot$};

\node (b) [node distance=0.5cm, right of=B] {};
\node (p) [node distance=0.5cm, left of=P] {};
\draw[|->, dashed] (b) to node {} (p);

\node (B') [node distance=8mm, below of=B] {$h_1$};
\node (P') [node distance=3cm, right of =B'] {$\cdot$};
\node (Q') [right of =P'] {$\cdot$};
\node (R') [right of =Q'] {$\cdot$};

\draw[->] (P') to node {} (Q');
\draw[->] (Q') to node {} (R');

\node (b') [node distance=0.5cm, right of=B'] {};
\node (p') [node distance=0.5cm, left of=P'] {};
\draw[|->, dashed] (b') to node {} (p');

\end{tikzpicture}
\end{center}
\vspace{1mm}
Since $\bs{S_0}$ is empty in dimensions $\geqslant 0$ and $F_0(P_0)$ has exactly one 0-cell, there exists a unique function $\partial_1:J_1 \longrightarrow \mathbf{GSet}_{\bs{2}} (\bs{S_0}, UF_0(P_0))$. It follows that $\bs{J_1}$ is the 2-globular operad whose only 0 and 2-cells are the identity cells $\text{id}_0$ and $\text{id}_2$, and whose 1-cells are the free operadic composites of $i_1$ and $h_1$: the 1-cells of $\bs{J_1}$ are $\text{id}_1, \ i_1, \ h_1, \ h_1 \circ (\text{id}_1,i_1), \ h_1 \circ (h_1,\text{id}_1), \ h_1 \circ (h_1,h_1), \, ...$ and so on. 
Define $R_1 = \{ u,v,a \}$ and let $r_1 : R_1 \longrightarrow \mathbf{GSet}_{\bs{2}}(\bs{S_2}, U(\bs{J_1}))$ be the function induced by the pair of functions
\vspace{1mm}
\begin{center}
\begin{tikzpicture}[node distance=3.5cm, auto]

\node (B) {$\mathbf{GSet}_{\bs{2}}(\bs{B_2}, U(\bs{J_1}))$};
\node (C) [left of=B] {$R_1$};

\draw[transform canvas={yshift=0.75ex},->] (C) to node {$e_1$} (B);
\draw[transform canvas={yshift=-0.25ex},->] (C) to node [swap] {$q_1$} (B);

\end{tikzpicture}
\end{center}
\vspace{-1mm}
defined below.
\begin{enumerate}[i)]

\item 
$e_1(u) = h_1 \circ (i_1,\text{id}_1)$\\
$q_1(u) = \text{id}_1$

\vspace{2mm}

\item
$e_1(v)=h_1 \circ (\text{id}_1,i_1)$\\
$q_1(v)=\text{id}_1$

\vspace{2mm}

\item
$e_1(a) = h_1 \circ (h_1, \text{id}_1)$\\ 
$q_1(a) = h_1 \circ (\text{id}_1,h_1)$

\end{enumerate}

\vspace{1mm}

$F_1(P_1)$ is then the 2-globular operad whose only 0 and 2-cells are the identity cells $\text{id}_0$ and $\text{id}_2$, and whose 1-cells are the operadic composites of 1-cells $i_1$ and $h_1$ with the following images under the underlying collection map,
\vspace{-1mm}
\begin{center}
\begin{tikzpicture}[node distance=2cm, auto]

\node (A) {$\text{id}_0$};
\node (B) [node distance=2.4cm, right of =A] {$\text{id}_0$};

\draw[->] (A) to node {$i_1$} (B);

\node (P) [node distance=3cm, right of =B] {$\cdot$};

\node (b) [node distance=0.5cm, right of=B] {};
\node (p) [node distance=0.5cm, left of=P] {};
\draw[|->, dashed] (b) to node {} (p);

\node (A') [node distance=9mm, below of=A] {$\text{id}_0$};
\node (B') [node distance=2.4cm, right of =A'] {$\text{id}_0$};

\draw[->] (A') to node {$h_1$} (B');

\node (P') [node distance=3cm, right of =B'] {$\cdot$};
\node (Q') [right of =P'] {$\cdot$};
\node (R') [right of =Q'] {$\cdot$};

\draw[->] (P') to node {} (Q');
\draw[->] (Q') to node {} (R');

\node (b') [node distance=0.5cm, right of=B'] {};
\node (p') [node distance=0.5cm, left of=P'] {};
\draw[|->, dashed] (b') to node {} (p');

\end{tikzpicture}
\end{center}
subject to the equalities below.
\begin{enumerate}[i)]
\item $h_1 \circ (i_1,\text{id}_1) = \text{id}_1$
\item $h_1 \circ (\text{id}_1,i_1) = \text{id}_1$
\item $h_1 \circ (h_1, \text{id}_1) = h_1 \circ (\text{id}_1,h_1)$
\end{enumerate} 
Observe that $F_1(P_1) \cong Tr_1(\bs{T_2})$, so $F_1(P_1)$ is isomorphic to $\bs{T_2}$ in dimensions $\leqslant 1$.

Finally, we construct a presentation $P_2 = (P_1, J_2, \partial_2, R_2, r_2)$ for $\bs{T_2}$. Define $J_2 = \{ i_2, h_2, v_2 \}$, let $J_2 \longrightarrow \mathbf{GSet}_{\bs{2}}(\bs{B_2}, \bs{1}^*)$ be the function
\begin{center}
\begin{tikzpicture}[node distance=2cm, auto]

\node (B) {$i_2$};

\node (P) [node distance=3cm, right of =B] {$\cdot$};
\node (Q) [right of =P] {$\cdot$};

\draw[->] (P) to node {} (Q);

\node (b) [node distance=0.5cm, right of=B] {};
\node (p) [node distance=0.5cm, left of=P] {};
\draw[|->, dashed] (b) to node {} (p);

\node (B') [node distance=1.2cm, below of=B] {$h_2$};

\node (P') [node distance=3cm, right of =B'] {$\cdot$};
\node (Q') [right of =P'] {$\cdot$};
\node (R') [right of =Q'] {$\cdot$};

\draw[->, bend left=40] (P') to node {} (Q');
\draw[->, bend right=40] (P') to node {} (Q');
\draw[->, bend left=40] (Q') to node {} (R');
\draw[->, bend right=40] (Q') to node {} (R');

\node () [node distance=1cm, right of =P'] {$\Downarrow$};
\node () [node distance=1cm, right of =Q'] {$\Downarrow$};

\node (b') [node distance=0.5cm, right of=B'] {};
\node (p') [node distance=0.5cm, left of=P'] {};
\draw[|->, dashed] (b') to node {} (p');

\node (B'') [node distance=1.5cm, below of=B'] {$v_2$};

\node (P'') [node distance=3cm, right of =B''] {$\cdot$};
\node (Q'') [right of =P''] {$\cdot$};

\draw[->, bend left=60] (P'') to node {} (Q'');
\draw[->] (P'') to node {} (Q'');
\draw[->, bend right=60] (P'') to node {} (Q'');

\node (x) [node distance=1cm, right of =P''] {};
\node () [node distance=0.3cm, above of =x] {$\Downarrow$};
\node () [node distance=0.35cm, below of =x] {$\Downarrow$};

\node (b'') [node distance=0.5cm, right of=B''] {};
\node (p'') [node distance=0.5cm, left of=P''] {};
\draw[|->, dashed] (b'') to node {} (p'');

\end{tikzpicture}
\end{center}
and define $\partial_2:J_2 \longrightarrow \mathbf{GSet}_{\bs{2}}(\bs{S_1}, UF_1(P_1))$ to be function given by,
\begin{center}
\begin{tikzpicture}[node distance=2cm, auto]

\node (B) {$i_2, v_2$};

\node (P) [node distance=3cm, right of =B] {$\text{id}_0$};
\node (Q) [node distance=2.2cm, right of =P] {$\text{id}_0$};

\draw[->, bend left=35] (P) to node {$\text{id}_1$} (Q);
\draw[->, bend right=35] (P) to node [swap] {$\text{id}_1$} (Q);

\node (b) [node distance=0.5cm, right of=B] {};
\node (p) [node distance=0.5cm, left of=P] {};
\draw[|->, dashed] (b) to node {$\partial_2$} (p);

\node (B') [node distance=2.2cm, below of=B] {$h_2$};

\node (P') [node distance=3cm, right of =B'] {$\text{id}_0$};
\node (Q') [node distance=2.2cm, right of =P'] {$\text{id}_0$};
\node (x) [node distance=3mm, right of =Q'] {};
\node () [node distance=1mm, below of=x] {$.$};

\draw[->, bend left=35] (P') to node {$h_1$} (Q');
\draw[->, bend right=35] (P') to node [swap] {$h_1$} (Q');

\node (b') [node distance=0.5cm, right of=B'] {};
\node (p') [node distance=0.5cm, left of=P'] {};
\draw[|->, dashed] (b') to node {$\partial_2$} (p');

\end{tikzpicture}
\end{center}
Then $\bs{J_2}$ is the 2-globular operad isomorphic to $\bs{T_2}$ in dimensions $\leqslant 1$, and whose 2-cells are the operadic composites of $i_2$, $h_2$ and $v_2$: the 2-cells of $\bs{J_2}$ are $\text{id}_2, \ i_2, \ h_2, \ v_2, \ v_2 \cdot (h_2, h_2 \circ (\text{id}_2, i_2)), \ h_2 \circ (i_2 \circ (i_1), h_2 ), \, ...$ and so on.
Define $R_2 = \{ u,v,p,q,a,b,n,m\}$ and let $r_2 : R_2 \longrightarrow \mathbf{GSet}_{\bs{2}}(\bs{S_2}, U(\bs{J_2}))$ be the function induced by the pair of functions
%\vspace{1mm}
\begin{center}
\begin{tikzpicture}[node distance=3.5cm, auto]

\node (B) {$\mathbf{GSet}_{\bs{2}}(\bs{B_2}, U(\bs{J_2}))$};
\node (C) [left of=B] {$R_2$};

\draw[transform canvas={yshift=0.75ex},->] (C) to node {$e_2$} (B);
\draw[transform canvas={yshift=-0.25ex},->] (C) to node [swap] {$q_2$} (B);

\end{tikzpicture}
\end{center}
defined below.
\begin{enumerate}[i)]

\item
$e_2(u) = h_2 \circ \big(i_2 \circ (i_1), \text{id}_2\big)$\\
$q_2(u) = \text{id}_2$ 
\vspace{2mm}

\item
$e_2(v) = h_2 \circ \big(\text{id}_2, i_2 \circ (i_1)\big)$\\
$q_2(v) = \text{id}_2$ 
\vspace{2mm}

\item
$e_2(p) = v_2 \circ (i_2, \text{id}_2)$\\
$q_2(p) = \text{id}_2$
\vspace{2mm}

\item
$e_2(q) =v_2 \circ (\text{id}_2, i_2)$\\
$q_2(q) = \text{id}_2$
\vspace{2mm}

\item
$e_2(a) = h_2 \circ (\text{id}_2, h_2)$ \\
$q_2(a) = h_2 \circ (h_2, \text{id}_2)$
\vspace{2mm}

\item
$e_2(b) = v_2 \circ (\text{id}_2, v_2)$\\
$q_2(b) = v_2 \circ (v_2, \text{id}_2)$
\vspace{2mm}

\item
$e_2(n) = h_2 \circ (v_2, v_2)$\\
$q_2(n) = v_2 \circ (h_2, h_2)$
\vspace{2mm}

\item
$e_2(m) = h_2 \circ (i_2, i_2)$\\
$q_2(m) = i_2 \circ (h_1)$
\vspace{2mm}

\end{enumerate}

The 2-globular operad $F_2(P_2)$ is isomorphic to $F_1(P_1)$ in dimensions $<2$ and has 2-cells consisting of the operadic composites of 2-cells $i_2$, $h_2$ and $v_2$ whose images under the underlying collection map are as follows,

\begin{center}
\resizebox{3.9in}{!}{
\begin{tikzpicture}[node distance=2cm, auto]

\node (A) {$\text{id}_0$};
\node (B) [node distance=2.5cm, right of =A] {$\text{id}_0$};

\draw[->, bend left=40] (A) to node {$\text{id}_1$} (B);
\draw[->, bend right=40] (A) to node [swap] {$\text{id}_1$} (B);

\node () [node distance=1.25cm, right of =A] {$\Downarrow i_2$};

\node (P) [node distance=3cm, right of =B] {$\cdot$};
\node (Q) [right of =P] {$\cdot$};

\draw[->] (P) to node {} (Q);

\node (b) [node distance=0.5cm, right of=B] {};
\node (p) [node distance=0.5cm, left of=P] {};
\draw[|->, dashed] (b) to node {} (p);

\node (A') [node distance=2.5cm, below of=A] {$\text{id}_0$};
\node (B') [node distance=2.5cm, right of =A'] {$\text{id}_0$};

\draw[->, bend left=40] (A') to node {$h_1$} (B');
\draw[->, bend right=40] (A') to node [swap] {$h_1$} (B');

\node () [node distance=1.25cm, right of =A'] {$\Downarrow h_2$};

\node (P') [node distance=3cm, right of =B'] {$\cdot$};
\node (Q') [right of =P'] {$\cdot$};
\node (R') [right of =Q'] {$\cdot$};

\draw[->, bend left=40] (P') to node {} (Q');
\draw[->, bend right=40] (P') to node {} (Q');
\draw[->, bend left=40] (Q') to node {} (R');
\draw[->, bend right=40] (Q') to node {} (R');

\node () [node distance=1cm, right of =P'] {$\Downarrow$};
\node () [node distance=1cm, right of =Q'] {$\Downarrow$};

\node (b') [node distance=0.5cm, right of=B'] {};
\node (p') [node distance=0.5cm, left of=P'] {};
\draw[|->, dashed] (b') to node {} (p');

\node (A'') [node distance=2.5cm, below of=A'] {$\text{id}_0$};
\node (B'') [node distance=2.5cm, right of =A''] {$\text{id}_0$};

\draw[->, bend left=40] (A'') to node {$\text{id}_1$} (B'');
\draw[->, bend right=40] (A'') to node [swap] {$\text{id}_1$} (B'');

\node () [node distance=1.25cm, right of =A''] {$\Downarrow v_2$};

\node (P'') [node distance=3cm, right of =B''] {$\cdot$};
\node (Q'') [right of =P''] {$\cdot$};

\draw[->, bend left=60] (P'') to node {} (Q'');
\draw[->] (P'') to node {} (Q'');
\draw[->, bend right=60] (P'') to node {} (Q'');

\node (x) [node distance=1cm, right of =P''] {};
\node () [node distance=0.3cm, above of =x] {$\Downarrow$};
\node () [node distance=0.35cm, below of =x] {$\Downarrow$};

\node (b'') [node distance=0.5cm, right of=B''] {};
\node (p'') [node distance=0.5cm, left of=P''] {};
\draw[|->, dashed] (b'') to node {} (p'');

\end{tikzpicture}
}
\end{center}

\noindent subject to the equalities listed below.
\begin{enumerate}[i)]
\item $h_2 \circ \big(i_2 \circ (i_1), \text{id}_2\big) = \text{id}_2$
\item $h_2 \circ \big(\text{id}_2, i_2 \circ (i_1)\big) = \text{id}_2$
\item $v_2 \circ (i_2, \text{id}_2) = \text{id}_2$
\item $v_2 \circ (\text{id}_2, i_2) = \text{id}_2$
\item $h_2 \circ (\text{id}_2, h_2) = h_2 \circ (h_2, \text{id}_2)$
\item $v_2 \circ (\text{id}_2, v_2) = v_2 \circ (v_2, \text{id}_2)$
\item $h_2 \circ (v_2,v_2) = v_2 \circ (h_2,h_2)$
\item $h_2 \circ (i_2,i_2) = i_2 \circ (h_1)$
\end{enumerate}

\noindent These equalities mean that $F_2(P_2)$ contains exactly one 2-cell for each 2-cell in $\bs{1}^*$, so $F_2(P_2) \cong \bs{T_2}$.

\end{example} 

An algebra for $F_2(P_2)$ on a 2-globular set $\bs{A}$ is precisely a strict 2-category with underlying 2-globular set $\bs{A}$. The (1-)category structure imposed on the 0 and 1-cells of $\bs{A}$ is the same as in Example \ref{pexample-presentations}.
The 2-cells $i_2$, $h_2$ and $v_2$ of $F_2(P_2)$ provide 2-cell identites, horizontal composition of 2-cells and binary composition of 2-cells in $\bs{A}$, respectively. Equalities i) - iv) above yield the four unit axioms, equalities v) and vi) yield the two associativity axioms, equality vii) yields the interchange law, and equality viii) yields the axiom stating that the horizontal composite of two identity cells is another identity cell. 

From this point onwards, we will define $n$-globular operads by describing the generators and relations of a presentation, as in Example \ref{pexample-presentations}. Additionally, since we are only interested in those $n$-globular operads which are equivalent to some theory of $n$-category, and there are no non-trivial operations on 0-cells in higher categories, our globular operads will always have a single 0-cell - the identity $\text{id}_0$. This means that the presentations will contain no 0-cell generators, and therefore no 0-cell relations. Since there is no ambiguity, we will leave the single identity 0-cell of these operads unlabelled. For example, we would represent a 1-cell $x$ of an $n$-globular operad for some theory of $n$-category by 
\vspace{1mm}
\begin{center}
\begin{tikzpicture}[node distance=2.4cm, auto]

\node (A) {$\cdot$};
\node (B) [right of =A] {$\cdot$};

\draw[->] (A) to node {$x$} (B);

\node (x) [right of=B] {rather than};

\node (A') [right of=x] {$\text{id}_0$};
\node (B') [right of =A'] {$\text{id}_0$.};

\draw[->] (A') to node {$x$} (B');

\end{tikzpicture}
\end{center}
The next example defines the 2-globular operad for strict 2-categories using the presentation for $\bs{T_2}$ given in Example \ref{biased presentation for T2} above.

\begin{example}\label{pres for T_2}
The 2-globular operad $\bs{T_2}$ for strict 2-categories is the 2-globular operad with

\begin{list}{$\bullet$}{}

\item a single 0-cell, the identity $\text{id}_0$; 

\item 1-cells consisting of the operadic composites of 1-cells $i_1$ and $h_1$ whose images under the underlying collection map are as follows

\begin{center}
\begin{tikzpicture}[node distance=2cm, auto]

\node (A) {$\cdot$};
\node (B) [right of =A] {$\cdot$};

\draw[->] (A) to node {$i_1$} (B);

\node (P) [node distance=3cm, right of =B] {$\cdot$};

\node (b) [node distance=0.5cm, right of=B] {};
\node (p) [node distance=0.5cm, left of=P] {};
\draw[|->, dashed] (b) to node {} (p);

\node (A') [node distance=9mm, below of=A] {$\cdot$};
\node (B') [right of =A'] {$\cdot$};

\draw[->] (A') to node {$h_1$} (B');

\node (P') [node distance=3cm, right of =B'] {$\cdot$};
\node (Q') [right of =P'] {$\cdot$};
\node (R') [right of =Q'] {$\cdot$};

\draw[->] (P') to node {} (Q');
\draw[->] (Q') to node {} (R');

\node (b') [node distance=0.5cm, right of=B'] {};
\node (p') [node distance=0.5cm, left of=P'] {};
\draw[|->, dashed] (b') to node {} (p');

\end{tikzpicture}
\end{center}
\vspace{1mm}
subject to the following equalities
\begin{enumerate}[i)]
\item $h_1 \circ (i_1, \text{id}_1) = \text{id}_1$
\item $h_1 \circ (\text{id}_1, i_1) = \text{id}_1$ 
\item $h_1 \circ (\text{id}_1, h_1) = h_1 \circ (h_1, \text{id}_1)$;
\end{enumerate}
and

\item 2-cells consisting of the operadic composites of 2-cells $i_2$, $h_2$ and $v_2$ whose images under the underlying collection map are as follows

\begin{center}
\begin{tikzpicture}[node distance=2cm, auto]

\node (A) {$\cdot$};
\node (B) [node distance=2.5cm, right of =A] {$\cdot$};

\draw[->, bend left=40] (A) to node {$\text{id}_1$} (B);
\draw[->, bend right=40] (A) to node [swap] {$\text{id}_1$} (B);

\node () [node distance=1.25cm, right of =A] {$\Downarrow i_2$};

\node (P) [node distance=3cm, right of =B] {$\cdot$};
\node (Q) [right of =P] {$\cdot$};

\draw[->] (P) to node {} (Q);

\node (b) [node distance=0.5cm, right of=B] {};
\node (p) [node distance=0.5cm, left of=P] {};
\draw[|->, dashed] (b) to node {} (p);

\node (A') [node distance=2.4cm, below of=A] {$\cdot$};
\node (B') [node distance=2.5cm, right of =A'] {$\cdot$};

\draw[->, bend left=40] (A') to node {$h_1$} (B');
\draw[->, bend right=40] (A') to node [swap] {$h_1$} (B');

\node () [node distance=1.25cm, right of =A'] {$\Downarrow h_2$};

\node (P') [node distance=3cm, right of =B'] {$\cdot$};
\node (Q') [right of =P'] {$\cdot$};
\node (R') [right of =Q'] {$\cdot$};

\draw[->, bend left=40] (P') to node {} (Q');
\draw[->, bend right=40] (P') to node {} (Q');
\draw[->, bend left=40] (Q') to node {} (R');
\draw[->, bend right=40] (Q') to node {} (R');

\node () [node distance=1cm, right of =P'] {$\Downarrow$};
\node () [node distance=1cm, right of =Q'] {$\Downarrow$};

\node (b') [node distance=0.5cm, right of=B'] {};
\node (p') [node distance=0.5cm, left of=P'] {};
\draw[|->, dashed] (b') to node {} (p');

\node (A'') [node distance=2.4cm, below of=A'] {$\cdot$};
\node (B'') [node distance=2.5cm, right of =A''] {$\cdot$};

\draw[->, bend left=40] (A'') to node {$\text{id}_1$} (B'');
\draw[->, bend right=40] (A'') to node [swap] {$\text{id}_1$} (B'');

\node () [node distance=1.25cm, right of =A''] {$\Downarrow v_2$};

\node (P'') [node distance=3cm, right of =B''] {$\cdot$};
\node (Q'') [right of =P''] {$\cdot$};

\draw[->, bend left=60] (P'') to node {} (Q'');
\draw[->] (P'') to node {} (Q'');
\draw[->, bend right=60] (P'') to node {} (Q'');

\node (x) [node distance=1cm, right of =P''] {};
\node () [node distance=0.3cm, above of =x] {$\Downarrow$};
\node () [node distance=0.35cm, below of =x] {$\Downarrow$};

\node (b'') [node distance=0.5cm, right of=B''] {};
\node (p'') [node distance=0.5cm, left of=P''] {};
\draw[|->, dashed] (b'') to node {} (p'');

\end{tikzpicture}
\end{center}

\noindent subject to the equations below.
\begin{enumerate}[i)]
\item $h_2 \circ \big(i_2 \circ (i_1), \text{id}_2\big) = \text{id}_2$
\item $h_2 \circ \big(\text{id}_2, i_2 \circ (i_1)\big) = \text{id}_2$
\item $v_2 \circ (i_2, \text{id}_2) = \text{id}_2$
\item $v_2 \circ (\text{id}_2, i_2) = \text{id}_2$
\item $h_2 \circ (\text{id}_2, h_2) = h_2 \circ (h_2, \text{id}_2)$
\item $v_2 \circ (\text{id}_2, v_2) = v_2 \circ (v_2, \text{id}_2)$
\item $h_2 \circ (v_2,v_2) = v_2 \circ (h_2,h_2)$
\item $h_2 \circ (i_2,i_2) = i_2 \circ (h_1)$
\end{enumerate}
\end{list}

In this example $i_1$ and $h_1$ are the 1-cell generators and the equations they satisfy correspond to the 1-cell relations. Similarly, $i_2$, $h_2$ and $v_2$ are the 2-cell generators and the equations they satisfy correspond to the 2-cell relations.
\end{example}

If an $n$-globular operad $\bs{G_n}$ is contractible then there is no need to specify any $n$-cell generators or relations in a presentation for $\bs{G_n}$; by Lemma \ref{contractible n-cells} a contractible $n$-globular operad is completely determined by the $k$-cells for all $k<n$.
In light of this, we have the following equivalent definition of $\bs{T_2}$.

\begin{example}\label{example of a contractible presentation}
The 2-globular operad $\bs{T_2}$ is the \textit{contractible} 2-globular operad whose 0 and 1-cells are as in Example \ref{pres for T_2}.
\end{example}

\section{The globular operads for weak unbiased higher categories}\label{weak unbiased section}

Many classical definitions of higher category have two types of composition - binary and nullary. Binary composition takes two composable cells and produces a single composite cell, while nullary composition picks out identities. All other compositions are derived from these two generating types. For instance, given three composable 1-cells

\begin{center}
\begin{tikzpicture}[node distance=2cm, auto]

\node (A) {$A$};
\node (B) [right of=A] {$B$};
\node (C) [right of=B] {$C$};
\node (D) [right of=C] {$D$};

\draw[->] (A) to node {$a$} (B);
\draw[->] (B) to node {$b$} (C);
\draw[->] (C) to node {$c$} (D);

\end{tikzpicture}
\end{center}

\noindent in a higher category we have composites like $a(bc)$, $(ab)c$, $((a1_B)b)c$, and so on, but there is no specified ternary composite $abc$. Likewise, for a 2-pasting diagram 

\begin{center}
\begin{tikzpicture}[node distance=2.5cm, auto]

\node (A) {$A$};
\node (B) [right of=A] {$B$};
\node (C) [right of=B] {$C$};

\draw[->, bend left=60] (A) to node {$a$} (B);
\draw[->, bend left=60] (B) to node {$b$} (C);

\draw[->] (A) to node {} (B);
\draw[->] (B) to node {} (C);

\draw[->, bend right=60] (A) to node [swap] {$a''$} (B);
\draw[->, bend right=60] (B) to node [swap] {$b''$} (C);

\node (W) [node distance=1.25cm, right of=A] {};
\node (U) [node distance=1.25cm, right of=B] {};

\node (W') [node distance=0.35cm, above of=W] {$\Downarrow  \alpha$};
\node (U') [node distance=0.35cm, above of=U] {$\Downarrow  \beta$};

\node (W'') [node distance=0.4cm, below of=W] {$\Downarrow  \alpha'$};
\node (U'') [node distance=0.4cm, below of=U] {$\Downarrow  \beta'$};

\end{tikzpicture}
\end{center} 

\noindent we have composites like $(\alpha \cdot \alpha') \ast (\beta \cdot \beta')$, $(\alpha \ast \beta) \cdot (\alpha' \ast \beta')$ and $(\alpha \cdot \alpha') \ast (\beta \cdot (1_b' \cdot \beta') )$, but no specified operation directly composing a pasting diagram like this into a 2-cell $\alpha \alpha' \beta \beta'$.
We say that these definitions of higher category are \textit{biased} towards binary and nullary composition. An \textit{unbiased} higher category is one which takes into account all types of composition, rather than just the binary and nullary composition operations present in traditional definitions of higher category. 

\begin{definition}
An \textit{unbiased higher category} is a higher category for which, given any $k$-pasting diagram together with a way of composing its $(k-1)$-dimensional boundary, there is a specified operation composing it directly into a single $k$-cell. 
\end{definition}

Leinster defines weak $\omega$-categories as algebras for the initial globular operad with contraction (Notation \ref{the category CG-Op}) and weak $n$-categories as algebras for the initial $n$-globular operad with contraction (Notation \ref{the category CG-Opn}) \cite[Chapter 9]{TL}. The algebras for these operads are weak unbiased higher categories. In this section we provide explicit definitions of these operads by constructing presentations for them expressed in the style of Example \ref{example of a contractible presentation}; that is, omitting the unnecessary $n$-dimensional data. We show that for $n = 0, 1$ and $2$, the algebras for the initial $n$-globular operad with contraction are precisely sets, categories and unbiased bicategories, respectively. See \cite[Definition 1.2.1]{TL3} for a hands-on definition of an unbiased bicategory.

\begin{notation} We will denote by $i:\bs{I_n} \longrightarrow \bs{1}^*$ the underlying collection map of the $n$-globular operad $\bs{I_n}$ for weak unbiased $n$-categories
\end{notation}

\begin{definition}\label{presentation for I_n} The $n$-globular operad $\bs{I_n}$ for weak unbiased $n$-categories is the \textit{contractible} $n$-globular operad with

\begin{list}{•}{}

\item a single 0-cell, the identity $\text{id}_0$; and

\item for $0<k<n$, $k$-cells consisting of the operadic composites of a specified $k$-cell $\chi:x \longrightarrow x'$ satisfying $i(\chi)=\tau$ for each triple $(x,x',\tau)$ where $(x,x')$ is a parallel pair of $(k{-}1)$-cells of $\bs{I_n}$ satisfying $i(x)=i(x')={\partial}\tau$.

\end{list}
\end{definition}

\begin{center}
\begin{tikzpicture}[node distance=2.4cm, auto]

\node (B) {$\cdot$};
\node (C) [node distance=2.2cm, right of=B] {$\cdot$};
\node (D) [node distance=2cm, right of=C] {$\cdot$};
\node (D') [node distance=0.5cm, right of=D] {$= \tau$}; 

\draw[->] (C) to node {} (D);

\draw[->, bend left=60] (B) to node {} (C);
\draw[->] (B) to node {} (C);
\draw[->, bend right=60] (B) to node [swap] {} (C);

\node (W) [node distance=1.1cm, right of=B] {};
\node (YA) [node distance=0.37cm, above of=W] {$\Downarrow $};
\node (YC) [node distance=0.38cm, below of=W] {$\Downarrow $};

\node (X) [node distance=3cm, left of=B] {$\cdot$};
\node (Y) [left of=X] {$\cdot$};
\draw[->, bend left=40] (Y) to node {$x$} (X);
\draw[->, bend right=40] (Y) to node [swap] {$x'$} (X);
\node (Z) [node distance=1.2cm, right of=Y] {$\Downarrow \chi$};

\node (P) [node distance=0.5cm, right of=X] {};
\node (Q) [node distance=0.5cm, left of=B] {};
\draw[|->, dashed] (P) to node {$i$} (Q);

\end{tikzpicture}
\end{center}

Note that our presentation for $\bs{I_n}$ has no $k$-cell relations, only $k$-cell generators. This is what we would expect, as the $k$-cell relations in a presentation for the ($n$-)globular operad for some theory of higher category should correspond to the axioms on $k$-cell composition in those higher categories. In order to satisfy the Coherence Theorem \ref{the coherence theorem}, the axioms for a fully weak higher category should impose no restraints on $k$-cell composition for any $k<n$. We also observe that there is a canonical contraction on the underlying collection of $\bs{I_n}$, defined by taking the $k$-cell generators to be the contraction cells.

\begin{lemma} When equipped with its canonical contraction, the $n$-globular operad $\bs{I_n}$ for weak unbiased $n$-categories is initial in the category $\mathbf{C \mhyphen GOp}_{\bs{n}}$ of $n$-globular operads with contraction. 
\end{lemma}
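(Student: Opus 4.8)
The plan is to verify directly that $\bs{I_n}$, with its canonical contraction, admits a unique contraction-preserving morphism into every object of $\mathbf{C \mhyphen GOp}_{\bs{n}}$. Fix a contractible $n$-globular operad $\bs{H_n}$ with a chosen contraction $\kappa$, and write $\gamma$ for the canonical contraction on $\bs{I_n}$. I would build the comparison morphism $f \colon \bs{I_n} \longrightarrow \bs{H_n}$ by induction on dimension, using the presentation of $\bs{I_n}$ from Definition \ref{presentation for I_n}. In dimension $0$ there is nothing to choose: $\bs{I_n}$ has a single $0$-cell and every morphism of $n$-globular operads must send it to $\text{id}_0$. Suppose $f$ has been constructed on the $(k-1)$-truncation of $\bs{I_n}$, compatibly with sources, targets, identities, operadic composition and the collection maps, for some $k$ with $1 \leqslant k \leqslant n-1$. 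The $k$-cell generators of $\bs{I_n}$ are exactly the contraction cells $\gamma(x,x',\tau)\colon x \longrightarrow x'$, one for each triple with $i(x)=i(x')=\partial\tau$. For such a triple, $\big(f(x),f(x')\big)$ is a parallel pair of $(k-1)$-cells of $\bs{H_n}$ lying over $\partial\tau$, so I may set $f\big(\gamma(x,x',\tau)\big) := \kappa\big(f(x),f(x'),\tau\big)$; this $k$-cell of $\bs{H_n}$ has the correct source, target, and image under the collection map of $\bs{H_n}$. Since the presentation of $\bs{I_n}$ has no $k$-cell relations, the $k$-truncation of $\bs{I_n}$ is freely generated over its $(k-1)$-truncation by these cells, so by the universal properties underlying the adjunction $F_k \dashv V_k$ (see also Lemma \ref{map is determined at k-cell generators}) this assignment extends uniquely to an operad morphism on the $k$-truncation, still compatible with identities, composition, and the collection maps. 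Finally, in dimension $n$ there are no generators: by Lemma \ref{contractible n-cells} every $n$-cell of $\bs{I_n}$ is the unique $n$-cell over its triple of $(n-1)$-cells, and since $\bs{H_n}$ is contractible it satisfies condition (ii) of Definition \ref{contractible n-glob ops}, so there is a unique $n$-cell of $\bs{H_n}$ over the image triple; sending one to the other is forced, hence automatically compatible with composition, and yields $f \colon \bs{I_n} \longrightarrow \bs{H_n}$. (This last step is essentially Lemma \ref{contractible n-morphisms}.)

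Next I would check that $f$ is a morphism of $n$-globular operads that preserves the contraction. Commuting with the collection maps and preserving identities and operadic composites are each verified on generators and then propagate by multiplicativity; preservation of the contraction is immediate from the construction on generators. The one point worth isolating is that these two requirements do not conflict: in $\bs{I_n}$, for $k < n$, the identity $k$-cell $\text{id}_k$ is \emph{not} the contraction cell $\gamma(\text{id}_{k-1},\text{id}_{k-1},\iota)$ — the presentation has no $k$-cell relations, so although these two $k$-cells are parallel and lie over the same shape $\iota$ they remain distinct — so the constraints $f(\text{id}_k)=\text{id}_k$ and $f\big(\gamma(\text{id}_{k-1},\text{id}_{k-1},\iota)\big)=\kappa(\text{id}_{k-1},\text{id}_{k-1},\iota)$ are imposed on different cells.

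For uniqueness I would show, again by induction on $k$, that any contraction-preserving operad morphism $f' \colon \bs{I_n} \longrightarrow \bs{H_n}$ coincides with $f$ on every $k$-cell. The base case is forced. In the inductive step, on a $k$-cell generator preservation of the contraction gives $f'\big(\gamma(x,x',\tau)\big)=\kappa\big(f'(x),f'(x'),\tau\big)=\kappa\big(f(x),f(x'),\tau\big)=f\big(\gamma(x,x',\tau)\big)$, while on an arbitrary $k$-cell — an operadic composite of generators and identities — the two morphisms agree because each is determined by its common values on generators and identities. The top dimension is handled once more by Lemma \ref{contractible n-morphisms}, using that $\bs{H_n}$ satisfies condition (ii). Hence $f$ is the unique contraction-preserving morphism $\bs{I_n} \longrightarrow \bs{H_n}$, and $\bs{I_n}$ is initial in $\mathbf{C \mhyphen GOp}_{\bs{n}}$.

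The step I expect to be most delicate is the inductive extension across dimensions: making rigorous that "no $k$-cell relations" genuinely licenses freely assigning the $k$-cell generators to the chosen contraction cells of $\bs{H_n}$ and obtaining an honest operad morphism — this is where the pushout-and-coequaliser description of $F_k$ and the adjunction $F_k \dashv V_k$ do the work — together with the small but genuine verification that the contraction and the operad identities place no contradictory demands on $f$. The dimension-$n$ case, by contrast, is a fairly direct application of Lemma \ref{contractible n-cells} and Lemma \ref{contractible n-morphisms}.
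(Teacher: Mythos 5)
Your proposal is correct and follows essentially the same route as the paper's own (much terser) proof: every $k$-cell of $\bs{I_n}$ for $k<n$ is a free operadic composite of identity cells and contraction cells, so a contraction-preserving morphism is both forced and well-defined on these, and dimension $n$ is handled by Lemma \ref{contractible n-morphisms}. Your additional observations — that the absence of relations licenses the free extension, and that $\text{id}_k$ and $\gamma(\text{id}_{k-1},\text{id}_{k-1},\iota)$ are distinct cells for $k<n$ so the two constraints on $f$ cannot conflict — are accurate fillings-in of details the paper leaves implicit.
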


\begin{proof}
Morphisms of $n$-globular operads preserve identities and composition, and the morphisms in $\mathbf{C \mhyphen GOp}_{\bs{n}}$ also preserve contractions.
It follows immediately from the definition above that when equipped with its canonical contraction, every $k$-cell of $\bs{I_n}$ for $k<n$ is a unique composite of identity cells and contraction cells. Moreover, by Lemma \ref{contractible n-morphisms}  every morphism of contractible $n$-globular operads is determined by its value on $k$-cells for all $k<n$. It follows that for any $n$-globular operad with contraction $\bs{G_n}$ there exists a unique morphism $\bs{I_n} \longrightarrow \bs{G_n}$ of $n$-globular operads with contraction.
\end{proof}

An algebra $\theta : \bs{A_{I_n}} \longrightarrow \bs{A}$ for $\bs{I_n}$ is a weak unbiased $n$-category with underlying $n$-globular set $\bs{A}$. For $k<n$, the generating $k$-cell $\chi:x \longrightarrow x'$ of $\bs{I_n}$ corresponding to the triple $(x,x',\tau)$ provides a specified operation directly composing $k$-pasting diagrams of shape $\tau$ in $\bs{A}$ into single $k$-cells of $\bs{A}$ in a way that is consistent with the operations composing their boundaries provided by $x$ and $x'$. The $k$-cells of $\bs{I_n}$ obtained by operadic composition of the generating cells provide composition operations on pasting diagrams in $\bs{A}$ derived from these basic ones. Since there are no $k$-cell relations in the presentation for $\bs{I_n}$, this compositions of $k$-pasting diagrams in $\bs{A}$ satisfies no axioms in dimensions $k<n$. In dimension $n$, the contractibility of $\bs{I_n}$ means that there exists a \textit{unique} $n$-cell $\Lambda:\chi \longrightarrow \chi'$ of $\bs{I_n}$ satisfying $i(\Lambda)=\pi$ for each triple $(\chi,\chi',\pi)$ where $\chi$ and $\chi'$ are parallel $(n-1)$-cells of $\bs{I_n}$ satisfying $i(\chi)=i(\chi')=\partial(\pi)$. As a result, there is a \text{unique} operation composing $n$-pasting diagrams of shape $\pi$ in $\bs{A}$ with respect to any composition of the $(n-1)$-dimensional boundary, so the Coherence Theorem \ref{the coherence theorem} is satisfied.

\begin{definition} 
A \textit{weak unbiased $n$-category} is an algebra for $\bs{I_n}$.
\end{definition}

Recall that a morphism $F:(\bs{A}, \theta) \longrightarrow (\bs{B}, \sigma)$ of algebras for an $n$-globular operad $\bs{G_n}$ is a morphism $F:\bs{A} \longrightarrow \bs{B}$ of the underlying $n$-globular sets \textit{strictly} preserving composition of pasting diagrams; see section \ref{algebras section}. 

\begin{definition} 
A \textit{strict $n$-functor} of weak unbiased $n$-categories is a morphism of algebras for $\bs{I_n}$.
\end{definition}

\begin{notation} We denote by $\mathbf{WkU}\  n \mhyphen \mathbf{Cat}_{str}$ the category of algebras for $\bs{I_n}$.
\end{notation}

To understand these definitions it is instructive to unpack them for low values of $n$. The first two cases, $n=0$ and $n=1$, are straightforward. The $0$-globular operad $\bs{I_0}$ consists of a single 0-cell, the identity $\text{id}_0$, so $\bs{I_0}$ is isomorphic to $\bs{T_0}$. The monad induced on $\mathbf{GSet_0}$ by $\bs{I_0}$ is isomorphic to the identity monad, and so an algebra for $\bs{I_0}$ is just a 0-globular set, or equivalently, a set.
Similarly, the $1$-globular operad $\bs{I_1}$ is isomorphic to $\bs{T_1}$, so the monad induced on $\mathbf{GSet_1}$ by $\bs{I_1}$ is isomorphic to the free category monad $(-)^*$, and an algebra for $\bs{I_1}$ is just an ordinary category. 
The lowest dimensional non-trivial case is the 2-globular operad $\bs{I_2}$ for weak unbiased 2-categories, or unbiased bicatgeories. %This example should be compared to the 2-globular operad for strict 2-categories; Example \ref{example of a contractible presentation}.

\begin{example}\label{I_2}
The 2-globular operad $\bs{I_2}$ for unbiased bicategories is the \textit{contractible} 2-globular operad with

\begin{list}{•}{}

\item a single 0-cell, the identity $\text{id}_0$; and

\item 1-cells consisting of the operadic composites of 1-cells $c^0, c^1, c^2, c^3, ...$ whose images under the underlying collection map are as follows.

\begin{center}
\begin{tikzpicture}[node distance=2cm, auto]

\node (A) {$\cdot$};
\node (B) [right of =A] {$\cdot$};

\draw[->] (A) to node {$c^0$} (B);

\node (P) [node distance=3cm, right of =B] {$\cdot$};

\node (b) [node distance=0.5cm, right of=B] {};
\node (p) [node distance=0.5cm, left of=P] {};
\draw[|->, dashed] (b) to node {$i$} (p);

\node (A') [node distance=9mm, below of=A] {$\cdot$};
\node (B') [right of =A'] {$\cdot$};

\draw[->] (A') to node {$c^1$} (B');

\node (P') [node distance=3cm, right of =B'] {$\cdot$};
\node (Q') [right of =P'] {$\cdot$};

\draw[->] (P') to node {} (Q');

\node (b') [node distance=0.5cm, right of=B'] {};
\node (p') [node distance=0.5cm, left of=P'] {};
\draw[|->, dashed] (b') to node {$i$} (p');

\node (A'') [node distance=9mm, below of=A'] {$\cdot$};
\node (B'') [right of =A''] {$\cdot$};

\draw[->] (A'') to node {$c^2$} (B'');

\node (P'') [node distance=3cm, right of =B''] {$\cdot$};
\node (Q'') [right of =P''] {$\cdot$};
\node (R'') [right of =Q''] {$\cdot$};

\draw[->] (P'') to node {} (Q'');
\draw[->] (Q'') to node {} (R'');

\node (b'') [node distance=0.5cm, right of=B''] {};
\node (p'') [node distance=0.5cm, left of=P''] {};
\draw[|->, dashed] (b'') to node {$i$} (p'');

\node (A''') [node distance=9mm, below of=A''] {$\cdot$};
\node (B''') [right of =A'''] {$\cdot$};

\draw[->] (A''') to node {$c^3$} (B''');

\node (P''') [node distance=3cm, right of =B'''] {$\cdot$};
\node (Q''') [right of =P'''] {$\cdot$};
\node (R''') [right of =Q'''] {$\cdot$};
\node (S''') [right of =R'''] {$\cdot$};

\draw[->] (P''') to node {} (Q''');
\draw[->] (Q''') to node {} (R''');
\draw[->] (R''') to node {} (S''');

\node (b''') [node distance=0.5cm, right of=B'''] {};
\node (p''') [node distance=0.5cm, left of=P'''] {};
\draw[|->, dashed] (b''') to node {$i$} (p''');

\node (X) [node distance=6mm, below of=b'''] {};
\node () [node distance=0.9cm, right of=X] {$\vdots$};

\end{tikzpicture}
\end{center}

\end{list}
\end{example}

An algebra $\theta:\bs{A_{I_2}} \longrightarrow \bs{A}$ for $\bs{I_2}$ is precisely an unbiased bicategory with underlying 2-globular set $\bs{A}$. For each $n \in \mathbb{N}$, $c^n$ provides an operation directly composing 1-pasting diagrams in $\bs{A}$ made up of $n$ 1-cells into single 1-cells, with $c^0$ providing 1-cell identites. For example, given 1-pasting diagrams

\begin{center}
\begin{tikzpicture}[node distance=2cm, auto]

\node (A) {$A$};
\node (B) [right of=A] {$A$};
\node (C) [right of=B] {$B$};
\node (D) [right of=C] {$C$};
\node (E) [right of=D] {$D$};

\draw[->] (B) to node {$a$} (C);
\draw[->] (C) to node {$b$} (D);
\draw[->] (D) to node {$c$} (E);

\end{tikzpicture}
\end{center}

\noindent in $\bs{A}$ we write $c^0(A) = 1_A$ and $c^3(a,b,c) = (abc)$,

\begin{center}
\begin{tikzpicture}[node distance=2.2cm, auto]

\node (A) {$A$};
\node (B) [right of=A] {$A$};
\node (C) [right of=B] {$A$};
\node (D) [node distance=2.5cm, right of=C] {$D$.};

\draw[->] (A) to node {$1_A$} (B);

\draw[->] (C) to node {$(abc)$} (D);

\end{tikzpicture}
\end{center}
Note that $c^1$ provides an operation composing simple 1-pasting diagrams in $\bs{A}$ (Definition \ref{trivlal pd}) that is \textit{distinct} from the `do nothing' operation on simple 1-pasting diagrams provided by the identity 1-cell $\text{id}_1$. The 1-cells of $\bs{I_2}$ obtained by operadic composition of the $c^n$s provide operations composing 1-pasting diagrams in $\bs{A}$ that are derived from these basic operations. For instance, given the 1-pasting diagram $(a,b,c)$ above we have $\big(c^3 \circ (c^2, \text{id}_1, c^0)\big)(a,b,c) = \big( (ab)c1_D \big)$ and $\big(c^3 \circ (c^2, c^1, c^0)\big)(a,b,c) = \big( (ab)(c)1_D \big)$. The contractibility of $\bs{I_2}$ means that for any 2-pasting diagram in $\bs{A}$ together with a composition of the 1-dimensional boundary, there exists a unique composite 2-cell that is consistent with this boundary composition.

\begin{notation} We denote by $\mathbf{UBicat}_{str}$ the category $\bs{I_2} \mhyphen \mathbf{Alg}$ of algebras for $\bs{I_2}$.
\end{notation}

There is a presentation for the globular operad $\bs{I}$ for weak unbiased $\omega$-categories analogous to the one given for $\bs{I_n}$ in Definition \ref{presentation for I_n}. Like the $n$-dimensional case, it follows immediately from the definition that $\bs{I}$ is contractible, and that the lemma below holds.

\begin{lemma} When equipped with its canonical contraction the globular operad $\bs{I}$ for weak unbiased $\omega$-categories is initial in the category $\mathbf{C \mhyphen GOp}$ of globular operads with contraction.
\end{lemma}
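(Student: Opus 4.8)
The proof should mirror, essentially word for word, the proof already given for the finite-dimensional analogue (the lemma that $\bs{I_n}$ is initial in $\mathbf{C \mhyphen GOp}_{\bs{n}}$), since the statement for $\bs{I}$ is the same claim with the truncation to $n$ dimensions removed. The plan is to produce a morphism $\bs{I} \longrightarrow \bs{G}$ in $\mathbf{C \mhyphen GOp}$ for an arbitrary globular operad with contraction $\bs{G}$, and then to show it is the unique such morphism.

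First I would recall that a morphism in $\mathbf{C \mhyphen GOp}$ must preserve identities, operadic composition, the underlying collection maps, and the chosen contractions. The key structural observation, immediate from Definition \ref{presentation for I}, is that when $\bs{I}$ is equipped with its canonical contraction, every $n$-cell of $\bs{I}$ is built by operadic composition from identity cells and contraction cells: the generating $n$-cells are precisely the contraction cells, and the $n$-cells of $\bs{I}$ are by construction the operadic composites of these generators together with the identities. This is the analogue of the corresponding step in the $\bs{I_n}$ proof, and it needs no extra argument here because $\bs{I}$ has no top dimension to treat separately — every dimension behaves like the ``$k<n$'' dimensions did.

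Next I would define the candidate morphism $\bs{I} \longrightarrow \bs{G}$ by induction on dimension. In dimension $0$ both operads have a single $0$-cell $\text{id}_0$, which must be preserved, so the map is forced. Assuming the map is defined and forced on all $k$-cells for $k < n$, each generating $n$-cell $\chi : x \longrightarrow x'$ of $\bs{I}$ with $i(\chi) = \tau$ must be sent to a cell of $\bs{G}$ with the same source, target and underlying collection value as the image of $(x,x',\tau)$; since $\bs{G}$ carries a specified contraction and a contraction-preserving morphism must send contraction cells to contraction cells, this forces $\chi$ to be sent to the contraction cell of $\bs{G}$ determined by that triple. Operadic composites of generators are then sent to the corresponding operadic composites in $\bs{G}$, using preservation of composition; because every $n$-cell of $\bs{I}$ is such a composite, this defines the map on all of $\bs{I}$, and one checks routinely that it respects identities, composition, the collection maps and the contraction, so it is a morphism in $\mathbf{C \mhyphen GOp}$.

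For uniqueness, the argument is that any two morphisms $\bs{I} \longrightarrow \bs{G}$ in $\mathbf{C \mhyphen GOp}$ agree on $\text{id}_0$, must agree on the contraction cells (preservation of contraction), and hence agree on all operadic composites of these, which exhaust $\bs{I}$; so they coincide. The cleanest way to package this is to invoke an analogue of Lemma \ref{contractible n-morphisms}: a morphism out of a contractible globular operad built entirely from identities and contraction cells is determined by its behaviour on those generators, and the generators must go to the contraction cells. The main obstacle — really the only place one must be slightly careful — is that the truncation argument used in the $\bs{I_n}$ case relied on Lemma \ref{contractible n-morphisms} and on $\bs{G_n}$ satisfying the extra top-dimensional condition in Definition \ref{contractible n-glob ops}; in the $\omega$ case there is no top dimension and no such condition, so I would state the infinite-dimensional version of the ``morphism determined on $k$-cells'' principle (or simply argue directly by induction on dimension as above), checking that the inductive construction is well-defined at each stage precisely because $\bs{I}$'s cells in each dimension are freely generated operadic composites of identities and contraction cells with no relations to verify.
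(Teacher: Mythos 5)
Your proposal is correct and follows exactly the strategy the paper uses for the finite-dimensional analogue (the lemma that $\bs{I_n}$ is initial in $\mathbf{C \mhyphen GOp}_{\bs{n}}$); the paper itself omits the proof for $\bs{I}$, and your adaptation supplies it in the intended way. You are also right to flag the one genuine difference: the appeal to Lemma \ref{contractible n-morphisms} and the top-dimensional condition of Definition \ref{contractible n-glob ops} is unnecessary here, since in the $\omega$-case every dimension is handled by the same inductive step of sending each generating contraction cell to the contraction cell of $\bs{G}$ determined by the image of its boundary triple.
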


\begin{definition} 
A \textit{weak unbiased $\omega$-category} is an algebra for $\bs{I}$.
\end{definition}

%\begin{definition} 
%A \textit{strict $\omega$-functor} of weak unbiased $\omega$-categories is a morphism of algebras for $\bs{I}$.
%\end{definition}

\begin{notation} We denote by $\mathbf{WkU} \ \omega \mhyphen \mathbf{Cat}_{str}$ the category of algebras for $\bs{I}$.
\end{notation}

\section{Globular operads for higher categories}\label{glob ops for higher cats section}

Recall that the globular operad $\bs{T}$ for strict $\omega$-categories is given by equipping the terminal globular collection $1:\bs{1}^* \longrightarrow \bs{1}^*$ with its unique operad structure; see Example \ref{GlobOp T for strict w cats}. This means that $\bs{T}$ contains exactly one $n$-cell for each $n$-cell of $\bs{1}^*$, and therefore provides exactly one way to compose an $n$-pasting diagram of any given shape. Meanwhile, the globular operad $\bs{I}$ for weak unbiased $\omega$-categories has a unique $n$-cell for every possible way to compose an $n$-pasting diagram in an $\omega$-category. Intuitively, any globular operad that lies somewhere between $\bs{I}$ and $\bs{T}$ should define a sensible theory of $\omega$-category. A similar statement is true for $n$-globular operads and $n$-categories. In this section we construct a category $\mathbf{Cat \mhyphen GOp}$ of globular operads for $\omega$-categories, and a similar category $\mathbf{Cat \mhyphen GOp}_{\bs{n}}$ of $n$-globular operads for $n$-categories.

\begin{definition} We say that a morphism $f:\bs{G} \longrightarrow \bs{H}$ of globular sets is 
\begin{enumerate}[i)]
\item \textit{surjective on 0-cells} if the function $f_0:G_0 \longrightarrow H_0$ is a surjection;
\item \textit{$k$-full} (respectively, \textit{$k$-faithful}) for $k > 0$ if the restriction $G_k(x,x') \longrightarrow H_k(f_{k-1}(x),f_{k-1}(x'))$ of $f_k:G_k \longrightarrow H_k$ is a surjection (respectively, an injection) for each parallel pair $(x,x')$ of $(k {-} 1)$-cells of $\bs{G}$; and 
\item \textit{full} (respectively \textit{faithful}) if it is $k$-full (respectively, $k$-faithful) for all $k > 0$.
\end{enumerate}
\end{definition}

\begin{lemma}\label{contractible0Full} A globular operad $\bs{G}$ with underlying collection $g:\bs{G} \longrightarrow \bs{1}^*$ is contractible if and only if $g$ is full.
\end{lemma}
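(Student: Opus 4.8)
The plan is to unwind both sides of the iff through the definitions of contractibility and fullness, which are almost mirror images once we pick out the right cells.

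First I would recall the two notions. A globular operad $\bs{G}$ with underlying collection $g\colon \bs{G} \longrightarrow \bs{1}^*$ is contractible iff there is a contraction function, i.e. a function assigning to every triple $(x,x',\tau)$, where $(x,x')$ is a parallel pair of $n$-cells of $\bs{G}$ with $g(x)=g(x')=\partial\tau$, an $(n{+}1)$-cell $\chi\colon x \longrightarrow x'$ of $\bs{G}$ with $g(\chi)=\tau$. Meanwhile $g$ is full iff for every $k>0$ and every parallel pair $(x,x')$ of $(k{-}1)$-cells of $\bs{G}$, the restricted map $G_k(x,x') \longrightarrow (\bs{1}^*)_k\big(g(x),g(x')\big)$ is surjective. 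The key observation is that $(\bs{1}^*)_k\big(g(x),g(x')\big)$ is, up to the identification given by Notation \ref{boundary notation}, exactly the set of $k$-cells $\tau$ of $\bs{1}^*$ with $\partial\tau = g(x) = g(x')$; surjectivity onto this set says precisely that for each such $\tau$ there is some $k$-cell $\chi$ of $\bs{G}$ with source $x$, target $x'$ and $g(\chi)=\tau$.

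For the forward direction, suppose $\bs{G}$ is contractible, so fix a contraction function. Given $k>0$ and a parallel pair $(x,x')$ of $(k{-}1)$-cells with $g(x)=g(x')$, and given any $\tau \in (\bs{1}^*)_k\big(g(x),g(x')\big)$, note that $\partial\tau = g(x) = g(x')$, so $(x,x',\tau)$ is a valid input to the contraction function, which returns a $k$-cell $\chi\colon x \longrightarrow x'$ with $g(\chi)=\tau$. Hence $g$ is $k$-full for every $k>0$, i.e. full. For the converse, suppose $g$ is full. Then for every triple $(x,x',\tau)$ with $(x,x')$ a parallel pair of $n$-cells satisfying $g(x)=g(x')=\partial\tau$, fullness in dimension $n{+}1$ at the parallel pair $(x,x')$ gives at least one $(n{+}1)$-cell $\chi\colon x\longrightarrow x'$ with $g(\chi)=\tau$; using the axiom of choice, make one such choice for each triple. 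This assignment is by construction a contraction function, so $\bs{G}$ is contractible.

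The only genuine point to get right — and the place I expect the main friction — is the identification of the fibre $(\bs{1}^*)_k\big(g(x),g(x')\big)$ with $\{\tau \in (\bs{1}^*)_k : \partial\tau = g(x)=g(x')\}$, which rests on the fact that in $\bs{1}^*$ the source and target of any cell coincide (hence the $\partial$ notation of Notation \ref{boundary notation}): a $k$-cell $\tau$ of $\bs{1}^*$ lies ``between'' the $(k{-}1)$-cells $g(x)$ and $g(x')$ precisely when $\partial\tau$ equals their common value. Once this bookkeeping is in place, the equivalence is a direct translation, and in particular the statement about parallel pairs of $0$-cells is vacuous on both sides since all $0$-cells are parallel and $(\bs{1}^*)_0$ is a singleton. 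I would also remark that no operad structure on $\bs{G}$ is actually used — the statement is really about the underlying collection — but since the definitions are phrased for globular operads I would keep the phrasing as stated.
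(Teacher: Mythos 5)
Your argument is correct and is the evident definitional unwinding that the paper intends (it states Lemma \ref{contractible0Full} without proof): the whole content is your identification of the fibre $1^*_k\big(g(x),g(x')\big)$ with $\{\tau : \partial\tau = g(x)=g(x')\}$, using that source and target coincide in $\bs{1}^*$, after which one direction reads off the contraction and the other chooses preimages. One small wording caveat: at $n=0$ the condition is not actually vacuous --- it still demands, for every pair of $0$-cells $x,x'$ of $\bs{G}$ and every $1$-cell $\tau$ of $\bs{1}^*$, some $1$-cell $x\to x'$ over $\tau$; it is only the side condition $g(x)=g(x')$ that holds automatically there, and your main argument does handle this case correctly.
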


\begin{lemma}\label{full+surj=split epi}
If a morphism of globular sets is surjective on 0-cells and full, then it is a split epimorphism. 
\end{lemma}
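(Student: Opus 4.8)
The plan is to build a section $s : \bs{H} \longrightarrow \bs{G}$ dimension by dimension, using the surjectivity on 0-cells to start and the fullness at each higher level to continue, while arranging that $s$ preserves sources and targets so that it is a genuine morphism of globular sets and a one-sided inverse to $f$. First I would choose, for each 0-cell $y \in H_0$, some 0-cell $s_0(y) \in G_0$ with $f_0(s_0(y)) = y$; this is possible precisely because $f$ is surjective on 0-cells. Since $H_0$ carries no source/target data, no compatibility condition is needed at this stage.

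Next I would proceed inductively on $k \geqslant 1$, assuming that $s_{k-1}, \ldots, s_0$ have been constructed so that $f_j s_j = \mathrm{id}_{H_j}$ for $j < k$ and $s_{k-1}$ commutes with $s$ and $t$ (making $s_{\leqslant k-1}$ a morphism of the truncated globular sets). Given a $k$-cell $y \in H_k$, write $u = s(y)$ and $v = t(y)$, which are $(k-1)$-cells of $\bs{H}$. I would consider the pair $\big(s_{k-1}(u), s_{k-1}(v)\big)$ of $(k-1)$-cells of $\bs{G}$; since $s_{k-1}$ preserves sources and targets, and $u, v$ are themselves parallel whenever $k \geqslant 2$ (they share source and target because $y$ is a $k$-cell of a globular set), the cells $s_{k-1}(u)$ and $s_{k-1}(v)$ form a parallel pair in $\bs{G}$. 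For $k = 1$ every pair of $0$-cells is parallel, so this holds trivially. Now $f_{k-1}\big(s_{k-1}(u)\big) = u = s(y)$ and similarly $f_{k-1}\big(s_{k-1}(v)\big) = v = t(y)$, so $y$ lies in the hom-set $H_k\big(f_{k-1}(s_{k-1}(u)), f_{k-1}(s_{k-1}(v))\big)$. By $k$-fullness of $f$, the restriction $G_k\big(s_{k-1}(u), s_{k-1}(v)\big) \longrightarrow H_k\big(f_{k-1}(s_{k-1}(u)), f_{k-1}(s_{k-1}(v))\big)$ is surjective, so I may pick $s_k(y)$ in $G_k\big(s_{k-1}(u), s_{k-1}(v)\big)$ with $f_k(s_k(y)) = y$. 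By construction $s(s_k(y)) = s_{k-1}(u) = s_{k-1}(s(y))$ and $t(s_k(y)) = s_{k-1}(v) = s_{k-1}(t(y))$, so the source/target compatibility is maintained, and $f_k s_k = \mathrm{id}_{H_k}$.

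Finally I would observe that the collection $\{s_k\}_{k \in \mathbb{N}}$ assembled this way is a morphism $s : \bs{H} \longrightarrow \bs{G}$ of globular sets (the globularity equations $ss = st$, $ts = tt$ for $s$ follow from those for $\bs{H}$ together with the fact that each $s_k$ commutes with the source and target maps), and it satisfies $f \circ s = \mathrm{id}_{\bs{H}}$ by construction. Hence $f$ is a split epimorphism. I do not expect a serious obstacle here: the only point requiring a moment's care is confirming that $s_{k-1}(u)$ and $s_{k-1}(v)$ genuinely form a \emph{parallel} pair so that the $k$-fullness hypothesis applies, which is exactly where the inductive preservation of sources and targets by $s$ gets used — so the induction hypothesis must explicitly carry the statement that $s_{\leqslant k-1}$ is a morphism of truncated globular sets, not merely a family of set-theoretic sections.
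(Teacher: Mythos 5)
Your proof is correct and follows essentially the same route as the paper's: an induction on dimension, choosing a set-theoretic section of $f_0$ at the base, and at each higher stage using fullness at a parallel pair of lifted boundary cells to lift each cell compatibly with sources and targets. Your explicit remark that the induction hypothesis must include source/target preservation of the partial section (so that the lifted boundaries form a genuinely parallel pair, as the definition of $k$-fullness requires) is exactly the point the paper's proof also relies on, stated there via the commuting source and target squares in the induction hypothesis.
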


\begin{proof} Let $f:\bs{G} \longrightarrow \bs{H}$ be surjective on 0-cells and full. 
We show by induction on $n$ that there exists a morphism $f':\bs{H} \longrightarrow \bs{G}$ of globular sets such that $f_nf'_n=1_{H_n}$ for all $n \in \mathbb{N}$. For the base case, $f_0: G_0 \longrightarrow H_0$ is a surjection of sets by assumption, so we can choose an injective function $f'_0: H_0 \longrightarrow G_0$ such that $f_0f'_0 = 1_{H_0}$. Next, assume that for all $k$, $0 \leqslant k \leqslant n$, there exists a function $f'_k: H_k \longrightarrow G_k$ for which $f_kf'_k = 1_{H_k}$ and such that the sources and targets are preserved, i.e., the diagrams below commute.

\begin{center}
\begin{tikzpicture}[node distance=2cm, auto]

\node (X) {$G_k$};
\node (Y) [below of=X] {$G_{k-1}$};
\node (U) [node distance=2.5cm, left of=X] {$H_k$};
\node (V) [below of=U] {$H_{k-1}$};

\draw[->] (X) to node{$s$} (Y);
\draw[->] (U) to node [swap] {$s$} (V);
\draw[->] (U) to node {$f'_k$} (X);
\draw[->] (V) to node [swap] {$f'_{k-1}$} (Y);

\node (X') [node distance=5cm, right of =X] {$G_k$};
\node (Y') [below of=X'] {$G_{k-1}$.};
\node (U') [node distance=2.5cm, left of=X'] {$H_k$};
\node (V') [below of=U'] {$H_{k-1}$};

\draw[->] (X') to node{$t$} (Y');
\draw[->] (U') to node [swap] {$t$} (V');
\draw[->] (U') to node {$f'_k$} (X');
\draw[->] (V') to node [swap] {$f'_{k-1}$} (Y');

\end{tikzpicture}
\end{center}

\noindent Here we define $G_{-1} = H_{-1}$ to be the terminal set consisting of a single element. We need to construct a function $f'_{n+1}:H_{n+1} \longrightarrow G_{n+1}$ satisfying $f_{n+1}f'_{n+1} = 1_{H_{n+1}}$ and preserving the sources and targets. Let $\psi:y \longrightarrow y'$ be an $(n{+}1)$-cell of $\bs{H}$. Since $f$ is full, the restriction 

\begin{center}
\begin{tikzpicture}[node distance=6.5cm, auto]
\node (X) {$G_{n+1}(f'_n(y), \, f'_n(y'))$};
\node (Y) [right of=X] {$H_{n+1}(f_nf'_n(y), \, f_nf'_n(y')) = H_{n+1}(y,y')$};
\draw[->] (X) to node {$f_{n+1}$} (Y);
\end{tikzpicture}
\end{center}
\noindent of $f_{n+1}$ is a surjection, so we can choose an $(n{+}1)$-cell $\chi:f'_n(y) \longrightarrow f_n'(y')$ of $\bs{G}$ such that $f_{n+1}(\chi) = \psi$. Define $f'_{n+1}(\psi)=\chi$, then $f_{n+1}f'_{n+1}(\psi)=\psi$ and the source and target diagrams for $f_{n+1}$ commute at $\psi$ by construction.
\end{proof}

\begin{definition}\label{def of A_f} Let $f:\bs{G} \longrightarrow \bs{H}$ be a morphism of globular collections and let $\bs{A}$ be a globular set. The morphism $\bs{A}_f:\bs{A}_{\bs{G}} \longrightarrow \bs{A}_{\bs{H}}$ of globular sets is the unique morphism making the diagram below commute.
\begin{center}
\begin{tikzpicture}[node distance=1.4cm, auto]

\node (X) {$\bs{A}_{\bs{H}}$};
\node (A) [left of=X, below of=X] {$\bs{A}^*$};
\node (B) [right of=X, below of=X] {$\bs{H}$};
\node (C) [node distance=2.8cm, below of=X] {$\bs{1}^*$};

\draw[->] (X) to node [swap] {} (A);
\draw[->] (X) to node {} (B);
\draw[->] (A) to node [swap] {$!^*$} (C);
\draw[->] (B) to node {$h$} (C);

\node (Y) [node distance=2.2cm, above of=X] {$\bs{A}_{\bs{G}}$};
\node (Q) [right of=Y, below of=Y] {$\bs{G}$};

\draw[->] (Y) to node {} (Q);
\draw[->, bend right=30] (Y) to node [swap] {} (A);
\draw[->] (Q) to node {$f$} (B);

\draw[->, dashed] (Y) to node [swap] {$\bs{A}_f$} (X); 

\end{tikzpicture}
\end{center}
\end{definition}

\begin{lemma}\label{induced functor between alg cats} A morphism $f:\bs{G} \longrightarrow \bs{H}$ of globular operads induces a faithful functor $f:\bs{H} \mhyphen \mathbf{Alg} \longrightarrow \bs{G} \mhyphen \mathbf{Alg}$ sending an algebra $(\bs{A}, \theta)$ for $\bs{H}$ to the algebra $(\bs{A}, \hspace{0.5mm} \theta \cdot \bs{A}_f)$ for $\bs{G}$.
\end{lemma}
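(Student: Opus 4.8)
The plan is to verify directly that the assignment $(\bs{A},\theta) \mapsto (\bs{A}, \theta \cdot \bs{A}_f)$ gives a well-defined algebra, that it extends to a functor, and that this functor is faithful. First I would check that $\theta \cdot \bs{A}_f : \bs{A}_{\bs{G}} \longrightarrow \bs{A}$ really is a $\bs{G}$-algebra structure. This requires the unit and multiplication axioms for $(-)_{\bs{G}}$. Both follow formally from the corresponding axioms for $(-)_{\bs{H}}$ together with the fact that $f$ is a morphism of globular operads: concretely, the family of morphisms $\bs{A}_f : \bs{A}_{\bs{G}} \longrightarrow \bs{A}_{\bs{H}}$ from Definition \ref{def of A_f} assembles into a morphism of monads $(-)_{\bs{G}} \Rightarrow (-)_{\bs{H}}$. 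The quickest route is to observe that since $f$ preserves identities and operadic composition, the naturality square of $\bs{A}_f$ against the units and against the multiplications commute; this is checked on $n$-cells using the explicit descriptions of $unit_{\bs{A}}$ and $mult_{\bs{A}}$ given in Section \ref{algebras section}, where $unit_{\bs{A}}(\alpha) = \big((\alpha), \text{id}_n\big)$ and $\bs{A}_f$ simply relabels the operad component via $f$. Granting that $\bs{A}_f$ is a monad morphism, the fact that restriction of scalars along a monad morphism sends algebras to algebras is standard, and gives exactly $(\bs{A}, \theta \cdot \bs{A}_f)$.

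Next I would define the functor on morphisms. Given a morphism $F : (\bs{A},\theta) \longrightarrow (\bs{B},\sigma)$ of $\bs{H}$-algebras, I claim the same underlying globular set map $F$ is a morphism of the associated $\bs{G}$-algebras. This amounts to the commutativity of the square with sides $\theta \cdot \bs{A}_f$, $\sigma \cdot \bs{B}_f$, $F_{\bs{G}}$, $F$; it follows by pasting the naturality square $\bs{B}_f \cdot F_{\bs{G}} = F_{\bs{H}} \cdot \bs{A}_f$ (which holds because $\bs{A}_f$ is natural in $\bs{A}$, itself a consequence of the universal property in Definition \ref{def of A_f}) with the square expressing that $F$ is an $\bs{H}$-algebra morphism. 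Functoriality—preservation of identities and composites—is then immediate since the underlying map of globular sets is unchanged.

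Finally, faithfulness. Since the induced functor acts as the identity on underlying globular set morphisms, two $\bs{H}$-algebra morphisms $F, F'$ with the same image have the same underlying map $\bs{A} \longrightarrow \bs{B}$, hence are equal as $\bs{H}$-algebra morphisms. So the functor is faithful, indeed injective on objects up to the underlying globular set and injective on hom-sets.

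The main obstacle is the first step: showing cleanly that $\bs{A}_f$ is a morphism of monads, i.e. that it is compatible with both the unit and the multiplication of the two monads. The multiplication compatibility is the delicate part, because $mult_{\bs{A}}$ is defined via the map $j : (\bs{A}_{\bs{G}})_{\bs{G}} \longrightarrow \bs{G} \circ \bs{G}$ and the operad composition $comp$, so one must trace through the pullback definitions and invoke that $f$ commutes with $comp$ (the second unpacked condition in the definition of a morphism of globular operads). I expect this to reduce to a diagram chase using the universal property of the relevant pullbacks, with the key input being $g = h \circ f$ and $f \circ comp_{\bs{G}} = comp_{\bs{H}} \circ (f \circ f)$; everything else is bookkeeping.
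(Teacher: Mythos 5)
Your proposal is correct and follows essentially the same route as the paper: the paper's proof simply asserts that the unit and multiplication axioms for $\theta \cdot \bs{A}_f$, and the algebra-morphism square for $F$, are verified via the universal property of pullbacks, which is exactly the diagram chase you describe (packaging $\bs{A}_f$ as a monad morphism $(-)_{\bs{G}} \Rightarrow (-)_{\bs{H}}$ and then restricting along it is just a tidier organization of the same verification, with the key inputs being $g = h \cdot f$ and the compatibility of $f$ with $ids$ and $comp$). Your faithfulness argument --- the induced functor is the identity on underlying globular set maps --- is likewise the one the paper intends.
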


\begin{proof} The fact that $\theta \cdot \bs{A}_f$ satisfies the unit and multiplication axioms for $\bs{G}$-algebras can be shown using the universal property of pullbacks. The same property is used to verify that any morphism $F: (\bs{A}, \theta) \longrightarrow (\bs{B}, \sigma)$ of $\bs{H}$-algebras is also a morphism $F:(\bs{A}, \, \theta \cdot \bs{A}_f) \longrightarrow (\bs{B}, \, \sigma \cdot \bs{B}_f)$ of $\bs{G}$-algebras, so $f$ is well-defined on morphisms and faithful.
\end{proof}

\begin{proposition}\label{split epi prop} If a morphism $\bs{G} \longrightarrow \bs{H}$ of globular operads is a split epimorphism on the underlying globular sets then the induced functor $\bs{H} \mhyphen \mathbf{Alg} \longrightarrow \bs{G} \mhyphen \mathbf{Alg}$ is injective on objects and full.
\end{proposition}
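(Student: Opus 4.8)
The plan is to reduce both assertions to a single lemma: for every globular set $\bs{A}$ the canonical morphism $\bs{A}_f : \bs{A}_{\bs{G}} \longrightarrow \bs{A}_{\bs{H}}$ of Definition \ref{def of A_f} is a split epimorphism of globular sets, and in particular an epimorphism.

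First I would observe that, although $f : \bs{G} \longrightarrow \bs{H}$ is only assumed to be a split epimorphism of the \emph{underlying} globular sets, its section is automatically compatible with the collection structure. Indeed, let $s : \bs{H} \longrightarrow \bs{G}$ be a morphism of globular sets with $fs = 1_{\bs{H}}$. Since $f$ is a morphism of globular collections we have $g = hf$, hence $gs = hfs = h$, so $s$ is itself a morphism of globular collections (though not, in general, of globular operads). Next I would record that the assignment $\bs{A}_{(-)}$ of Definition \ref{def of A_f} is functorial in the collection morphism and natural in $\bs{A}$; both statements follow from the uniqueness clauses in the universal properties of the pullbacks $\bs{A}_{\bs{G}}$ and $\bs{A}_{\bs{H}}$, since in each case the two maps being compared make the same defining diagram commute. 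Applying functoriality to the equation $fs = 1_{\bs{H}}$ gives $\bs{A}_f \cdot \bs{A}_s = 1_{\bs{A}_{\bs{H}}}$, which establishes the lemma.

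Injectivity on objects is then immediate. The functor $f : \bs{H} \mhyphen \mathbf{Alg} \longrightarrow \bs{G} \mhyphen \mathbf{Alg}$ does not change the underlying globular set, so if two $\bs{H}$-algebras $(\bs{A},\theta)$ and $(\bs{A}',\theta')$ have the same image then $\bs{A} = \bs{A}'$ and $\theta \cdot \bs{A}_f = \theta' \cdot \bs{A}_f$ as morphisms $\bs{A}_{\bs{G}} \longrightarrow \bs{A}$; cancelling the epimorphism $\bs{A}_f$ yields $\theta = \theta'$. For fullness, suppose $(\bs{A},\theta)$ and $(\bs{B},\sigma)$ are $\bs{H}$-algebras and $F$ is a morphism between their images in $\bs{G} \mhyphen \mathbf{Alg}$, that is, $F : \bs{A} \longrightarrow \bs{B}$ satisfies $F \cdot (\theta \cdot \bs{A}_f) = (\sigma \cdot \bs{B}_f) \cdot F_{\bs{G}}$. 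Using associativity, this equation, and the naturality of $\bs{A}_f$ in the form $\bs{B}_f \cdot F_{\bs{G}} = F_{\bs{H}} \cdot \bs{A}_f$, I would compute
\[ (F \cdot \theta) \cdot \bs{A}_f = F \cdot (\theta \cdot \bs{A}_f) = (\sigma \cdot \bs{B}_f) \cdot F_{\bs{G}} = \sigma \cdot (F_{\bs{H}} \cdot \bs{A}_f) = (\sigma \cdot F_{\bs{H}}) \cdot \bs{A}_f . \]
Cancelling $\bs{A}_f$ gives $F \cdot \theta = \sigma \cdot F_{\bs{H}}$, which says precisely that $F$ is already a morphism of $\bs{H}$-algebras $(\bs{A},\theta) \longrightarrow (\bs{B},\sigma)$, and this is the preimage required for fullness.

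The only technical work is the functoriality and naturality of $\bs{A}_{(-)}$, and I do not expect that to present a real obstacle: each is a short diagram chase with the defining pullbacks, of the same flavour as the verifications in the proof of Lemma \ref{induced functor between alg cats}. The remaining content is purely the formal cancellation of the split epimorphism $\bs{A}_f$, so once the lemma is in place the proposition follows with no further input.
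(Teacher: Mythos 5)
Your proposal is correct and follows essentially the same route as the paper: produce a section $\bs{A}_s$ of $\bs{A}_f$ from the section of $f$ and then cancel the resulting (split) epimorphism to get both injectivity on objects and fullness. You additionally make explicit two points the paper leaves implicit — that the section of $f$ is automatically a morphism of globular collections (so that $\bs{A}_s$ is well defined via Definition \ref{def of A_f}) and the naturality computation underlying the fullness claim — which is a welcome sharpening rather than a deviation.
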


\begin{proof} Let $f:\bs{G} \longrightarrow \bs{H}$ be such a morphism. By assumption there exists a morphism $f':\bs{H} \longrightarrow \bs{G}$ of globular sets such that $ff'=1_{\bs{H}}$. For each globular set $\bs{A}$ this induces a morphism $\bs{A}_{f'}:\bs{A}_{\bs{H}} \longrightarrow \bs{A}_{\bs{G}}$ of globular sets satisfying $\bs{A}_{f} \bs{A}_{f'} = 1_{\bs{A_H}}$. Given $\bs{H}$-algebra structures $\theta$ and $\phi$ on $\bs{A}$ for which the composites $\theta \cdot \bs{A}_f$ and $\phi \cdot \bs{A}_f$ are equal
we can precompose with $\bs{A}_{f'}$ to get $\theta = \phi$, so the functor $f:\bs{H} \mhyphen \mathbf{Alg} \longrightarrow \bs{G} \mhyphen \mathbf{Alg}$ is injective on objects. The fact that $f$ is full can be shown similarly.
\end{proof}

Recall that $\bs{I}$ denotes the globular operad for weak unbiased $\omega$-categories; see Section \ref{weak unbiased section}.

\begin{lemma}\label{GlobOps for higher cats are contractible} Let $\bs{G}$ be a globular operad. If there exists a morphism $u:\bs{I} \longrightarrow \bs{G}$ of globular operads which is surjective on 0-cells and full on the underlying globular sets, then $\bs{G}$ is contractible.
\end{lemma}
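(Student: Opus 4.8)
The plan is to reduce everything to a fullness statement via Lemma~\ref{contractible0Full}: a globular operad is contractible if and only if its underlying collection map is full, so it suffices to show that the underlying collection $g:\bs{G}\longrightarrow\bs{1}^*$ of $\bs{G}$ is $k$-full for every $k>0$. I would begin by extracting the two inputs the hypotheses give us. First, the globular operad $\bs{I}$ for weak unbiased $\omega$-categories is contractible (see Definition~\ref{presentation for I} and the remark following it), so by Lemma~\ref{contractible0Full} its underlying collection $i:\bs{I}\longrightarrow\bs{1}^*$ is full. Second, since $u:\bs{I}\longrightarrow\bs{G}$ is surjective on $0$-cells and full on underlying globular sets, Lemma~\ref{full+surj=split epi} tells us $u$ is a split epimorphism of globular sets; fix a section $u':\bs{G}\longrightarrow\bs{I}$, a morphism of globular sets with $u\circ u'=1_{\bs{G}}$. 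Because $u$ is a morphism of globular operads the triangle $g\circ u=i$ commutes, and precomposing with the section gives $g = g\circ u\circ u' = i\circ u'$.

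The main step is then to verify $k$-fullness of $g$ directly, and this is where the section does the work. Given a parallel pair $(y,y')$ of $(k{-}1)$-cells of $\bs{G}$, I would pick an arbitrary $k$-cell $\tau$ of $\bs{1}^*$ with $\partial\tau = g(y) = g(y')$ — if no such $\tau$ exists the relevant hom-set $\bs{1}^*_k\big(g(y),g(y')\big)$ is empty and $k$-fullness holds vacuously at $(y,y')$. Since $u'$ is a morphism of globular sets it preserves sources and targets, so $\big(u'(y),u'(y')\big)$ is a parallel pair of $(k{-}1)$-cells of $\bs{I}$, and $i(u'(y)) = g(y) = \partial\tau = g(y') = i(u'(y'))$. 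Fullness of $i$ now produces a $k$-cell $\chi:u'(y)\longrightarrow u'(y')$ of $\bs{I}$ with $i(\chi)=\tau$. Applying $u$, the cell $u(\chi)$ of $\bs{G}$ has source $u(u'(y)) = y$, target $u(u'(y')) = y'$, and $g(u(\chi)) = i(\chi) = \tau$; hence the restriction of $g_k$ to $\bs{G}_k(y,y')$ surjects onto $\bs{1}^*_k\big(g(y),g(y')\big)$. As $k>0$ and the parallel pair were arbitrary, $g$ is full, and Lemma~\ref{contractible0Full} yields that $\bs{G}$ is contractible.

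I do not expect a genuine obstacle here: once Lemmas~\ref{contractible0Full} and \ref{full+surj=split epi} are in hand the argument is bookkeeping. The two points needing a little care are the degenerate case where $\bs{1}^*$ has no $k$-cell with the prescribed boundary (handled by the vacuous clause above), and the observation that $u'$ need only be a morphism of \emph{globular sets} — which is exactly what Lemma~\ref{full+surj=split epi} supplies — since all we use about it is that it preserves the globular structure and splits $u$; no compatibility with the contraction on $\bs{I}$ or with the monoid structures is required, because the conclusion only asserts the bare existence of a contraction function on $\bs{G}$.
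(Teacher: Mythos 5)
Your proposal is correct and follows essentially the same route as the paper: reduce to fullness of $g$ via Lemma \ref{contractible0Full}, use Lemma \ref{full+surj=split epi} to obtain a section $u'$ of $u$, transport a parallel pair along $u'$ into $\bs{I}$, lift $\tau$ using fullness of $i$, and push the resulting cell back along $u$ using $g\circ u = i$. The only differences are cosmetic (your explicit identity $g = i\circ u'$ and the remark about the vacuous case).
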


\begin{proof} By Lemma \ref{contractible0Full} it is enough to show that the underlying collection map $g:\bs{G} \longrightarrow \bs{1}^*$ is full. By assumption, there exists morphism $u:\bs{I} \longrightarrow \bs{G}$ of globular sets which is surjective on 0-cells and full such that the following diagram commutes.
\vspace{-1mm}
\begin{center}
\begin{tikzpicture}[node distance=2.8cm, auto]

\node (X) {$\bs{I}$};
\node (Y) [right of=X] {$\bs{G}$};
\node (Z) [node distance=1.4cm, below of=X, right of=X] {$\bs{1}^*$};

\draw[->] (X) to node {$u$} (Y);
\draw[->] (Y) to node {$g$} (Z);
\draw[->] (X) to node [swap] {$i$} (Z);

\end{tikzpicture}
\end{center}
Additionally, by Lemma \ref{full+surj=split epi} there exists a morphism $u':\bs{G} \longrightarrow \bs{I}$ of globular sets such that $uu'=1_{\bs{G}}$. This means that for any parallel pair $(x,x')$ of $n$-cells $\bs{G}$ we have a parallel pair $(u'_n(x), u'_n(x')) = (y,y')$ of $n$-cells of $\bs{I}$ such that $(x,x')=(u_nu'_n(x), u_nu'_n(x'))=(u_n(y), u_n(y'))$. The restriction
\begin{center}
\begin{tikzpicture}[node distance=4.5cm, auto]

\node (X) {$G_{n+1}(x,x')$};
\node (Y) [right of=X] {$1^*_{n+1}(g_n(x), \, g_n(x'))$};

\draw[->] (X) to node {$g_{n+1}$} (Y);

\end{tikzpicture}
\end{center}
of $g_{n+1}$ may then be rewritten as 
\begin{center}
\begin{tikzpicture}[node distance=7.2cm, auto]

\node (X) {$G_{n+1}(u_n(y), \, u_n(y'))$};
\node (Y) [right of=X] {$1^*_{n+1}(g_nu_n(y), \, g_nu_n(y')) = 1^*_{n+1}(i_n(y), \, i_n(y'))$.};

\draw[->] (X) to node {$g_{n+1}$} (Y);

\end{tikzpicture}
\end{center}
Since $\bs{I}$ is contractible, $i$ must be full, so for each $\tau \in 1^*_{n+1}(i_n(y),i_n(y'))$ there exists an element $\psi \in I_{n+1}(y,y')$ such that $i_{n+1}(\psi)=\tau$. Then $g_{n+1}u_{n+1}(\psi) = i_{n+1}(\psi) = \tau$, so $g_{n+1}:G_{n+1}(x,x') \longrightarrow 1^*_{n+1}(g_n(x), \, g_n(x'))$ is surjective, implying that $g$ is full.
\end{proof}

\begin{remark}\label{One 0-cell}
Since $\bs{I}$ has exactly one 0-cell, the existence of a morphism $u:\bs{I} \longrightarrow \bs{G}$  of globular operads which is surjective on 0-cells implies that $\bs{G}$ must also contain exactly one 0-cell, the identity $\text{id}_0$. 
\end{remark}

Given a globular operad $\bs{G}$ for which there exists a morphism $u:\bs{I} \longrightarrow \bs{G}$ of globular operads which is surjective on 0-cells and full on the underlying globular sets, Lemma \ref{GlobOps for higher cats are contractible} and Remark \ref{One 0-cell} tell us that the algebras for $\bs{G}$ share many similarities with higher categories. The fact that $\bs{G}$ is contractible means that its algebras satisfy the required conditions on composition and coherence for $\omega$-categories; see Section \ref{contractibility section}. The fact that $\bs{G}$ contains exactly one 0-cell means that like higher categories, there are no non-trivial operations on the 0-cells of its algebras. On closer inspection, we see that that the algebras for $\bs{G}$ are \textit{precisely} $\omega$-categories:
the fact that $u$ is full means that the restriction
\vspace{1mm}
\begin{center}
\begin{tikzpicture}[node distance=4.2cm, auto]

\node (X) {$I_n(x,x')$};
\node (Y) [right of=X] {$G_n(u_{n-1}(x), \, u_{n-1}(x'))$};

\draw[->] (X) to node {$u_n$} (Y);

\end{tikzpicture}
\end{center}
of $u_n$ is a surjection for each $n>0$ and parallel pair $(x,x')$ of $(n - 1)$-cells of $\bs{I}$, so we may view the $n$-cells of $\bs{G}$ as a quotient of the $n$-cells of $\bs{I}$. We think of each $n$-cell $\chi:x \longrightarrow x'$ of $\bs{I}$ as an operation composing $n$-pasting diagrams of shape $i(\chi)$ with respect to the composition of the boundary given by $x$ and $x'$. This means that in $\bs{G}$ some of these composition operations are equal, which is to say that the algebras for $\bs{G}$ are a stricter variety of the $\omega$-categories defined by $\bs{I}$. Furthermore, it follows from Lemmas \ref{full+surj=split epi} and \ref{induced functor between alg cats} and Proposition \ref{split epi prop} that the induced functor
\vspace{1mm}
\begin{center}
\begin{tikzpicture}[node distance=4cm, auto]

\node (X) {$\bs{G} \mhyphen \mathbf{Alg}$};
\node (Y) [right of=X] {$\bs{I} \mhyphen \mathbf{Alg} = \mathbf{WkU \ \omega \mhyphen Cat}_{str}$};

\draw[->] (X) to node {$u$} (Y);

\end{tikzpicture}
\end{center}
is injective on objects, full and faithful, i.e., a full inclusion functor. So $\bs{G} \mhyphen \mathbf{Alg}$ is a full subcategory of the category $\mathbf{WkU} \ \omega \mhyphen \mathbf{Cat}_{str}$ of weak unbiased $\omega$-categories.

\begin{definition}
We say that a globular operad $\bs{G}$ is a \textit{globular operad for $\omega$-categories} if the only 0-cell of $\bs{G}$ is the identity $\text{id}_0$ and there exists a morphism $u:\bs{I} \longrightarrow \bs{G}$ of globular operads which is full on the underlying globular sets.
\end{definition}

\begin{remark} It should be noted that there are reasonable theories of $\omega$-category that do not fit the definition above. For example, $\omega$-categories with multiple ways to \textit{directly} compose a pair of 1-cells would be omitted. We have chosen to consider only those higher categories with at most one direct  composition operation for each shape of $n$-pasting diagram together with a composition of the boundary.
\end{remark}

Let $\bs{G}$ and $\bs{H}$ be globular operads for $\omega$-categories and let $f:\bs{G} \longrightarrow \bs{H}$ be a morphism of globular operads which is full on the underlying globular sets. Then by the same reasoning as above the $\omega$-categories defined by $\bs{H}$ must be a stricter variety of those defined by $\bs{G}$, and the induced functor $f:\bs{H} \mhyphen \mathbf{Alg} \longrightarrow \bs{G} \mhyphen \mathbf{Alg}$
is a full inclusion functor, making $\bs{H} \mhyphen \mathbf{Alg}$ a full subcategory of $\bs{G} \mhyphen \mathbf{Alg}$.

\begin{definition}
A \textit{morphism $f:\bs{G} \longrightarrow \bs{H}$ of globular operads for $\omega$-categories} is a morphism of globular operads which is full on the underlying gobular sets. 
\end{definition}

\begin{notation}We denote by $\mathbf{Cat \mhyphen GOp}$ the category of globular operads for $\omega$-categories. 
\end{notation}

The globular operad $\bs{I}$ for weak unbiased $\omega$-categories is weakly initial in $\mathbf{Cat \mhyphen GOp}$ by definition and the globular operad $\bs{T}$ for strict $\omega$-categories is terminal, so every operad in this category can be thought of as lying somewhere between $\bs{I}$ and $\bs{T}$. This, together with the fact that the existence of a morphism $f:\bs{G} \longrightarrow \bs{H}$ in $\mathbf{Cat \mhyphen GOp}$ implies that the $\omega$-categories defined by $\bs{H}$ are a stricter variety of those defined by $\bs{G}$, means that the category $\mathbf{Cat \mhyphen GOp}$ organises theories of algebraic $\omega$-category according to their relative weakness.

We now define $n$-globular operads for $n$-categories. First, recall that there is an extra requirement $n$-globular operads must meet in order to be contractible; see Definition \ref{contractible n-glob ops}. With this in mind, we give the following $n$-dimensional analog of Lemma \ref{contractible0Full}.

\begin{lemma} An $n$-globular operad $\bs{G_n}$ with underlying collection $g:\bs{G_n} \longrightarrow \bs{1}^*$ is contractible if and only if $g$ is full and $n$-faithful.
\end{lemma}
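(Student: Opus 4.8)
The plan is simply to unpack Definition~\ref{contractible n-glob ops} and match its two clauses against the two properties of $g$: clause (i), the existence of a contraction function on the underlying $n$-globular collection, will turn out to be equivalent to $g$ being full, exactly as in the proof of Lemma~\ref{contractible0Full} (truncated to $n$ dimensions), while clause (ii), that any parallel pair of $n$-cells with equal image under $g$ is equal, is a verbatim restatement of $n$-faithfulness.

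For the forward implication, assume $\bs{G_n}$ is contractible with chosen contraction function $c$. To see that $g$ is full it suffices to check $k$-fullness for $1 \leqslant k \leqslant n$, since $\bs{G_n}$ has no cells above dimension $n$ and so is vacuously $k$-full for $k>n$. Fix a parallel pair $(x,x')$ of $(k{-}1)$-cells; I want the restriction $G_k(x,x') \longrightarrow 1^*_k\big(g(x),g(x')\big)$ of $g_k$ to be surjective. If $g(x)\neq g(x')$ the codomain is empty, because the source and target of every cell of $\bs{1}^*$ agree, so surjectivity is automatic; if $g(x)=g(x')$, then any $\tau$ in the codomain satisfies $\partial\tau = g(x)=g(x')$, so $(x,x',\tau)$ is an admissible input for $c$ and $c(x,x',\tau)$ is a $k$-cell $x\to x'$ sent by $g$ to $\tau$. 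Thus $g$ is full. For $n$-faithfulness, a parallel pair $(\chi,\chi')$ of $n$-cells of $\bs{G_n}$ lying in a common hom-set $G_n(x,x')$ with $g(\chi)=g(\chi')$ must be equal by clause (ii), so the restriction of $g_n$ is injective on each such hom-set.

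For the converse, assume $g$ is full and $n$-faithful. Clause (ii) is immediate from $n$-faithfulness, since a parallel pair $(\chi,\chi')$ of $n$-cells with $g(\chi)=g(\chi')$ lies in the single hom-set $G_n\big(s(\chi),t(\chi)\big)$ on which $g_n$ is injective. To produce a contraction function, note that for each triple $(x,x',\tau)$ with $(x,x')$ a parallel pair of $m$-cells (necessarily $m\leqslant n-1$) and $g(x)=g(x')=\partial\tau$, fullness of $g$ in dimension $m{+}1$ makes the restriction of $g_{m+1}$ to $G_{m+1}(x,x')$ surjective onto $1^*_{m+1}\big(g(x),g(x')\big)$, so the fibre over $\tau$ is non-empty; choosing one element from each such fibre (by the axiom of choice) defines a contraction function, giving clause (i). The argument is bookkeeping rather than anything deep; the only points needing a moment's care are the handling of parallel pairs with $g(x)\neq g(x')$ — where one uses that the relevant hom-set of $\bs{1}^*$ is empty — and the routine appeal to choice to assemble the pointwise preimages into a single function.
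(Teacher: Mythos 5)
Your proof is correct. The paper states this lemma without giving a proof (as it does for its $\omega$-dimensional counterpart, Lemma~\ref{contractible0Full}), and your argument is the evident intended one: clause (ii) of Definition~\ref{contractible n-glob ops} is literally $n$-faithfulness of $g$, while clause (i) matches fullness because a triple $(x,x',\tau)$ with $g(x)=g(x')=\partial\tau$ is exactly an element of the codomain of the restricted map $G_k(x,x')\longrightarrow 1^*_k\big(g(x),g(x')\big)$, this hom-set being empty whenever $g(x)\neq g(x')$ since source and target coincide in $\bs{1}^*$. The two points that need care --- the empty-hom-set case and the appeal to choice to assemble pointwise preimages into a contraction function --- are both handled correctly.
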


Next, recall the definition of the $n$-globular operad $\bs{I_n}$ for weak unbiased $n$-categories; Definition \ref{presentation for I_n}.

\begin{definition}
We say that an $n$-globular operad $\bs{G_n}$ is an \textit{$n$-globular operad for $n$-categories} if the only 0-cell of $\bs{G_n}$ is the identity $\text{id}_0$ and there exists a morphism $u:\bs{I_n} \longrightarrow \bs{G_n}$ of $n$-globular operads which full on the underlying $n$-globular sets.
\end{definition}

\begin{lemma}\label{n-G-Ops for n-cats are contractible} Every $n$-globular operad for $n$-categories is contractible.
\end{lemma}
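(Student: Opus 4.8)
The plan is to mirror the proof of Lemma \ref{GlobOps for higher cats are contractible}, making the adjustments needed for the $n$-dimensional setting. By the preceding lemma, an $n$-globular operad $\bs{G_n}$ with underlying collection $g:\bs{G_n} \longrightarrow \bs{1}^*$ is contractible if and only if $g$ is both full and $n$-faithful. So it suffices to establish these two conditions under the hypothesis that there is a morphism $u:\bs{I_n} \longrightarrow \bs{G_n}$ of $n$-globular operads which is full on the underlying $n$-globular sets, where $\bs{I_n}$ is the $n$-globular operad for weak unbiased $n$-categories. Note that since the only $0$-cell of $\bs{I_n}$ is $\text{id}_0$ and $u$ preserves identities, $u_0$ is automatically surjective on $0$-cells, so Lemma \ref{full+surj=split epi} applies to give a section $u':\bs{G_n} \longrightarrow \bs{I_n}$ of $u$ on the underlying $n$-globular sets.

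For fullness of $g$: the argument is identical to that in Lemma \ref{GlobOps for higher cats are contractible}. Given a parallel pair $(x,x')$ of $k$-cells of $\bs{G_n}$ with $k < n$, apply the section $u'$ to obtain a parallel pair $(y,y') = (u'_k(x), u'_k(x'))$ of $k$-cells of $\bs{I_n}$ with $(x,x') = (u_k(y), u_k(y'))$; then rewrite the relevant restriction of $g_{k+1}$ using the commuting triangle $g u = i$ and the fact that $i:\bs{I_n} \longrightarrow \bs{1}^*$ is full (which holds because $\bs{I_n}$ is contractible, by the lemma just cited). For each $\tau$ in the appropriate hom-set of $\bs{1}^*$, fullness of $i$ produces a $(k{+}1)$-cell $\psi$ of $\bs{I_n}$ with $i_{k+1}(\psi) = \tau$, and then $u_{k+1}(\psi)$ is a $(k{+}1)$-cell of $\bs{G_n}$ lying over $\tau$. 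Hence $g$ is $(k{+}1)$-full for every $k$, i.e.\ $g$ is full.

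The genuinely new content — and the step I expect to be the main obstacle — is $n$-faithfulness of $g$, which has no analogue in the $\omega$-case. Here we must show that any parallel pair $(\chi, \chi')$ of $n$-cells of $\bs{G_n}$ with $g(\chi) = g(\chi')$ must be equal. The plan is: apply the section $u'$ to write $\chi = u_n(\psi)$, $\chi' = u_n(\psi')$ for $n$-cells $\psi, \psi'$ of $\bs{I_n}$ (necessarily parallel, since $u'$ and $u$ preserve sources and targets). Since $\bs{I_n}$ is contractible, it satisfies condition (ii) of Definition \ref{contractible n-glob ops}: any parallel pair of $n$-cells of $\bs{I_n}$ with the same image under $i$ are equal. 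We have $i_n(\psi) = g_n u_n(\psi) = g_n(\chi) = g_n(\chi') = g_n u_n(\psi') = i_n(\psi')$, hence $\psi = \psi'$, and therefore $\chi = u_n(\psi) = u_n(\psi') = \chi'$. The subtlety to be careful about is that $u'$ need not respect sources and targets on the nose (the construction in Lemma \ref{full+surj=split epi} builds it so that the source and target diagrams commute, so in fact this is fine), and that applying $u'$ requires $(\chi,\chi')$ to be genuinely parallel — which is part of the hypothesis — so that $(u'_{n-1}s(\chi), u'_{n-1}t(\chi))$ serves as the common boundary for both $\psi$ and $\psi'$. Once fullness and $n$-faithfulness of $g$ are in hand, the cited lemma gives contractibility of $\bs{G_n}$, completing the proof.
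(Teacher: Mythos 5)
Your proof is correct. The paper states this lemma without proof, evidently intending precisely the adaptation of the argument for Lemma \ref{GlobOps for higher cats are contractible} that you give: fullness of $g$ transfers from $i$ along $u$ and its section $u'$ exactly as in the $\omega$-dimensional case, and the additional $n$-faithfulness step follows, as you argue, from the section $u'$ (which Lemma \ref{full+surj=split epi} constructs so as to commute with sources and targets, making $u'_n(\chi)$ and $u'_n(\chi')$ genuinely parallel) together with condition (ii) of Definition \ref{contractible n-glob ops} for $\bs{I_n}$, which holds since $\bs{I_n}$ is contractible by construction.
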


\begin{definition}
A \textit{morphism $f:\bs{G_n} \longrightarrow \bs{H_n}$ of $n$-globular operads for $n$-categories} is a morphism of $n$-globular operads which is full on the underlying $n$-gobular sets. 
\end{definition}

\begin{notation}We denote by $\mathbf{Cat \mhyphen GOp}_{\bs{n}}$ the category of globular operads for $n$-categories. 
\end{notation}

To close this section, we provide some results that will be useful in Section \ref{examples section}.

\begin{lemma}\label{contractible=n-full and n-faithful} Let $\bs{G_n}$ and $\bs{H_n}$ be $n$-globular operads satisfying the second condition in the definition of contractibility for $n$-globular operads; Definition \ref{contractible n-glob ops}. Then every morphism $\bs{G_n} \longrightarrow \bs{H_n}$ of the underlying $n$-globular sets is $n$-full and $n$-faithful.
\end{lemma}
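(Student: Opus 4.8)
The plan is to read off the top-dimensional cells of $\bs{G_n}$ and $\bs{H_n}$ from their contractibility data and compare them along $f$. The one preliminary fact I will use throughout is that a morphism $f:\bs{G_n}\longrightarrow \bs{H_n}$ of $n$-globular operads commutes with the underlying collection maps, so $h\circ f=g$ where $g:\bs{G_n}\to\bs{1}^*$ and $h:\bs{H_n}\to\bs{1}^*$; in particular $f$ preserves sources and targets and $g_n=h_n\circ f_n$. Fix once and for all a parallel pair $(x,x')$ of $(n-1)$-cells of $\bs{G_n}$; everything reduces to understanding the map $f_n:G_n(x,x')\longrightarrow H_n\big(f_{n-1}(x),f_{n-1}(x')\big)$ on the corresponding sets of $n$-cells.

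First I would dispatch $n$-faithfulness. Take $\chi,\chi'\in G_n(x,x')$ with $f_n(\chi)=f_n(\chi')$. Then $g_n(\chi)=h_nf_n(\chi)=h_nf_n(\chi')=g_n(\chi')$, and $\chi,\chi'$ are a parallel pair of $n$-cells of $\bs{G_n}$ with the same image under $g$; by the second clause of Definition~\ref{contractible n-glob ops} they are equal. Hence $f_n$ is injective on this hom-set (trivially so if $G_n(x,x')$ is empty), i.e. $f$ is $n$-faithful. This direction uses nothing about $\bs{G_n}$ beyond that second clause together with $h\circ f=g$.

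For $n$-fullness I would take $\psi\in H_n\big(f_{n-1}(x),f_{n-1}(x')\big)$ and set $\tau:=h_n(\psi)$. Compatibility of $f$ with sources, targets and the collection maps gives $\partial\tau=h_{n-1}(f_{n-1}(x))=g_{n-1}(x)$ and likewise $\partial\tau=g_{n-1}(x')$, so $g_{n-1}(x)=g_{n-1}(x')=\partial\tau$ and $(x,x',\tau)$ is an admissible triple. By Lemma~\ref{contractible n-cells} there is a (unique) $n$-cell $\chi:x\longrightarrow x'$ of $\bs{G_n}$ with $g_n(\chi)=\tau$. Now $f_n(\chi)$ and $\psi$ are a parallel pair of $n$-cells of $\bs{H_n}$ (both run from $f_{n-1}(x)$ to $f_{n-1}(x')$) with $h_n(f_n(\chi))=g_n(\chi)=\tau=h_n(\psi)$, so the second clause of Definition~\ref{contractible n-glob ops} applied to $\bs{H_n}$ forces $f_n(\chi)=\psi$. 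Thus $\psi$ lies in the image of $f_n$, and $f$ is $n$-full.

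The point that needs care — the main obstacle — is the asymmetric use of the two contractibility clauses: $n$-faithfulness needs only the uniqueness clause on the source operad, whereas $n$-fullness genuinely needs the \emph{existence} of $n$-cells over admissible pasting diagrams, which is exactly what Lemma~\ref{contractible n-cells} supplies (contractibility of $\bs{G_n}$), with the uniqueness clause on $\bs{H_n}$ then pinning the produced cell down to $\psi$. I would also flag the degenerate situation: if $g_{n-1}(x)\neq g_{n-1}(x')$ then any $n$-cell between $x$ and $x'$, or between $f_{n-1}(x)$ and $f_{n-1}(x')$, would force those $g$-images to agree, so both hom-sets are empty and both conditions hold vacuously — which is why the argument above needs no explicit case split.
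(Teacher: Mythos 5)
The paper states this lemma without proof, so there is nothing to compare your argument against line by line; judged on its own terms, your argument is the natural one, and the $n$-faithfulness half is complete and correct. Note, however, that both halves of your proof rest on the identity $h\circ f=g$, which holds for morphisms of $n$-globular \emph{collections} (in particular operads) but not for arbitrary morphisms of the underlying $n$-globular sets, which is what the lemma literally quantifies over; without that compatibility the statement is false (one can redistribute $n$-cells among parallel classes freely), so you should make explicit that you are reading ``morphism'' as ``morphism of collections/operads, viewed on underlying globular sets''. This is how the lemma is applied in the paper (e.g.\ to the operad morphism $\bs{I_2}\longrightarrow\bs{W_2}$), so it is the right reading, but it is an assumption you are importing.

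The more substantive issue is in the $n$-fullness half. There you invoke Lemma \ref{contractible n-cells} to \emph{produce} an $n$-cell $\chi:x\longrightarrow x'$ of $\bs{G_n}$ over the admissible triple $(x,x',\tau)$, and that lemma requires $\bs{G_n}$ to be contractible, i.e.\ to satisfy condition (i) of Definition \ref{contractible n-glob ops} (existence of a contraction function) as well as condition (ii). The hypothesis of the present lemma grants only condition (ii), under which $G_n(x,x')$ may be empty while $H_n\big(f_{n-1}(x),f_{n-1}(x')\big)$ is not: for instance the inclusion $Tr_{n-1}(\bs{T_n})\longrightarrow\bs{T_n}$ is a morphism of $n$-globular operads, both of which satisfy condition (ii), yet it is not $n$-full. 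So as a proof of the lemma as literally stated, the fullness half has a gap --- indeed that half of the statement is false under the literal hypotheses --- and what you have actually proved is the corrected statement in which $\bs{G_n}$ is assumed contractible and $\bs{H_n}$ satisfies condition (ii). You do flag this asymmetry yourself, which is the right instinct; since every application in the paper takes $\bs{G_n}$ to be one of the contractible operads $\bs{I_n}$ or $\bs{W_n}$, your version suffices for the paper's purposes, but the strengthened hypothesis on $\bs{G_n}$ should be stated rather than smuggled in through Lemma \ref{contractible n-cells}.
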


%\begin{corollary} By Lemmas \ref{contractible=n-full and n-faithful} and \ref{n-G-Ops for n-cats are contractible} every morphism $f:\bs{G_n} \longrightarrow \bs{H_n}$ of $n$-globular operads for $n$-categories is $n$-faithful on the underlying $n$-globular sets.
%\end{corollary}

%%In analogy with the infinite-dimensional case, the $n$-globular operad $\bs{I_n}$ for weak unbiased $n$-categories is weakly initial in $\mathbf{Cat \mhyphen GOp}_{\bs{n}}$ and the $n$-globular operad $\bs{T_n}$ for strict $n$-categories is terminal, so every $n$-globular operad in $\mathbf{Cat \mhyphen GOp}_{\bs{n}}$ can be thought of as lying between $\bs{I_n}$ and $\bs{T_n}$. Furthermore, the existence of a morphism $f:\bs{G_n} \longrightarrow \bs{H_n}$ in $\mathbf{Cat \mhyphen GOp}_{\bs{n}}$ implies that the $n$-categories defined by $\bs{H_n}$ are a stricter variety of those defined by $\bs{G_n}$. The $n$-cells of $\bs{H_n}$ are a quotient of the $n$-cells of $\bs{G_n}$ and the induced functor 

%\begin{center}
%\begin{tikzpicture}[node distance=2cm, auto]

%\node (X) {$\bs{H_n} \mhyphen \mathbf{Alg}$};
%\node (Y) [node distance=3cm, right of=X] {$\bs{G_n} \mhyphen \mathbf{Alg}$};

%\draw[->] (X) to node {$f$} (Y);

%\end{tikzpicture}
%\end{center}

%\noindent is a full inclusion functor, meaning that $\bs{H_n} \mhyphen \mathbf{Alg}$ is a full subcategory of $\bs{G_n} \mhyphen \mathbf{Alg}$. The category $\mathbf{Cat \mhyphen GOp}_{\bs{n}}$ therefore organises theories of algebraic $n$-category according to their relative weakness.

\begin{definition}\label{induced nat trans} Given a morphism $f:\bs{G_n} \longrightarrow \bs{H_n}$ of $n$-globular operads, $(\mhyphen)_f$ is the natural transformation 
\begin{center}
\begin{tikzpicture}[node distance=3.6cm, auto]

\node (X) {$\mathbf{GSet}_{\bs{n}}$};
\node (Y) [right of=X] {$\bs{G_n} \mhyphen \mathbf{Alg}$};
\node (A) [node distance=1.8cm, right of=X] {$\Downarrow (\mhyphen)_f$};
\node (Z) [node distance=0.9cm, below of=A] {$\bs{H_n} \mhyphen \mathbf{Alg}$};

\draw[->, bend left=40] (X) to node {} (Y);
\draw[->, bend right=15] (X) to node [swap] {} (Z);
\draw[->, bend right=15] (Z) to node [swap] {$f$} (Y);

\end{tikzpicture}
\end{center}
whose component at an $n$-globular set $\bs{A}$ is $\bs{A}_f$, defined by truncating Definition \ref{def of A_f} to $n$-dimensions by replacing globular operads with $n$-globular operads.
\end{definition} 

\begin{remark}
The natural transformation in Definition \ref{induced nat trans} is the mate of the identity natural transformation 

\begin{center}
\begin{tikzpicture}[node distance=3.6cm, auto]

\node (X) {$\bs{H_n} \mhyphen \mathbf{Alg}$};
\node (Y) [right of=X] {$\mathbf{GSet}_{\bs{n}}$.};
\node (A) [node distance=1.8cm, right of=X] {$\Downarrow 1$};
\node (Z) [node distance=0.9cm, below of=A] {$\bs{G_n} \mhyphen \mathbf{Alg}$};

\draw[->, bend left=40] (X) to node {} (Y);
\draw[->, bend right=15] (X) to node [swap] {$f$} (Z);
\draw[->, bend right=15] (Z) to node [swap] {} (Y);

\end{tikzpicture}
\end{center}
\end{remark}

%If $\bs{G_n}$ is an $n$-globular operad for $n$-categories then $\bs{A_{G_n}}$ is the corresponding variety of free $n$-category on $\bs{A}$. If $f:\bs{G_n} \longrightarrow \bs{H_n}$ is a morphism in $\mathbf{Cat \mhyphen GOp}_{\bs{n}}$, then the $n$-categories defined by $\bs{H_n}$ are a stricter variety of those defined by $\bs{G_n}$, and $\bs{A}_f: \bs{A}_{\bs{G_n}} \longrightarrow \bs{A}_{\bs{H_n}}$ is the canonical inclusion $n$-functor.

%\begin{example} 
%Take the unique morphism $u:\bs{I_n} \longrightarrow \bs{T_n}$ from the $n$-globular operad for weak unbiased $n$-categories to the terminal $n$-globular operad for strict $n$-categories. For any $n$-globular set $\bs{A}$, $\bs{A_{I_n}}$ is the free weak unbiased $n$-category on $\bs{A}$ and $\bs{A_T} \cong \bs{A}^*$ is the free strict $n$-category on $\bs{A}$. The induced $n$-functor $\bs{A}_u$ is the canonical strict $n$-functor $\bs{A}_{\bs{I_n}} \longrightarrow \bs{A}^*$.
%\end{example}

\section{Examples of $n$-globular operads for $n$-categories}\label{examples section}

As mentioned in the introduction, a preprint of Michael Batanin \cite{batanin} conjectures that it should possible to take `slices' of globular operads. The $k^{th}$ slice was described as the symmetric operad determined by the $k$-dimensional data of a globular operad. Thus, given a globular operad for higher categories, the slices would isolate the algebraic structure of the associated higher categories in each dimension. As a first application of presentations, we will show in the follow up paper \cite{RG} that given a presentation $P$ for a globular operad $\bs{G}$, there exists a symmetric operad determined by the $k$-dimensional data of $P$; this symmetric operad is the $k^{th}$ slice of $\bs{G}$.

Batanin also hypothesised that slices could tell us when one theory of higher category is equivalent to another, and in particular, when a semi-strict notion of higher category is equivalent to the fully weak variety. In this section we construct presentations for the $n$-globular operads for two semi-strict theories of $n$-category in dimensions $n \leqslant 4$; $n$-categories with weak units in low dimensions, and $n$-categories with weak interchange laws. Using the language of presentations and slices, we show in \cite{RG} that both of our notions of semi-strict 4-category are equivalent to fully weak 4-categories.

Since the $n$-globular operads appearing in this section are all operads for some theory of $n$-category, they must all be contractible. We therefore give the corresponding presentations in the style of Example \ref{example of a contractible presentation}. In particular, when constructing a presentation for some $n$-globular operad $\bs{G_n}$ we will declare $\bs{G_n}$ to be contractible, so that the $n$-cells of $\bs{G_n}$ are determined by the lower dimensional cells and there is no need to specify any $n$-cell generators or relations; see Lemma \ref{contractible n-cells}. Following this, we will need to show that $\bs{G_n}$ is actually contractible, i.e., that the $\bs{G_n}$ also satisfies the definiton of contractibilty in dimensions $<n$; see Definition \ref{contractible n-glob ops}.

\subsection{Weak $n$-categories}\label{weaksection}

In this section we construct presentations for the 2, 3, and 4-globular operads for bicategories, tricategories and (biased) weak 4-categories, respectively. 

\begin{notation} We will denote by $w:\bs{W_n} \longrightarrow \bs{1}^*$ the underlying collection map of the $n$-globular operad $\bs{W_n}$ for weak $n$-categories
\end{notation}

\begin{definition}\label{W_2}
The 2-globular operad $\bs{W_2}$ for bicategories is the \textit{contractible} 2-globular operad with

\begin{list}{•}{}

\item a single 0-cell, the identity $\text{id}_0$; and

\item 1-cells consisting of the operadic composites of 1-cells $i_1$ and $h_1$ whose images under the underlying collection map are as follows.

\begin{center}
\begin{tikzpicture}[node distance=2cm, auto]

\node (A) {$\cdot$};
\node (B) [right of =A] {$\cdot$};

\draw[->] (A) to node {$i_1$} (B);

\node (P) [node distance=3cm, right of =B] {$\cdot$};

\node (b) [node distance=0.5cm, right of=B] {};
\node (p) [node distance=0.5cm, left of=P] {};
\draw[|->, dashed] (b) to node {} (p);

\node (A') [node distance=1cm, below of=A] {$\cdot$};
\node (B') [right of =A'] {$\cdot$};

\draw[->] (A') to node {$h_1$} (B');

\node (P') [node distance=3cm, right of =B'] {$\cdot$};
\node (Q') [right of =P'] {$\cdot$};
\node (R') [right of =Q'] {$\cdot$};

\draw[->] (P') to node {} (Q');
\draw[->] (Q') to node {} (R');

\node (b') [node distance=0.5cm, right of=B'] {};
\node (p') [node distance=0.5cm, left of=P'] {};
\draw[|->, dashed] (b') to node {} (p');

\end{tikzpicture}
\end{center}

\end{list}
\end{definition}

Just as an algebra for the 2-globular operad $\bs{I_2}$ is an unbiased bicategory (Example \ref{I_2}), an algebra $\theta:\bs{A_{W_2}} \longrightarrow \bs{A}$
for $\bs{W_2}$ is precisely a (biased) bicategory with underlying 2-globular set $\bs{A}$. 
%The 1-cell generators $i_1$ and $h_1$ of $\bs{W_2}$ provide 1-cell identites and horizontal composition of 1-cells in $\bs{A}$, respectively. 
A morphism of algebras for $\bs{W_2}$ is strict 2-functor between bicategories.

\begin{notation}
We denote by $\mathbf{Bicat}_{str}$ the category $\bs{W_2} \mhyphen \mathbf{Alg}$ of algebras for $\bs{W_2}$. 
\end{notation}

The 2-globular operad $\bs{W_2}$ satisfies the definition of contractibility in dimension 2 by construction. However, in order for Definition \ref{W_2} to make sense we need to show that that $\bs{W_2}$ is actually contractible. By Lemma \ref{n-G-Ops for n-cats are contractible} it suffices to show that $\bs{W_2}$ is an object in the category $\mathbf{Cat \mhyphen Op_2}$ of 2-globular operads for 2-categories, meaning that there exists a morphism $f:\bs{I_2} \longrightarrow \bs{W_2}$ of 2-globular operads which is full on the underlying 2-globular sets. Furthermore, since our presentation for $\bs{I_2}$ consisted of a 1-cell generator $c^n$ for each $n \in \mathbb{N}$ (see Example \ref{I_2}), Lemmas \ref{contractible n-morphisms} and \ref{map is determined at k-cell generators} tell us that a morphism $f:\bs{I_2} \longrightarrow \bs{W_2}$ of 2-globular operads is completely determined by its value on the the $c^n$s. This gives several choices for $f$, for example we could choose,

\begin{align*}
f_1(c^0) &= i_1\\
f_1(c^1) &= \text{id}_1\\
f_1(c^2) &= h_1\\
f_1(c^3) &= h_1 \circ (h_1, \text{id}_1)\\
\vdots \\
f_1(c^n) &= h_1 \circ \big( ... \ h_1 \circ \big(h_1 \circ (h_1,\text{id}_1),\text{id}_1\big)...,\text{id}_1\big).
\end{align*}
For this choice of $f$ both 1-cell generators $i_1$ and $h_1$ of $\bs{W_2}$ are in the image of $f_1:I_1 = I_1(\text{id}_0, \text{id}_0) \longrightarrow W_1(\text{id}_0, \text{id}_0) = W_1$.\label{contructing a full map from I_2 to W_2} 
Since the remaining 1-cells of $\bs{W_2}$ are operadic composites of $i_1$ and $h_1$, and morphisms of $n$-globular operads preserve operadic composition, $f_1$ must be surjective, so $f$ is $1$-full. It now follows from Lemma \ref{contractible=n-full and n-faithful} that $f$ is full, so $\bs{W_2}$ is indeed contractible. The induced functor $f:\bs{W_2} \mhyphen \mathbf{Alg} \longrightarrow \bs{I_2} \mhyphen \mathbf{Alg}$ is just the inclusion functor $\mathbf{Bicat}_{str} \longrightarrow \mathbf{UBicat}_{str}$ corresponding to our choice of $f$. Explicitly, it is the functor sending a bicategory $\mathcal{B}$ to the unbiased bicategory whose underlying bicategory is $\mathcal{B}$ and for which the $n$-ary composite of $n$ composable 1-cells $a_1,...,a_n$ is given by the composite $\big(\big(...\big((a_1a_2)a_3\big)...\big)a_{n-1}\big)a_n$. 

\begin{definition}\label{W_3} 
The 3-globular operad $\bs{W_3}$ for tricategories is the  \textit{contractible} 3-globular operad with

\begin{list}{•}{}

\item the same 0 and 1-cells as $\bs{W_2}$; and

\item 2-cells consisting of the operadic composites of 2-cells $i_2$, $h_2$, $v_2$, $l_2$, $l'_2$, $r_2$, $r'_2$, $a_2$ and $a'_2$ whose images under the underlying collection map are as follows.

\begin{center}
\begin{tikzpicture}[node distance=2.5cm, auto]

\node (A) {$\cdot$};
\node (B) [node distance=2.5cm, right of =A] {$\cdot$};

\draw[->, bend left=40] (A) to node {$\text{id}_1$} (B);
\draw[->, bend right=40] (A) to node [swap] {$\text{id}_1$} (B);

\node () [node distance=1.25cm, right of =A] {$\Downarrow i_2$};

\node (P) [node distance=3cm, right of =B] {$\cdot$};
\node (Q) [node distance=2cm, right of =P] {$\cdot$};

\draw[->] (P) to node {} (Q);

\node (b) [node distance=0.5cm, right of=B] {};
\node (p) [node distance=0.5cm, left of=P] {};
\draw[|->, dashed] (b) to node {} (p);

\node (A') [node distance=2.3cm, below of=A] {$\cdot$};
\node (B') [node distance=2.5cm, right of =A'] {$\cdot$};

\draw[->, bend left=40] (A') to node {$h_1$} (B');
\draw[->, bend right=40] (A') to node [swap] {$h_1$} (B');

\node () [node distance=1.25cm, right of =A'] {$\Downarrow h_2$};

\node (P') [node distance=3cm, right of =B'] {$\cdot$};
\node (Q') [node distance=2.2cm, right of =P'] {$\cdot$};
\node (R') [node distance=2.2cm, right of =Q'] {$\cdot$};

\draw[->, bend left=40] (P') to node {} (Q');
\draw[->, bend right=40] (P') to node {} (Q');
\draw[->, bend left=40] (Q') to node {} (R');
\draw[->, bend right=40] (Q') to node {} (R');

\node () [node distance=1.1cm, right of =P'] {$\Downarrow$};
\node () [node distance=1.1cm, right of =Q'] {$\Downarrow$};

\node (b') [node distance=0.5cm, right of=B'] {};
\node (p') [node distance=0.5cm, left of=P'] {};
\draw[|->, dashed] (b') to node {} (p');

\end{tikzpicture}
\end{center}

%[node distance=2.3cm, below of=A']

\begin{center}
\begin{tikzpicture}[node distance=2.5cm, auto]

\node (A'') {$\cdot$};
\node (B'') [node distance=2.5cm, right of =A''] {$\cdot$};

\draw[->, bend left=40] (A'') to node {$\text{id}_1$} (B'');
\draw[->, bend right=40] (A'') to node [swap] {$\text{id}_1$} (B'');

\node () [node distance=1.25cm, right of =A''] {$\Downarrow v_2$};

\node (P'') [node distance=3cm, right of =B''] {$\cdot$};
\node (Q'') [node distance=2.2cm, right of =P''] {$\cdot$};

\draw[->, bend left=60] (P'') to node {} (Q'');
\draw[->] (P'') to node {} (Q'');
\draw[->, bend right=60] (P'') to node {} (Q'');

\node (x) [node distance=1.1cm, right of =P''] {};
\node () [node distance=0.3cm, above of =x] {$\Downarrow$};
\node () [node distance=0.35cm, below of =x] {$\Downarrow$};

\node (b'') [node distance=0.5cm, right of=B''] {};
\node (p'') [node distance=0.5cm, left of=P''] {};
\draw[|->, dashed] (b'') to node {} (p'');

\node (X) [node distance=2.3cm, below of=A''] {$\cdot$};
\node (Y) [node distance=2.5cm, right of =X] {$\cdot$};

\draw[->, bend left=40] (X) to node {$h_1 \circ (\text{id}_1, i_1)$} (Y);
\draw[->, bend right=40] (X) to node [swap] {$\text{id}_1$} (Y);

\node () [node distance=1.25cm, right of =X] {$\Downarrow l_2$};

\node (U) [node distance=3cm, right of =Y] {$\cdot$};
\node (V) [node distance=2cm, right of =U] {$\cdot$};

\draw[->] (U) to node {} (V);

\node (y) [node distance=0.5cm, right of=Y] {};
\node (u) [node distance=0.5cm, left of=U] {};
\draw[|->, dashed] (y) to node {} (u);

\node (X') [node distance=2.3cm, below of=X] {$\cdot$};
\node (Y') [node distance=2.5cm, right of =X'] {$\cdot$};

\draw[->, bend left=40] (X') to node {$\text{id}_1$} (Y');
\draw[->, bend right=40] (X') to node [swap] {$h_1 \circ (\text{id}_1, i_1)$} (Y');

\node () [node distance=1.25cm, right of =X'] {$\Downarrow l'_2$};

\node (U') [node distance=3cm, right of =Y'] {$\cdot$};
\node (V') [node distance=2cm, right of =U'] {$\cdot$};

\draw[->] (U') to node {} (V');

\node (y') [node distance=0.5cm, right of=Y'] {};
\node (u') [node distance=0.5cm, left of=U'] {};
\draw[|->, dashed] (y') to node {} (u');

\node (X'') [node distance=2.3cm, below of=X'] {$\cdot$};
\node (Y'') [node distance=2.5cm, right of =X''] {$\cdot$};

\draw[->, bend left=40] (X'') to node {$h_1 \circ (i_1, \text{id}_1)$} (Y'');
\draw[->, bend right=40] (X'') to node [swap] {$\text{id}_1$} (Y'');

\node () [node distance=1.25cm, right of =X''] {$\Downarrow r_2$};

\node (U'') [node distance=3cm, right of =Y''] {$\cdot$};
\node (V'') [node distance=2cm, right of =U''] {$\cdot$};

\draw[->] (U'') to node {} (V'');

\node (y'') [node distance=0.5cm, right of=Y''] {};
\node (u'') [node distance=0.5cm, left of=U''] {};
\draw[|->, dashed] (y'') to node {} (u'');

\node (X''') [node distance=2.3cm, below of=X''] {$\cdot$};
\node (Y''') [node distance=2.5cm, right of =X'''] {$\cdot$};

\draw[->, bend left=40] (X''') to node {$\text{id}_1$} (Y''');
\draw[->, bend right=40] (X''') to node [swap] {$h_1 \circ (i_1, \text{id}_1)$} (Y''');

\node () [node distance=1.25cm, right of =X'''] {$\Downarrow r'_2$};

\node (U''') [node distance=3cm, right of =Y'''] {$\cdot$};
\node (V''') [node distance=2cm, right of =U'''] {$\cdot$};

\draw[->] (U''') to node {} (V''');

\node (y''') [node distance=0.5cm, right of=Y'''] {};
\node (u''') [node distance=0.5cm, left of=U'''] {};
\draw[|->, dashed] (y''') to node {} (u''');

\node (N) [node distance=2.4cm, below of=X'''] {$\cdot$};
\node (M) [node distance=2.5cm, right of =N] {$\cdot$};

\draw[->, bend left=40] (N) to node {$h_1 \circ (\text{id}_1, h_1)$} (M);
\draw[->, bend right=40] (N) to node [swap] {$h_1 \circ (h_1, \text{id}_1)$} (M);

\node () [node distance=1.25cm, right of =N] {$\Downarrow a_2$};

\node (H) [node distance=3cm, right of =M] {$\cdot$};
\node (I) [node distance=2cm, right of =H] {$\cdot$};
\node (J) [node distance=2cm, right of =I] {$\cdot$};
\node (K) [node distance=2cm, right of =J] {$\cdot$};

\draw[->] (H) to node {} (I);
\draw[->] (I) to node {} (J);
\draw[->] (J) to node {} (K);

\node (m) [node distance=0.5cm, right of=M] {};
\node (h) [node distance=0.5cm, left of=H] {};
\draw[|->, dashed] (m) to node {} (h);

\node (N') [node distance=2.4cm, below of=N] {$\cdot$};
\node (M') [node distance=2.5cm, right of =N'] {$\cdot$};

\draw[->, bend left=40] (N') to node {$h_1 \circ (h_1, \text{id}_1)$} (M');
\draw[->, bend right=40] (N') to node [swap] {$h_1 \circ (\text{id}_1, h_1)$} (M');

\node () [node distance=1.25cm, right of =N'] {$\Downarrow a'_2$};

\node (H') [node distance=3cm, right of =M'] {$\cdot$};
\node (I') [node distance=2cm, right of =H'] {$\cdot$};
\node (J') [node distance=2cm, right of =I'] {$\cdot$};
\node (K') [node distance=2cm, right of =J'] {$\cdot$};

\draw[->] (H') to node {} (I');
\draw[->] (I') to node {} (J');
\draw[->] (J') to node {} (K');

\node (m') [node distance=0.5cm, right of=M'] {};
\node (h') [node distance=0.5cm, left of=H'] {};
\draw[|->, dashed] (m') to node {} (h');

\end{tikzpicture}
\end{center}
\end{list}
\end{definition}

An algebra $\theta:\bs{A_{W_3}} \longrightarrow \bs{A}$ for $\bs{W_3}$ is precisely a (biased) tricategory in the sense of \cite{NG} with underlying 3-globular set $\bs{A}$. In dimensions $\leqslant 1$ an algebra for $\bs{W_3}$ is the same as an algebra for $\bs{W_2}$, since the 0 and 1-cells of these operads are the same. The 2-cell generators $i_2$, $h_2$ and $v_2$ of $\bs{W_3}$ provide 2-cell identites, binary horizontal composition of 2-cells and binary vertical composition of 2-cells in $\bs{A}$, respectively. The remaining 2-cell generators for $\bs{W_3}$, $l_2$, $r_2$, $a_2$,  $l'_2$, $r'_2$ and $a'_2$, provide the left unit coherence 2-cells, right unit coherence 2-cells, associativity coherence 2-cells, and their (weak) inverses in $\bs{A}$, respectively. 

\begin{notation}
We denote by $\mathbf{Tricat}_{str}$ the category $\bs{W_3} \mhyphen \mathbf{Alg}$ of algebras for $\bs{W_2}$. 
\end{notation}

We may define a morphism $f:\bs{I_3} \longrightarrow \bs{W_3}$ of 3-globular operads which is full on the underlying 3-globular sets similarly to how we defined the morphism $\bs{I_2} \longrightarrow \bs{W_2}$ of 2-globular operads on page \pageref{contructing a full map from I_2 to W_2}. It follows that $\bs{W_3}$ is an object in the category $\mathbf{Cat \mhyphen Op_3}$ of 3-globular operads for 3-categories, and is therefore indeed contractible. The induced functor $f:\bs{W_3} \mhyphen \mathbf{Alg} \longrightarrow \bs{I_3} \mhyphen \mathbf{Alg}$ is the inclusion functor $\mathbf{Tricat}_{str} \longrightarrow \mathbf{UTricat}_{str}$ corresponding to the choice of $f$.

In dimensions $\leqslant 1$ bicategories and tricategories are identical, so the $n$-globular operads $\bs{W_2}$ and $\bs{W_3}$ for bicategories and tricategories, respectively, have the same 0 and 1-cells. More generally, weak $n$-categories and weak $(n{+}1)$-categories should be identical in dimensions $\leqslant n-1$, so the operads $\bs{W_n}$ and $\bs{W_{n+1}}$ for weak $n$-categories and weak $(n{+}1)$-categories, respectively, should have the same $k$-cells for all $k \leqslant n-1$. Additionally, weak $n$-categories and weak $(n{+}1)$-categories share the same types of binary composition operations on $n$-cells and the same coherence $n$-cells, but only weak $n$-categories possess axioms for $n$-cell composition. For example, ordinary categories and bicategories both have a single binary composition operation on 1-cells and the same coherence 1-cells, namely the identity 1-cells. However, 1-cell composition only satisfies axioms in ordinary categories. In bicategories, the corresponding axioms are pushed to dimension 2, where they are replaced by the invertibility, naturality and compatibility axioms satisfied by the coherence 2-cells. Analogously, bicategories and tricategories both have binary horizontal and vertical composition operations on 2-cells, and the same kinds of coherence 2-cells. However, only in bicategories does 2-cell composition satisfy any axioms. In tricategories, the axioms are pushed to dimension 3. With this in mind, we give the following presentation for the 4-globular operad $\bs{W_4}$ for weak 4-categories.

\begin{definition}\label{W_4}
The 4-globular operad $\bs{W_4}$ for weak 4-categories is the \textit{contractible} 4-globular operad with

\begin{list}{•}{}

\item the same 0, 1 and 2-cells as $\bs{W_3}$; and

\item 3-cells consisting of the operadic composites of forty-six 3-cells, including 3-cells $i_3$, $h_3$, $v_3$ and $c_3$ whose images under the underlying collection map are as follows.

\begin{center}
\resizebox{0.67\textwidth}{!}{
\begin{tikzpicture}[node distance=2cm, auto]

\node (A) {$\cdot$};
\node (B) [node distance=3.6cm, right of=A] {$\cdot$};
\node (a) [node distance=1.8cm, right of=A] {};
\node () [node distance=0.15cm, above of=a] {$i_3$};
\node () [node distance=0.15cm, below of=a] {$\Rrightarrow$};

\node (c) [node distance=1.5cm, right of=A] {};
\node (e) [node distance=0.7cm, above of=c] {};
\node (f) [node distance=0.7cm, below of=c] {};
\node (d) [node distance=2.1cm, right of=A] {};
\node (g) [node distance=0.7cm, above of=d] {};
\node (h) [node distance=0.7cm, below of=d] {};

\draw[->, bend left=45] (A) to node {$\text{id}_1$} (B);
\draw[->, bend right=45] (A) to node [swap] {$\text{id}_1$} (B);
\draw[->, bend right=25] (e) to node [swap] {$\text{id}_2$} (f);
\draw[->, bend left=25] (g) to node {$\text{id}_2$} (h);

\node (P) [node distance=3cm, right of =B] {$\cdot$};
\node (Q) [node distance=2.8cm, right of =P] {$\cdot$};
\node () [node distance=1.4cm, right of=P] {$\Downarrow$};

\draw[->, bend left=40] (P) to node {} (Q);
\draw[->, bend right=40] (P) to node {} (Q);

\node (x) [node distance=0.5cm, right of=B] {};
\node (y) [node distance=0.5cm, left of=P] {};
\draw[|->, dashed] (x) to node {} (y);

\node (A') [node distance=2.9cm, below of=A] {$\cdot$};
\node (B') [node distance=3.6cm, right of=A'] {$\cdot$};
\node (a') [node distance=1.8cm, right of=A'] {};
\node () [node distance=0.15cm, above of=a'] {$h_3$};
\node () [node distance=0.15cm, below of=a'] {$\Rrightarrow$};

\node (c') [node distance=1.5cm, right of=A'] {};
\node (e') [node distance=0.7cm, above of=c'] {};
\node (f') [node distance=0.7cm, below of=c'] {};
\node (d') [node distance=2.1cm, right of=A'] {};
\node (g') [node distance=0.7cm, above of=d'] {};
\node (h') [node distance=0.7cm, below of=d'] {};

\draw[->, bend left=45] (A') to node {$h_1$} (B');
\draw[->, bend right=45] (A') to node [swap] {$h_1$} (B');
\draw[->, bend right=25] (e') to node [swap] {$h_2$} (f');
\draw[->, bend left=25] (g') to node {$h_2$} (h');

\node (P') [node distance=3cm, right of=B'] {$\cdot$};
\node (Q') [node distance=3cm, right of=P'] {$\cdot$};
\node (R') [node distance=3cm, right of=Q'] {$\cdot$};
\node () [node distance=1.5cm, right of=P'] {$\Rrightarrow$};
\node () [node distance=1.5cm, right of=Q'] {$\Rrightarrow$};

\node (u') [node distance=1.2cm, right of=P'] {};
\node (i') [node distance=0.6cm, above of=u'] {};
\node (j') [node distance=0.6cm, below of=u'] {};
\node (v') [node distance=1.8cm, right of=P'] {};
\node (n') [node distance=0.6cm, above of=v'] {};
\node (m') [node distance=0.6cm, below of=v'] {};

\node (w') [node distance=1.2cm, right of=Q'] {};
\node (k') [node distance=0.6cm, above of=w'] {};
\node (l') [node distance=0.6cm, below of=w'] {};
\node (z') [node distance=1.8cm, right of=Q'] {};
\node (s') [node distance=0.6cm, above of=z'] {};
\node (t') [node distance=0.6cm, below of=z'] {};

\draw[->, bend left=45] (P') to node {} (Q');
\draw[->, bend right=45] (P') to node [swap] {} (Q');
\draw[->, bend right=30] (i') to node [swap] {} (j');
\draw[->, bend left=30] (n') to node {} (m');

\draw[->, bend left=45] (Q') to node {} (R');
\draw[->, bend right=45] (Q') to node [swap] {} (R');
\draw[->, bend right=30] (k') to node [swap] {} (l');
\draw[->, bend left=30] (s') to node {} (t');

\node (x') [node distance=0.5cm, right of=B'] {};
\node (y') [node distance=0.5cm, left of=P'] {};
\draw[|->, dashed] (x') to node {} (y');

\node (A'') [node distance=2.9cm, below of=A'] {$\cdot$};
\node (B'') [node distance=3.6cm, right of=A''] {$\cdot$};
\node (a'') [node distance=1.8cm, right of=A''] {};
\node () [node distance=0.15cm, above of=a''] {$v_3$};
\node () [node distance=0.15cm, below of=a''] {$\Rrightarrow$};

\node (c'') [node distance=1.5cm, right of=A''] {};
\node (e'') [node distance=0.7cm, above of=c''] {};
\node (f'') [node distance=0.7cm, below of=c''] {};
\node (d'') [node distance=2.1cm, right of=A''] {};
\node (g'') [node distance=0.7cm, above of=d''] {};
\node (h'') [node distance=0.7cm, below of=d''] {};

\draw[->, bend left=45] (A'') to node {$\text{id}_1$} (B'');
\draw[->, bend right=45] (A'') to node [swap] {$\text{id}_1$} (B'');
\draw[->, bend right=25] (e'') to node [swap] {$v_2$} (f'');
\draw[->, bend left=25] (g'') to node {$v_2$} (h'');

\node (P'') [node distance=3cm, right of=B''] {$\cdot$};
\node (Q'') [node distance=3cm, right of=P''] {$\cdot$};

\draw[->, bend left=65] (P'') to node {} (Q'');
\draw[->] (P'') to node {} (Q'');
\draw[->, bend right=65] (P'') to node [swap] {} (Q'');

\node (u'') [node distance=1.5cm, right of=P''] {};
\node (i'') [node distance=0.45cm, above of=u''] {$\Rrightarrow$};
\node (j'') [node distance=0.45cm, below of=u''] {$\Rrightarrow$};

\node (n'') [node distance=0.4cm, left of=i'', above of=i''] {};
\node (m'') [node distance=0.4cm, left of=i'', below of=i''] {};
\node (k'') [node distance=0.4cm, right of=i'', above of=i''] {};
\node (l'') [node distance=0.4cm, right of=i'', below of=i''] {};
\node (w'') [node distance=0.4cm, left of=j'', above of=j''] {};
\node (z'') [node distance=0.4cm, left of=j'', below of=j''] {};
\node (s'') [node distance=0.4cm, right of=j'', above of=j''] {};
\node (t'') [node distance=0.4cm, right of=j'', below of=j''] {};

\draw[->, bend right=30] (n'') to node [swap] {} (m'');
\draw[->, bend left=30] (k'') to node {} (l'');
\draw[->, bend right=30] (w'') to node [swap] {} (z'');
\draw[->, bend left=30] (s'') to node {} (t'');

\node (x'') [node distance=0.5cm, right of=B''] {};
\node (y'') [node distance=0.5cm, left of=P''] {};
\draw[|->, dashed] (x'') to node {} (y'');

\node (A''') [node distance=2.9cm, below of=A''] {$\cdot$};
\node (B''') [node distance=3.6cm, right of=A'''] {$\cdot$};
\node (a''') [node distance=1.8cm, right of=A'''] {};
\node () [node distance=0.15cm, above of=a'''] {$c_3$};
\node () [node distance=0.15cm, below of=a'''] {$\Rrightarrow$};

\node (c''') [node distance=1.5cm, right of=A'''] {};
\node (e''') [node distance=0.7cm, above of=c'''] {};
\node (f''') [node distance=0.7cm, below of=c'''] {};
\node (d''') [node distance=2.1cm, right of=A'''] {};
\node (g''') [node distance=0.7cm, above of=d'''] {};
\node (h''') [node distance=0.7cm, below of=d'''] {};

\draw[->, bend left=45] (A''') to node {$\text{id}_1$} (B''');
\draw[->, bend right=45] (A''') to node [swap] {$\text{id}_1$} (B''');
\draw[->, bend right=25] (e''') to node [swap] {$\text{id}_2$} (f''');
\draw[->, bend left=25] (g''') to node {$\text{id}_2$} (h''');

\node (P''') [node distance=3cm, right of=B'''] {$\cdot$};
\node (Q''') [node distance=3cm, right of=P'''] {$\cdot$};

\node (u''') [node distance=1cm, right of=P'''] {};
\node (i''') [node distance=1.5cm, right of=P'''] {};
\node (j''') [node distance=2cm, right of=P'''] {};
\node (n''') [node distance=0.55cm, above of=u'''] {};
\node (m''') [node distance=0.55cm, below of=u'''] {};
\node (k''') [node distance=0.65cm, above of=i'''] {};
\node (l''') [node distance=0.65cm, below of=i'''] {};
\node (w''') [node distance=0.55cm, above of=j'''] {};
\node (z''') [node distance=0.55cm, below of=j'''] {};
\node () [node distance=1.15cm, right of=P'''] {$\Rrightarrow$};
\node () [node distance=1.85cm, right of=P'''] {$\Rrightarrow$};
%\node () [node distance=0.45cm, left of=P'''] {};

\draw[->, bend left=45] (P''') to node {} (Q''');
\draw[->, bend right=45] (P''') to node [swap] {} (Q''');
\draw[->, bend right=35] (n''') to node [swap] {} (m''');
\draw[->] (k''') to node [swap] {} (l''');
\draw[->, bend left=35] (w''') to node {} (z''');

\node (x''') [node distance=0.5cm, right of=B'''] {};
\node (y''') [node distance=0.5cm, left of=P'''] {};
\draw[|->, dashed] (x''') to node {} (y''');

\end{tikzpicture}
}
\end{center}
\end{list}

An algebra for $\bs{W_4}$ is a weak 4-category. The 3-cell generators $i_3$, $h_3$, $v_3$ and $c_3$ provide 3-cell identities and binary composition of 3-cells along matching 0, 1, and 2-cell boundaries, respectively. The remaining forty-two 3-cell generators correspond to the kinds of coherence 3-cells in the tetracategories of \cite{AH}, which in turn match the coherence 3-cells present in tricategories \cite{NG}. The 3-cell generators for $\bs{W_4}$ can be found listed explicitly in \cite{RG1}.
\end{definition}

\begin{notation}
We denote by $\mathbf{Wk \ 4 \mhyphen Cat}_{str}$ the category $\bs{W_4} \mhyphen \mathbf{Alg}$ of algebras for $\bs{W_4}$. 
\end{notation}

In analogy with the lower dimensional cases, it is straighforward to construct a morphism $f:\bs{I_4} \longrightarrow \bs{W_4}$ of 4-globular operads which is full on the underlying 4-globular sets. It follows that $\bs{W_4}$ is indeed contractible. %The induced functor $\mathbf{Wk \ 4 \mhyphen Cat}_{str} \longrightarrow \mathbf{UWk \ 4 \mhyphen Cat}_{str}$ is just the inclusion functor corresponding to the choice of $f$.

\begin{remark} It is unknown whether the tetracategories of \cite{AH} satisfy the Coherence Theorem \ref{the coherence theorem}. 
If they do, then an algebra for $\bs{W_4}$ is precisely one of these tetracategories.  If not, then the algebras for $\bs{W_4}$ must be a stricter variation.
\end{remark}

We can now define, for $n=2,3$ and $4$, a subcategory of  $\mathbf{Cat \mhyphen GOp}_{\bs{n}}$ whose objects are globular operads for \textit{biased} $n$-categories.

\begin{definition}\label{W_n -> means BCat} An \textit{$n$-globular operad for biased $n$-categories} is an $n$-globular operad $\bs{G_n}$ for which there exists a morphism $\bs{W_n} \longrightarrow \bs{G_n}$ of $n$-globular operads which is full on the underlying $n$-globular sets.
\end{definition}

\begin{notation}\label{BCat} We denote by $\mathbf{BCat \mhyphen GOp}_{\bs{n}}$ the full subcategory of $\mathbf{Cat \mhyphen GOp}_{\bs{n}}$ whose objects are the $n$-globular operad for biased $n$-categories.
\end{notation}

\begin{remark}\label{BCat op are contractible}
Note that since $\mathbf{BCat \mhyphen GOp}_{\bs{n}}$ is a subcategory $\mathbf{Cat \mhyphen GOp}_{\bs{n}}$ every $n$-globular operad for biased $n$-categories is contractible.
\end{remark}

The presentations for the $n$-globular operads $\bs{W_n}$ for weak $n$-categories given in this section consist of a $k$-cell generator for each binary composition operation on $k$-cells and each kind of coherence $k$-cell present in a weak $n$-category ($k<n$). Presentations for $n$-globular operads for stricter varieties of $n$-category will require fewer $k$-cell generators since there are fewer coherence cells, but will also require some $k$-cell relations, each of will yield an axiom for $k$-cell composition.

\subsection{Semi-strict $n$-categories}\label{semi-strict section}

In this section we construct presentations for the $n$-globular operads for two theories of semi-strict $n$-category in dimensions $n \leqslant 4$. The first are $n$-categories with weak identities in low dimensions (namely, dimensions $\leqslant n-2$), and the second are $n$-categories with weak interchange laws. Note that for $n=2$, both of these are precisely strict 2-categories. 

\begin{definition}\label{E_4}
The 4-globular operad $\bs{E_4}$ for 4-categories with weak units in low dimensions is the  \textit{contractible} 4-globular operad with

\begin{list}{•}{}

\item a single 0-cell, the identity $\text{id}_0$; 

\item 1-cells consisting of the operadic composites of 1-cells $i_1$ and $h_1$ whose images under the underlying collection map are as in the presentation for $\bs{W_2}$, subject to the equality $h_1 \circ (\text{id}_1, h_1) = h_1 \circ (h_1, \text{id}_1)$;

\item 2-cells consisting of the operadic composites of 2-cells $i_2$, $h_2$, $v_2$, $l_2$, $l'_2$, $r_2$ and $r'_2$ whose images under the underlying collection map are as in the presentation for $\bs{W_3}$, subject to the following equalities

\begin{enumerate}[i)]
\item $h_2 \circ (\text{id}_2, h_2) = h_2 \circ (h_2, \text{id}_2)$
\item $v_2 \circ (\text{id}_2, v_2) = v_2 \circ (v_2, \text{id}_2)$
\item $h_2 \circ (v_2,v_2) = v_2 \circ (h_2,h_2)$
\end{enumerate}
and;

\item 3-cells consisting of the operadic composites of thirty-two 3-cells, including 3-cells $i_3$, $h_3$, $v_3$ and $c_3$, whose images under the underlying collection map are as in the presentation for $\bs{W_4}$, subject to the equalities listed below.

\begin{enumerate}[i)]

\item $c_3 \circ (\text{id}_3,i_3) = \text{id}_3$

\item $c_3 \circ (i_3, \text{id}_3) = \text{id}_3$

\item $h_3 \circ (\text{id}_3, h_3) = h_3 \circ (h_3,\text{id}_3)$

\item $v_3 \circ (\text{id}_3, v_3) = v_3 \circ (v_3, \text{id}_3)$

\item $c_3 \circ (\text{id}_3, c_3) = c_3 \circ (c_3, \text{id}_3)$

\item $h_3 \circ (v_3,v_3) = v_3 \circ (h_3,h_3)$

\item $v_3 \circ (c_3,c_3) = c_3 \circ (v_3,v_3)$ 

\item $c_3 \circ (h_3,h_3) = h_3 \circ (c_3,c_3)$

\item $h_3 \circ (i_3,i_3) = i_3 \circ (h_2)$

\item $v_3 \circ (i_3,i_3) = i_3 \circ (v_2)$

\end{enumerate}

\end{list}

\end{definition}
 
An algebra $\theta:\bs{A_{E_4}} \longrightarrow \bs{A}$ for $\bs{E_4}$ is a 4-category with weak units in dimensions $\leqslant 2$ whose underlying 4-globular set is $\bs{A}$. The single 1-cell relation, i.e., the equality $h_1 \circ (\text{id}_1, h_1) = h_1 \circ (h_1, \text{id}_1)$, yields an associativity axiom for 1-cell composition; given a 1-pasting diagram $(a,b,c)$ in $\bs{A}$ we have 
$$a(bc) = \big(h_1 \circ (\text{id}_1, h_1)\big) (a,b,c) = \big(h_1 \circ (h_1, \text{id}_1)\big) (a,b,c)=(ab)c.$$
Thus, the 2-cell generators $a_2$ and $a_2'$ for associativity coherence 2-cells that appear in the presentation for $\bs{W_n}$ do not appear here, but every other 2-cell generator does. The 2-cell relations yield associativity axioms for horizontal and vertical composition of 2-cells, respectively, and an interchange law.
The twenty-eight 3-cell generators not listed above provide coherence 3-cells related to the identity 1- and 2-cells, and can be found listed explicitly in \cite{RG1}. The first two 3-cell relations yield left and right unit axioms, respectively, for composition of 3-cells along a 2-cell boundary. The next three yield associativity axioms (one for each binary composition operation on 3-cells). The three that follow yield interchange laws (one for each pair of binary composition operations on 3-cells). The final two 3-cell relations yield axioms stating that the composite of two 3-cell identities along a 0 or a 1-cell boundary, respectively, is an identity 2-cell.

\begin{definition} \label{E_3}
The 3-globular operad $\bs{E_3}$ for 3-categories with weak units in low dimensions is the \textit{contractible} 3-globular operad with

\begin{list}{$\bullet$}{}

\item the same 0 and 1-cells as $\bs{E_4}$; and

\item the same 2-cell generators as $\bs{E_4}$, subject to the same equalities, as well as the following additional equalities.

\begin{enumerate}[i)]

\item $v_2 \circ (\text{id}_2,i_2) = \text{id}_2$

\item $v_2 \circ (i_2, \text{id}_2) = \text{id}_2$

\item $h_2 \circ (i_2,i_2) = i_2 \circ (h_1)$

\end{enumerate}
\end{list}
\end{definition}

An algebra for $\bs{E_3}$ on a 3-globular set $\bs{A}$ is the same as an algebra for $\bs{E_4}$ in dimensions 0 and 1. However, the 2-cells of $\bs{E_3}$ satisfy three extra axioms imposed by the additional 2-cell relations. The first two yield left and right unit axioms, respectively, for vertical 2-cell composition. The third yields an axiom stating that the horizontal composite of two 2-cell identities is an identity 2-cell.

\begin{notation}
We denote by $\mathbf{WkUnit} \ n \mhyphen \mathbf{Cat}_{str}$ the category $\bs{E_n} \mhyphen \mathbf{Alg}$ of algebras for $\bs{E_n}$. 
\end{notation}

By Lemmas \ref{contractible n-morphisms} and \ref{map is determined at k-cell generators}, a morphism $f:\bs{W_n} \longrightarrow \bs{E_n}$ of $n$-globular operads is completely determined by its value on the $k$-cell generators of $\bs{W_n}$ for all $k<n$. Thus we can construct a canonical morphism $f:\bs{W_3} \longrightarrow \bs{E_3}$ of 3-globular operads by sending the $k$-cell generators in $\bs{W_3}$ with a corresponding 3-cell generator in $\bs{E_3}$ to their counterparts, and setting $f(a_2) = i_2 \circ (h_1 \circ (id_1,h_1)) =  i_2 \circ (h_1 \circ (h_1, id_1)) = f(a_2')$. There is an analogous morphism $\bs{W_4} \longrightarrow \bs{E_4}$ of 4-globular operads. These morphism are clearly full on the underlying $n$-globular sets by inspection so, for $n=3$ and $n=4$, $\bs{E_n}$ is an object in category $\mathbf{BCat \mhyphen GOp}_{\bs{n}}$ of $n$-globular operad for biased weak $n$-categories, and is therefore contractible; see Remark \ref{BCat op are contractible}. The induced functor $f:\bs{E_n} \mhyphen \mathbf{Alg} \longrightarrow \bs{W_n} \mhyphen \mathbf{Alg}$ is the canonical inclusion functor
\vspace{1.5mm}
\begin{center}
\begin{tikzpicture}[node distance=4.2cm, auto]

\node (A) {$\mathbf{WkUnit} \ n \mhyphen \mathbf{Cat}_{str}$};
\node (B) [right of=A] {$\mathbf{Wk} \ n \mhyphen \mathbf{Cat}_{str}$};

\draw[right hook->] (A) to node {} (B);

\end{tikzpicture}
\end{center}
and the corresponding natural transformation (Definition \ref{induced nat trans})
\begin{center}
\begin{tikzpicture}[node distance=3.6cm, auto]

\node (A) {$\mathbf{GSet}_{\bs{n}}$};
\node (B) [right of=A] {$\mathbf{Wk} \ n \mhyphen \mathbf{Cat}_{str}$};
\node (a) [node distance=1.8cm, right of=A] {$\Downarrow$};
\node (C) [node distance=0.9cm, below of=a] {$\mathbf{WkUnit} \ n \mhyphen \mathbf{Cat}_{str}$};

\draw[->, bend left=40] (A) to node {} (B);
\draw[->, bend right=10] (A) to node [swap] {} (C);
\draw[->, bend right=10] (C) to node [swap] {} (B);

\end{tikzpicture}
\end{center}
is the one whose component at an $n$-globular set $\bs{A}$ is the canonical strict 4-functor $\bs{A_{W_n}} \longrightarrow \bs{A_{E_n}}$ from the free weak $n$-category on $\bs{A}$ to the free $n$-category with weak units in low dimensions on $\bs{A}$.

We now construct presentations for the $n$-globular operads for $n$-categories with weak interchange laws. Note that 3-categories with weak interchange laws are not the same as Gray categories. Gray categories are strict 3-categories with no direct horizontal composite for 2-cells. On the other hand, 3-categories with weak interchange laws have a direct horizontal composite for 2-cells, but the 2-dimensional interchange law is weak. See \cite{RG1} for a presentation for the 3-globular operad for Gray categories, and a comparsion between this operad and the one for 3-categories with weak interchange laws. 

\begin{definition} The 4-globular operad $\bs{H_4}$ for 4-categories with weak interchange laws is the \textit{contractible}  4-globular operad with

\begin{list}{•}{}

\item a single 0-cell, the identity $\text{id}_0$; 

\item 1-cells consisting of the operadic composites of the 1-cells $i_1$ and $h_1$ whose images under the underlying collection map are as in the presentation for $\bs{W_2}$, subject to the following equalities;

\begin{enumerate}[i)]

\item $h_1 \circ (\text{id}_1, i_1) = \text{id}_1$

\item $h_1 \circ (i_1, \text{id}_1) = \text{id}_1$

\item $h_1 \circ (\text{id}_1, h_1) = h_1 \circ (h_1, \text{id}_1)$

\end{enumerate}

\item 2-cells consisting of the operadic composites of 2-cells $i_2$, $h_2$ and $v_2$ whose images under the underlying collection map are as in the presentation for $\bs{W_3}$, subject to the following equalities;
\begin{enumerate}[i)]

\item $h_2 \circ (\text{id}_2, i_2 \circ (i_1)) = \text{id}_2$
\item $h_2 \circ (i_2 \circ (i_1), \text{id}_2) = \text{id}_2$
\item $v_2 \circ (\text{id}_2,i_2) = \text{id}_2$
\item $v_2 \circ (i_2, \text{id}_2) = \text{id}_2$
\item $h_2 \circ (\text{id}_2, h_2) = h_2 \circ (h_2, \text{id}_2)$
\item $v_2 \circ (\text{id}_2, v_2) = v_2 \circ (v_2, \text{id}_2)$
\item $h_2 \circ (i_2,i_2) = i_2 \circ (h_1)$

\end{enumerate}
and

\item 3-cells consisting of the operadic composites of 3-cells $i_3$, $h_3$, $v_3$ and $c_3$ whose images under the underlying collection map are as in the presentation for $\bs{W_4}$, and 3-cells $s_3$ and $s'_3$ whose images under the underlying collection map are,
\begin{center}
\resizebox{0.67\textwidth}{!}{
\begin{tikzpicture}[node distance=2cm, auto]

\node (A*) {$\cdot$};
\node (B*) [node distance=3.6cm, right of=A*] {$\cdot$};
\node (a*) [node distance=1.8cm, right of=A*] {};
\node () [node distance=0.15cm, above of=a*] {$s_3$};
\node () [node distance=0.15cm, below of=a*] {$\Rrightarrow$};

\node (c*) [node distance=1.5cm, right of=A*] {};
\node (e*) [node distance=0.7cm, above of=c*] {};
\node (f*) [node distance=0.7cm, below of=c*] {};
\node (d*) [node distance=2.1cm, right of=A*] {};
\node (g*) [node distance=0.7cm, above of=d*] {};
\node (h*) [node distance=0.7cm, below of=d*] {};

\draw[->, bend left=45] (A*) to node {$h_1$} (B*);
\draw[->, bend right=45] (A*) to node [swap] {$h_1$} (B*);
\draw[->, bend right=25] (e*) to node [swap] {$x$} (f*);
\draw[->, bend left=25] (g*) to node {$y$} (h*);

\node (P*) [node distance=3cm, right of=B*] {$\cdot$};
\node (Q*) [node distance=2.8cm, right of=P*] {$\cdot$};
\node (R*) [node distance=2.8cm, right of=Q*] {$\cdot$};

\node (i*) [node distance=1.4cm, right of=P*] {};
\node () [node distance=0.45cm, above of=i*] {$\Downarrow$};
\node () [node distance=0.45cm, below of=i*] {$\Downarrow$};
\node (j*) [node distance=1.4cm, right of=Q*] {};
\node () [node distance=0.45cm, above of=j*] {$\Downarrow$};
\node () [node distance=0.45cm, below of=j*] {$\Downarrow$};

\draw[->, bend left=65] (P*) to node {} (Q*);
\draw[->] (P*) to node {} (Q*);
\draw[->, bend right=65] (P*) to node [swap] {} (Q*);
\draw[->, bend left=65] (Q*) to node {} (R*);
\draw[->] (Q*) to node {} (R*);
\draw[->, bend right=65] (Q*) to node [swap] {} (R*);

\node (x*) [node distance=0.5cm, right of=B*] {};
\node (y*) [node distance=0.5cm, left of=P*] {};
\draw[|->, dashed] (x*) to node {} (y*);

\node (A**) [node distance=3cm, below of=A*] {$\cdot$};
\node (B**) [node distance=3.6cm, right of=A**] {$\cdot$};
\node (a**) [node distance=1.8cm, right of=A**] {};
\node () [node distance=0.15cm, above of=a**] {$s'_3$};
\node () [node distance=0.15cm, below of=a**] {$\Rrightarrow$};

\node (c**) [node distance=1.5cm, right of=A**] {};
\node (e**) [node distance=0.7cm, above of=c**] {};
\node (f**) [node distance=0.7cm, below of=c**] {};
\node (d**) [node distance=2.1cm, right of=A**] {};
\node (g**) [node distance=0.7cm, above of=d**] {};
\node (h**) [node distance=0.7cm, below of=d**] {};

\draw[->, bend left=45] (A**) to node {$h_1$} (B**);
\draw[->, bend right=45] (A**) to node [swap] {$h_1$} (B**);
\draw[->, bend right=25] (e**) to node [swap] {$y$} (f**);
\draw[->, bend left=25] (g**) to node {$x$} (h**);

\node (P**) [node distance=3cm, right of=B**] {$\cdot$};
\node (Q**) [node distance=2.8cm, right of=P**] {$\cdot$};
\node (R**) [node distance=2.8cm, right of=Q**] {$\cdot$};

\node (i**) [node distance=1.4cm, right of=P**] {};
\node () [node distance=0.45cm, above of=i**] {$\Downarrow$};
\node () [node distance=0.45cm, below of=i**] {$\Downarrow$};
\node (j**) [node distance=1.4cm, right of=Q**] {};
\node () [node distance=0.45cm, above of=j**] {$\Downarrow$};
\node () [node distance=0.45cm, below of=j**] {$\Downarrow$};

\draw[->, bend left=65] (P**) to node {} (Q**);
\draw[->] (P**) to node {} (Q**);
\draw[->, bend right=65] (P**) to node [swap] {} (Q**);
\draw[->, bend left=65] (Q**) to node {} (R**);
\draw[->] (Q**) to node {} (R**);
\draw[->, bend right=65] (Q**) to node [swap] {} (R**);

\node (x**) [node distance=0.5cm, right of=B**] {};
\node (y**) [node distance=0.5cm, left of=P**] {};
\draw[|->, dashed] (x**) to node {} (y**);

\end{tikzpicture}
}
\end{center}
where $x=h_2 \circ (v_2,v_2)$ and $y=v_2 \circ (h_2,h_2)$, subject to the equalities below.

\vspace{2mm}

\begin{enumerate}[i)]

\item $h_3 \circ \big(\text{id}_3, i_3 \circ (i_2 \circ (i_1))\big) = \text{id}_3$
\item $h_3 \circ \big(i_3 \circ (i_2 \circ (i_1)), \text{id}_3\big) = \text{id}_3$

\item $v_3 \circ (\text{id}_3, i_3 \circ (i_2)) = \text{id}_3$
\item $v_3 \circ (i_3 \circ (i_2), \text{id}_3) = \text{id}_3$

\item $c_3 \circ (\text{id}_3, i_3) = \text{id}_3$
\item $c_3 \circ (i_3, \text{id}_3) = \text{id}_3$

\item $h_3 \circ (\text{id}_3, h_3) = h_3 \circ (h_3,\text{id}_3)$
\item $v_3 \circ (\text{id}_3, v_3) = v_3 \circ (v_3, \text{id}_3)$
\item $c_3 \circ (\text{id}_3, c_3) = c_3 \circ (c_3, \text{id}_3)$

\item $h_3 \circ (i_3, i_3) = i_3 \circ (h_2)$ 
\item $v_3 \circ (i_3, i_3) = i_3 \circ (v_2)$ 

\end{enumerate}

\end{list}
\end{definition}

An algebra for $\bs{A_{H_4}} \longrightarrow \bs{A}$ for $\bs{H_4}$ is a 4-category with weak interchange laws with underlying 4-globular set $\bs{A}$. Note that in dimensions $\leqslant 2$, our presentation for $\bs{H_4}$ is the same as the presentation for the 2-globular operad $\bs{T_2}$ for strict 2-categories given in Example \ref{pres for T_2}, with the exception of a single relation; $\bs{H_4}$ does not have the 2-cell relation yielding a strict interchange law. The 3-cell generators $s_3$ and $s_3$ provide interchange coherence 3-cells and their weak inverses, respectively. The 3-cell relations yield unit axioms for composition of 3-cells along matching 0, 1, and 2-cell boundaries, three associativity axioms, and axioms stating the composite of two identity 3-cells along a 0 or 1-cell is another identity 3-cell.

\begin{definition}\label{H_3}
The 3-globular operad $\bs{H_3}$ for 3-categories with weak interchange laws is the \textit{contractible} 3-globular operad with the same 0, 1 and 2-cells as $\bs{H_4}$.
\end{definition}

\begin{notation}
We denote by $\mathbf{WkInt} \ n \mhyphen \mathbf{Cat}_{str}$ the category $\bs{H_n} \mhyphen \mathbf{Alg}$ of algebras for $\bs{H_n}$. 
\end{notation}

For $n=3$ and $n=4$ there is a canonical morphism $\bs{W_n} \longrightarrow \bs{H_n}$ of $n$-globular operads given by sending the $k$-cell generators of $\bs{W_n}$ with corresponding $k$-cell generators in $\bs{H_n}$ to their counterparts. There is then a unique choice of image in $\bs{H_n}$ for the remaining $k$-cell generators of $\bs{W_n}$. For example, the 2-cell generator $a_2$ of $\bs{W_n}$ must go to the 2-cell $i_2 \circ ( h_1 \circ (\text{id}_1, h_1)) = i_2 \circ ( h_1 \circ (h_1, \text{id}_1))$ of $\bs{H_n}$. This morphism is clearly full on the underlying 3-globular sets by inspection, so $\bs{H_n}$ is contractible. The induced functor $\bs{H_n} \mhyphen \mathbf{Alg} \longrightarrow \bs{W_n} \mhyphen \mathbf{Alg}$ is the canonical inclusion functor
\begin{center}
\begin{tikzpicture}[node distance=4cm, auto]

\node (A) {$\mathbf{WkInt} \ n \mhyphen \mathbf{Cat}_{str}$};
\node (B) [right of=A] {$\mathbf{Wk} \ n \mhyphen \mathbf{Cat}_{str}$,};

\draw[right hook->] (A) to node {} (B);

\end{tikzpicture}
\end{center}
\vspace{-1mm}
and the corresponding natural transformation
\begin{center}
\begin{tikzpicture}[node distance=3.6cm, auto]

\node (A) {$\mathbf{GSet}_{\bs{n}}$};
\node (B) [right of=A] {$\mathbf{Wk} \ n \mhyphen \mathbf{Cat}_{str}$};
\node (a) [node distance=1.8cm, right of=A] {$\Downarrow$};
\node (C) [node distance=0.9cm, below of=a] {$\mathbf{WkInt} \ n \mhyphen \mathbf{Cat}_{str}$};

\draw[->, bend left=40] (A) to node {} (B);
\draw[->, bend right=10] (A) to node [swap] {} (C);
\draw[->, bend right=10] (C) to node [swap] {} (B);

\end{tikzpicture}
\end{center}
is the natural transformation whose component at an $n$-globular set $\bs{A}$ is the canonical strict $n$-functor $\bs{A_{W_n}} \longrightarrow \bs{A_{H_n}}$ from the free weak $n$-category on $\bs{A}$ to the free $n$-category with weak interchange laws on $\bs{A}$.

\end{document}